\date{Octobre 2017}
\title{La th\'eorie de Hodge des bimodules de Soergel}
\author{Simon RICHE}
\address{Laboratoire de Math\'ematiques Blaise Pascal\\
UMR CNRS 6620\\
Universit\'e Clermont Auvergne\\ 
Campus Universitaire des C\'ezeaux\\
3, place Vasarely\\ 
TSA 60026 -- CS 60026\\ 
F--63178 Aubi\`ere Cedex}
\email{simon.riche@uca.fr}
\newcommand{\frg}{\mathfrak{g}}
\newcommand{\frb}{\mathfrak{b}}
\newcommand{\frh}{\mathfrak{h}}
\newcommand{\Phis}{\Phi_{\mathrm{s}}}
\newcommand{\cH}{\mathcal{H}}
\newcommand{\uH}{\underline{H}}
\newcommand{\cO}{\mathcal{O}}
\newcommand{\B}{\mathsf{B}}
\newcommand{\BS}{\mathsf{BS}}
\newcommand{\scB}{\mathscr{B}}
\newcommand{\uw}{\underline{w}}
\newcommand{\sm}{\mathsf{m}}
\newcommand{\bG}{\mathbf{G}}
\newcommand{\bB}{\mathbf{B}}
\newcommand{\R}{\mathbb{R}}
\newcommand{\C}{\mathbb{C}}
\newcommand{\Z}{\mathbb{Z}}
\newcommand{\D}{\mathbb{D}}
\newcommand{\id}{\mathrm{id}}
\newcommand{\vv}{\mathsf{v}}
\DeclareMathOperator{\Hom}{Hom}
\DeclareMathOperator{\End}{End}
\newcommand{\Kb}{K^{\mathrm{b}}}
\newcommand{\Db}{D^{\mathrm{b}}}
\newcommand{\simto}{\xrightarrow{\sim}}
\begin{document}
\maketitle

\noindent{\bf INTRODUCTION}


\subsection{Bimodules de Soergel}
\label{ss:intro-bimodules}

Les \emph{bimodules de Soergel} sont certains bimodules gradu\'es sur des alg\`ebres de polyn\^omes, associ\'es \`a un syst\`eme de Coxeter $(W,S)$ et une repr\'esentation $V$ de $W$.

Ces bimodules ont \'et\'e introduits au d\'ebut des ann\'ees 90 dans les travaux de Soergel~\cite{soergel-kat, soergel-HC}, dans le cas particulier o\`u $W$ est le groupe de Weyl d'un groupe alg\'ebrique semisimple complexe $G$, et $V$ est l'alg\`ebre de Lie d'un tore maximal de $G$. Dans ce cas Soergel montre que cette cat\'egorie est \'equivalente \`a la cat\'egorie des complexes semisimples $B$-\'equivariants (pour la t-structure perverse) sur la vari\'et\'e de drapeaux $G/B$ de $G$ (o\`u $B \subset G$ est un sous-groupe de Borel) ; on en d\'eduit que
\begin{enumerate}
\item
\label{eqn:proprietes-Soergel-1}
l'anneau de Grothendieck scind\'e de cette cat\'egorie est canoniquement isomorphe \`a l'alg\`ebre de (Iwahori--)Hecke $\cH_{(W,S)}$ de $(W,S)$ ;
\item
\label{eqn:proprietes-Soergel-2}
les bimodules de Soergel ind\'ecomposables (\`a d\'ecalage pr\`es de la graduation) sont param\'etr\'es par $W$ (on notera $\B_w$ le bimodule associ\'e \`a $w \in W$) ;
\item
\label{eqn:proprietes-Soergel-3}
les classes de ces objets dans l'anneau de Grothendieck, identifi\'e \`a $\cH_{(W,S)}$, forment la \emph{base de Kazhdan--Lusztig} de $\cH_{(W,S)}$.
\end{enumerate}
En utilisant ces propri\'et\'es, Soergel obtient alors une nouvelle preuve de la conjecture de Kazhdan--Lusztig~\cite{kl} sur les multiplicit\'es dans la cat\'egorie $\mathcal{O}$ de l'alg\`ebre de Lie semisimple complexe duale de $G$ au sens de Langlands, prouv\'ee pr\'ec\'edemment et par d'autres m\'ethodes\footnote{On renvoie \`a~\cite{springer} pour une pr\'esentation de ces travaux.} par Be{\u\i}linson--Bernstein~\cite{bb} et Brylinsky--Kashiwara~\cite{bk}.

Dans un travail ult\'erieur~\cite{soergel-bim} (apr\`es des premiers r\'esultats obtenus dans~\cite{soergel-HC}), Soergel a d\'efini\footnote{Dans~\cite{soergel-bim}, Soergel utilise le terme ``speziellen Bimoduln'' pour ces objets. Les premi\`eres occurences du terme ``bimodules de Soergel'' dans la litt\'erature semblent \^etre dans~\cite{rouquier-preprint} et~\cite{khovanov}.} et \'etudi\'e ces bimodules pour un syst\`eme de Coxeter arbitraire et une repr\'esentation de $W$ satisfaisant une certaine condition technique\footnote{Wolfgang Soergel m'a fait savoir qu'il tenait \`a remercier particuli\`erement Patrick Polo pour ses contributions dans la gen\`ese de l'article~\cite{soergel-bim}.}. Dans ce cadre il n'existe pas de ``g\'eom\'etrie'' associ\'ee, de sorte que les arguments utilis\'es sont n\'ecessairement alg\'ebriques. Il montre dans cet article que les propri\'et\'es~\eqref{eqn:proprietes-Soergel-1} et~\eqref{eqn:proprietes-Soergel-2} ci-dessus sont vraies dans cette g\'en\'eralit\'e. Par contre, la propri\'et\'e~\eqref{eqn:proprietes-Soergel-3} s'av\`ere \^etre beaucoup plus subtile, et n'est \'enonc\'ee que comme une conjecture (sous l'hypoth\`ese que le corps de base est de caract\'eristique $0$). Soergel remarque que cette conjecture impliquerait la positivit\'e des polyn\^omes de Kazhdan--Lusztig (comme conjectur\'e par Kazhdan--Lusztig~\cite{kl}), c'est-\`a-dire une solution \`a l'un des probl\`emes centraux dans la combinatoire des groupes de Coxeter.

Depuis ces travaux fondateurs, les bimodules de Soergel (sous diff\'erentes formes) se sont r\'ev\'el\'es \^etre des outils extr\`emement utiles en th\'eorie des repr\'esentations (voir~\cite{soergel-HC, soergel-andersen, ww, dodd, by, rw} pour quelques exemples), car ils permettent souvent de faire un lien avec la g\'eom\'etrie d'une vari\'et\'e de drapeaux appropri\'ee. Mais dans tous les cas la preuve de certaines de leurs propri\'et\'es sort du cadre alg\'ebrique de leur d\'efinition, et repose sur la g\'eom\'etrie.

\subsection{Les r\'esultats d'Elias--Williamson}
\label{ss:intro-resultats}

L'objet principal de cet expos\'e est d'expliquer la preuve, due \`a Elias--William\-son~\cite{ew1}, de la conjecture de Soergel \'evoqu\'ee au~\S\ref{ss:intro-bimodules}. Soergel remarque dans~\cite{soergel-bim} que, si on cherche \`a d\'emontrer la conjecture par r\'ecurrence sur la longueur de l'\'el\'ement de $W$ consid\'er\'e, on doit v\'erifier que certaines formes bilin\'eaires sur des espaces de morphismes sont non d\'eg\'en\'er\'ees. L'intuition fondamentale de Elias--Williamson est que cette propri\'et\'e n'est que la partie \'emerg\'ee d'un iceberg beaucoup plus profond : les bimodules de Soergel poss\`edent toute une ``th\'eorie de Hodge'' qu'il faut construire en m\^eme temps qu'on d\'emontre la conjecture de Soergel.

Plus pr\'ecis\'ement, Elias--Williamson se placent dans le cadre d'une repr\'esentation $V$ \emph{r\'eelle} de $(W,S)$ qui poss\`ede la propri\'et\'e technique de Soergel et une autre condition de ``positivit\'e''\footnote{La preuve d'Elias--Williamson repose sur l'utilisation de l'ordre sur $\mathbb{R}$, et donc ne s'applique pas \`a d'autres corps de coefficients. Les sp\'ecialistes semblent douter de la v\'eracit\'e de la conjecture de Soergel sur d'autres corps de caract\'eristique $0$. (Divers contre-exemples sont connus en caract\'eristique positive.)}. (Chaque groupe de Coxeter poss\`ede une repr\'esentation de ce type.) On pose $R:=\mathrm{Sym}(V^*)$, qu'on consid\`ere comme un anneau gradu\'e engendr\'e en degr\'e $2$. Ils consid\`erent alors un certain \'element $\rho \in V^*=R^2$ et remarquent que si $w \in W$ et si la conjecture de Soergel est connue pour $w$ (c'est-\`a-dire si la classe dans le groupe de Grothendieck scind\'e du bimodule ind\'ecomposable $\B_w$ associ\'e \`a $w$ est l'\'el\'ement de la base de Kazhdan--Lusztig associ\'e \`a $w$) alors $\B_w$ poss\`ede une forme bilin\'eaire sym\'etrique ``invariante'' $\langle -, - \rangle_{\B_w} : \B_w \times \B_w \to R$ canonique, unique \`a un scalaire dans $\R_{>0}$ pr\`es. On notera de m\^eme la forme bilin\'eaire sym\'etrique $(\B_w \otimes_R \R) \times (\B_w \otimes_R \R) \to \R$ induite par $\langle -, - \rangle_{\B_w}$. Ils d\'emontrent simultan\'ement, par r\'ecurrence sur la longueur de $w$, que :
\begin{enumerate}
\item
\label{it:intro-prop-1}
la conjecture de Soergel est vraie pour $w$;
\item
\label{it:intro-prop-2}
$\B_w \otimes_R \R$ v\'erifie le th\'eor\`eme de Lefschetz difficile, au sens o\`u pour tout $i \geq 0$ la multiplication par $\rho^i$ induit un isomorphisme $(\B_w \otimes_R \R)^{-i} \simto (\B_w \otimes_R \R)^i$ entre les parties de degr\'e $-i$ et $i$;
\item
\label{it:intro-prop-3}
la paire $(\B_w \otimes_R \R, \langle -, - \rangle_{\B_w})$ v\'erifie les relations de Hodge--Riemann, au sens o\`u pour tout $i \geq 0$ la restriction de la forme $(x,y) \mapsto \langle x, \rho^i \cdot y \rangle_{\B_w}$ sur $(\B_w \otimes_R \R)^{-i}$ aux \'el\'ements primitifs (c'est-\`a-dire annul\'es par $\rho^{i+1}$) est $(-1)^{\frac{\ell(w)-i}{2}}$-d\'efinie\footnote{Ici on dit qu'une forme bilin\'eaire sym\'etrique est $(+1)$-d\'efinie, resp.~$(-1)$-d\'efinie, si elle est d\'efinie positive, resp.~d\'efinie n\'egative.} (o\`u $\ell$ d\'esigne la fonction de longueur sur $W$).
\end{enumerate}

Notons que la propri\'et\'e~\eqref{it:intro-prop-3} est l'\'etape cruciale pour d\'emontrer~\eqref{it:intro-prop-1}, mais que d'un autre c\^ot\'e cette propri\'et\'e n'a un sens pr\'ecis qu'une fois que~\eqref{it:intro-prop-1} est d\'emontr\'ee (pour que la forme bilin\'eaire $\langle -, - \rangle_{\B_w}$ soit fix\'ee). Les preuves de ces deux propri\'et\'es sont donc n\'ecessairement imbriqu\'ees. La propri\'et\'e~\eqref{it:intro-prop-2} est elle impliqu\'ee par~\eqref{it:intro-prop-3}, mais joue un r\^ole crucial dans la r\'ecurrence.

Dans le cas o\`u $W$ est le groupe de Weyl d'un groupe alg\'ebrique complexe $G$, et o\`u $V$ est l'alg\`ebre de Lie d'un tore maximal, $\B_w \otimes_R \R$ s'identifie \`a la cohomologie d'intersection de la vari\'et\'e de Schubert associ\'ee \`a $W$. Dans ce cas, les propri\'et\'es~\eqref{it:intro-prop-2} et~\eqref{it:intro-prop-3} d\'ecoulent du th\'eor\`eme de Lefschetz difficile et des relations de Hodge--Riemann ``classiques'', appliqu\'es \`a ce cas particulier. On renvoie \`a~\cite{ew-survey, williamson-survey} pour plus de d\'etails.

\subsection{Structure de la preuve}
\label{ss:structure-preuve}

La structure de la preuve de~\cite{ew1} est inspir\'ee par la preuve du th\'eor\`eme de d\'ecomposition donn\'ee r\'ecemment par de Cataldo--Migliorini. (Voir~\cite{dcm, williamson-bourbaki} pour des pr\'esentations de cette preuve. Notons cependant que la ``traduction'' de ces id\'ees dans le langage alg\'ebrique des bimodules de Soergel est hautement non triviale !) On a essay\'e de ne pr\'esenter que les parties indispensables de cette preuve, quitte \`a omettre certains r\'esultats interm\'ediaires int\'eressants mais qui peuvent \^etre contourn\'es (voir par exemple la Remarque~\ref{rema:positivite-inverse}).

Comme expliqu\'e ci-dessus, la preuve proc\`ede par r\'ecurrence sur la longueur de l'\'el\'ement consid\'er\'e. On fixe donc $w \in W \smallsetminus \{e\}$, on \'ecrit $w=ys$ avec $s \in S$ et $\ell(y)<\ell(w)$, et on suppose toutes les propri\'et\'es voulues pour les \'el\'ements de longueur $\leq \ell(y)$. Comme sugg\'er\'e au~\S\ref{ss:intro-resultats}, pour d\'emontrer la conjecture de Soergel pour $w$ il faut v\'erifier que certaines formes bilin\'eaires sur des espaces $\Hom_{\scB}(\B_z, \B_y \otimes_R \B_s)$ (o\`u $\ell(z) \leq \ell(y)$) sont non d\'eg\'en\'er\'ees. Pour ce faire, Elias--Williamson remarquent que $\Hom_{\scB}(\B_z, \B_y \otimes_R \B_s)$ s'injecte (par une isom\'etrie pour des formes bilin\'eaires appropri\'ees) dans le sous-espace des \'el\'ements primitifs dans $\bigl( (\B_y \otimes_R \B_s) \otimes_R \R \bigr)^{-\ell(z)}$. Puisque la restriction \`a un sous-espace d'une forme bilin\'eaire sym\'etrique d\'efinie positive ou n\'egative est non d\'eg\'en\'er\'ee, ceci ram\`ene la preuve de la conjecture de Soergel pour $w$ \`a un r\'esultat de type ``Hodge--Riemann'' pour $\B_y \otimes_R \B_s$. Il est par ailleurs facile de voir que le th\'eor\`eme de Lefschetz difficile et les relations de Hodge--Riemann pour $\B_w$ d\'ecoulent \'egalement de ce r\'esultat.

Pour d\'emontrer ce r\'esultat, Elias--Williamson consid\`erent l'op\'erateur de Lefschetz ``d\'eform\'e''
\[
L^{y,s}_\zeta := \rho \cdot \id_{\B_y \otimes \B_s} + \id_{\B_y} \otimes (\zeta \rho \cdot \id_{\B_s}) : \B_y \otimes_R \B_s \to \B_y \otimes_R \B_s(2)
\]
pour $\zeta \in \R_{\geq 0}$. (Ici, $(2)$ d\'esigne le foncteur de d\'ecalage de la graduation par $2$ vers la gauche.)
Ils remarquent tout d'abord que les relations de Hodge--Riemann pour $\B_y$ impliquent que $L^{y,s}_\zeta$ v\'erifie le r\'esultat voulu si $\zeta \gg 0$. En utilisant le fait que ce r\'esultat peut s'\'enoncer en terme de la signature de certaines formes bilin\'eaires, et que la signature ne peut pas varier dans une famille continue de formes bilin\'eaires sym\'etriques non d\'eg\'en\'er\'ees, ceci ram\`ene la preuve du r\'esultat voulu \`a v\'erifier que $L^{y,s}_\zeta$ v\'erifie le th\'eor\`eme de Lefschetz difficile pour tout $\zeta \geq 0$.

Dans leur preuve de l'\'enonc\'e g\'eom\'etrique correspondant, de Cataldo--Migliorini utilisent le th\'eor\`eme de Lefschetz faible pour une section hyperplane. Cet \'enonc\'e n'a aucun analogue dans le monde des bimodules de Soergel, et Elias--Williamson utilisent \`a la place des diff\'erentielles apparaissant dans les ``complexes de Rouquier'', qui permettent de factoriser l'op\'erateur $\rho \cdot \id_{\B_y}$ en une compos\'ee $\B_y \to D(1) \to \B_y(2)$, o\`u $D$ est une somme de termes de la forme $\B_z$ avec $\ell(z) < \ell(y)$ qui v\'erifie une version appropri\'ee des relations de Hodge--Riemann pour une forme convenable. Notons que cet argument impose de consid\'erer \'egalement les relations de Hodge--Riemann pour l'op\'erateur $L_\zeta$ sur $\B_z \otimes_R \B_s$ (quand $\ell(zs)>\ell(z)$) comme une hypoth\`ese de r\'ecurrence, en plus des propri\'et\'es consid\'er\'ees au~\S\ref{ss:intro-resultats}.

\subsection{Applications}

La preuve de la conjecture de Soergel a deux applications tr\`es importantes, d\'ej\`a sugg\'er\'ees au~\S\ref{ss:intro-bimodules}. La premi\`ere est qu'elle permet de donner une preuve alg\'ebrique de la conjecture de Kazhdan--Lusztig concernant les multiplicit\'es des modules simples dans les modules de Verma d'une alg\`ebre de Lie semi-simple complexe.
La seconde application (sugg\'er\'ee par Soergel dans~\cite{soergel-bim}) est la preuve d'une autre conjecture de Kazhdan--Lusztig affirmant que les coefficients des \emph{polyn\^omes de Kazhdan--Lusztig} (des polyn\^omes apparaissant dans diverses formules de caract\`eres en th\'eorie de Lie) sont positifs ou nuls. Cette conjecture \'etait connue quand $W$ est le groupe de Weyl d'un groupe alg\'ebrique (ou plus g\'en\'eralement d'un groupe de Kac--Moody) ; mais dans le cas g\'en\'eral elle constituait un des probl\`emes centraux du domaine depuis sa formulation en 1979.

\subsection{Prolongements}

Du point de vue d'Elias--Williamson, les r\'esultats \'enonc\'es au~\S\ref{ss:intro-resultats} forment la th\'eorie de Hodge ``globale'' des bimodules de Soergel. Dans des articles ult\'erieurs~\cite{williamson, ew2}, ils ont d\'evelopp\'e des versions ``locale'' et ``relative'' de cette th\'eorie, qui ont \'egalement des applications importantes en th\'eorie de Lie. Pour des raisons de place, ces r\'esultats (et leurs applications) ne sont \'evoqu\'es que bri\`evement, \`a la fin de ces notes.

\subsection{Contenu des notes}

Dans la Partie~\ref{sec:bimodules}, on rappelle les r\'esultats de base sur les bimodules de Soergel, suivant~\cite{soergel-bim}. Dans la Partie~\ref{sec:global} on pr\'esente les r\'esultats principaux de~\cite{ew1} et leurs preuves. Dans la Partie~\ref{sec:applications} on expose les applications de ces r\'esultats. Enfin, dans la Partie~\ref{sec:variantes} on \'evoque bri\`evement les r\'esultats de~\cite{williamson} et~\cite{ew2}.

\subsection{Remerciements}

Je remercie Geordie Williamson pour de nombreuses discussions autour de ces travaux et de r\'esultats connexes, avant et pendant la pr\'eparation de cet expos\'e, ainsi que pour ses remarques sur une version pr\'eliminaire de ces notes. Je remercie \'egalement Wolfgang Soergel pour l'autorisation de reproduire sa preuve de la Proposition~\ref{prop:modules-Soergel} et pour ses remarques sur une version pr\'eliminaire de ce texte. Enfin, je remercie V.~Le Dret pour sa relecture tr\`es attentive.

Ce travail a b\'en\'efici\'e du soutien de la bourse~ANR-13-BS01-0001-01 de l'Agence Nationale de la Recherche. This project has received funding from the European Research Council (ERC) under the European Union's Horizon 2020 research and innovation programme (grant agreement No. 677147).

\section{Bimodules de Soergel}
\label{sec:bimodules}

\subsection{Syst\`emes de Coxeter et alg\`ebres de Hecke}
\label{ss:def-Coxeter}


Soit $W$ un groupe, et soit $S \subset W$ un sous-ensemble constitu\'e d'\'el\'ements d'ordre $2$, qu'on supposera fini. Pour tous $s,t \in S$ (non n\'ecessairement distincts), on note $m_{st} \in \Z_{\geq 1} \cup \{\infty\}$ l'ordre de $st$ dans $W$. On dit que $(W,S)$ est un \emph{syst\`eme de Coxeter} si le morphisme canonique du groupe de pr\'esentation
\[
\langle S \mid (st)^{m_{st}}=1 \text{ pour tous $s,t \in S$ tels que $m_{st} \neq \infty$} \rangle
\]
dans $W$ est un isomorphisme. (On dit alors parfois -- comme ici dans l'introduction -- que $W$ est un \emph{groupe de Coxeter}, mais on \'evitera cette terminologie impropre dans le reste de ces notes.) Notons que pour tout ensemble $S$ et tous \'el\'ements $m_{st} \in \Z_{\geq 1} \cup \{\infty\}$ tels que $m_{ss}=1$ et $m_{st}=m_{ts} \geq 2$ pour $s \neq t$, il existe un syst\`eme de Coxeter de g\'en\'erateurs $S$ et tel que $m_{st}$ est l'ordre de $st$ pour tous $s,t \in S$ ; voir par exemple~\cite[\S V.4.3]{bourbaki}.

Si $w \in W$, on appellera \emph{d\'ecomposition r\'eduite} de $w$ toute \'ecriture de $w$ en produit d'\'el\'ements de $S$ de longueur minimale. Cette longueur est appel\'ee la longueur de $w$, et not\'ee $\ell(w)$. L'ensemble $W$ est muni d'un ordre naturel appel\'e \emph{ordre de Bruhat}, qu'on notera $\leq$, et pour lequel la fonction $\ell : W \to \Z_{\geq 0}$ est strictement croissante.

Ces groupes apparaissent ``naturellement'' en th\'eorie de Lie de la fa{\c c}on suivante.
Soit $G$ un groupe alg\'ebrique r\'eductif connexe sur un corps alg\'ebriquement clos, soit $B \subset G$ un sous-groupe de Borel, et soit $T \subset B$ un tore maximal. Soit $W=N_G(T)/T$ le groupe de Weyl de $(G,T)$. Soit \'egalement $\Phi$ le syst\`eme de racines de $G$, soit $\Phi^+ \subset \Phi$ le syst\`eme positif constitu\'e des racines de $B$, et soit $\Phis$ la base de $\Phi$ associ\'ee. \`A toute racine $\alpha$ on peut associer une \emph{coracine} $\alpha^\vee \in X_*(T)$, et une \emph{r\'eflexion} $s_\alpha \in W$ telle que $s_\alpha$ agit sur $T$ par $s_\alpha(t) = t \cdot \alpha^\vee(\alpha(t)^{-1})$. Si $S = \{s_\alpha : \alpha \in \Phis\}$, alors $(W,S)$ est un syst\`eme de Coxeter. Si on consid\`ere une construction similaire plus g\'en\'eralement pour les \emph{groupes de Kac--Moody} associ\'es \`a des matrices de Cartan g\'en\'eralis\'ees (voir~\cite{tits, kumar}), on obtient de cette mani\`ere tous les groupes de Coxeter \emph{cristallographiques}, c'est-\`a-dire tels que $m_{st} \in \{2,3,4,6,\infty\}$ pour tous $s \neq t$.

Si $(W,S)$ est un syst\`eme de Coxeter, l'\emph{alg\`ebre de Hecke} associ\'ee est la $\Z[\vv,\vv^{-1}]$-alg\`ebre $\cH_{(W,S)}$ (o\`u $\vv$ est une ind\'etermin\'ee) engendr\'ee par des \'el\'ements $\{H_s : s \in S\}$ soumis aux relations suivantes :
\begin{itemize}
\item
\emph{relations quadratiques} : $(H_s)^2 = 1 + (\vv^{-1}-\vv) H_s$ pour $s \in S$;
\item
\emph{relations de tresse} : si $s, t \in S$ avec $s \neq t$ et $m_{st} \neq \infty$, alors
\[
\underbrace{H_s H_t \cdots}_{\text{$m_{st}$ termes}} = \underbrace{H_t H_s \cdots }_{\text{$m_{st}$ termes}}.
\]
\end{itemize}
Les relations de tresses et le ``lemme de Matsumoto'' (voir~\cite[\S IV.1.5, Proposition~5]{bourbaki}) assurent que si $w \in W$ et si $w=s_1 \cdots s_n$ est une expression r\'eduite, alors l'\'el\'ement $H_w := H_{s_1} \cdots H_{s_n}$ ne d\'epend que de $w$. De plus, les \'el\'ements $\{H_w : w \in W\}$ forment une $\Z[\vv,\vv^{-1}]$-base de $\cH_{(W,S)}$ (voir~\cite[\S IV.2, Exercice 23]{bourbaki}).

Les relations quadratiques montrent que les \'el\'ements $H_s$ sont inversibles dans $\cH_{(W,S)}$. Il en d\'ecoule que cette propri\'et\'e est vraie pour tous les \'el\'ements $H_w$, de sorte qu'on peut d\'efinir une involution d'anneaux $d$ de $\cH_{(W,S)}$ en posant
\[
d(\vv)=\vv^{-1} \quad \text{et} \quad d(H_w) = (H_{w^{-1}})^{-1}.
\]
Kazhdan--Lusztig ont construit dans~\cite{kl} une base remarquable de $\cH_{(W,S)}$ de la fa{\c c}on suivante :
pour tout $w \in W$, il existe un unique \'el\'ement $\uH_w$ qui v\'erifie
\[
d(\uH_w) = \uH_w \quad \text{et} \quad \uH_w \in H_w + \bigoplus_{y \in W} \vv\Z[\vv] \cdot H_y ;
\]
alors $\{\uH_w : w \in W\}$ forme une base de $\cH_{(W,S)}$. (Voir \'egalement~\cite[Theorem~2.1]{soergel} pour une preuve plus simple de ce r\'esultat).

Les \emph{polyn\^omes de Kazhdan--Lusztig} sont les \'el\'ements $\{h_{y,w} : y,w \in W\}$ de $\Z[\vv]$ d\'etermin\'es par
\[
\uH_w = \sum_{y \in W} h_{y,w} \cdot H_y.
\]
(On peut facilement v\'erifier que si $h_{y,w} \neq 0$ alors $y \leq w$, et que $h_{w,w}=1$.) 

Ci-dessous on consid\'erera \'egalement les constantes de structure $\{\mu_{x,y}^z : x,y,z \in W\}$ de la multiplication de $\cH_{(W,S)}$ dans cette base, d\'efinies par
\[
\uH_x \cdot \uH_y = \sum_{z \in W} \mu_{x,y}^z \cdot \uH_z.
\]
Il est facile de voir que si $x \in W$ et $s \in S$ satisfont $xs>x$, alors pour tout $z \in W$ on a
\begin{equation}
\label{eqn:BwBs-perverse}
\mu_{x,s}^z \in \Z.
\end{equation}

\subsection{Repr\'esentations r\'eflexion fid\`eles}
\label{ss:ref-fidele}

Soit $(W,S)$ un syst\`eme de Coxeter, et soit $V$ une repr\'esentation de $W$ de dimension finie sur un corps de caract\'eristique impaire ou nulle. On pose $T:=\{wsw^{-1} : w \in W, \, s \in S\}$. Suivant Soergel~\cite{soergel-bim}, on dit que $V$ est \emph{r\'eflexion fid\`ele} si
\begin{enumerate}
\item
$V$ est fid\`ele;
\item
pour tout $x \in W$, $V^x:=\{v \in V \mid x \cdot v = v\}$ est un hyperplan de $V$ si et seulement si $x \in T$.
\end{enumerate}
(Les \'el\'ements de $T$ sont souvent appel\'es \emph{r\'eflexions}. Notons que nos hypoth\`eses assurent que ces \'el\'ements agissent sur $V$ comme des r\'eflexions, c'est-\`a-dire avec un hyperplan de points fixes et une droite propre de valeur propre associ\'ee $-1$.)

Deux exemples importants de repr\'esentations r\'eflexion fid\`eles sont fournis par la proposition suivante. (Le cas~\eqref{it:ref-fidele-soergel} est important car il montre que tout syst\`eme de Coxeter admet une repr\'esentation r\'eflexion fid\`ele ; le cas~\eqref{it:ref-fidele-cartan} est celui qui apparait naturellement en th\'eorie de Lie.)

\begin{prop}
\label{prop:exemples-ref-fidele}
\begin{enumerate}
\item
\label{it:ref-fidele-soergel}
Soit $(W,S)$ un syst\`eme de Coxeter, et soit $V$ un $\R$-espace vectoriel de dimension finie muni d'une famille libre $(e_s)_{s \in S}$ de vecteurs et d'une famille libre $(e_s^\vee)_{s \in S}$ de formes lin\'eaires telles que
\[
\langle e_t, e_s^\vee \rangle = -2\cos(\pi/m_{st}) \quad \text{pour tous $s,t \in S$}
\]
(o\`u par convention $\cos(\pi/\infty)=1$).
Alors la formule $s \cdot v = v - \langle v, e_s^\vee \rangle e_s$ d\'efinit une repr\'esentation de $W$ sur $V$ qui est r\'eflexion fid\`ele.
\item
\label{it:ref-fidele-cartan}
Soit $A$ une matrice de Cartan g\'en\'eralis\'ee sym\'etrisable\footnote{Cette hypoth\`ese n'est probablement pas n\'ecessaire, mais nous n'avons pas pu trouver de preuve qui l'\'evite.} dont les lignes et colonnes sont param\'etr\'ees par un ensemble fini $I$, et soit $(\frh, \{\alpha_i : i \in I\}, \{\alpha_i^\vee : i \in I\})$ une r\'ealisation de $A$ sur $\R$ au sens de Kac (voir~\cite[Definition~1.1.2]{kumar}). Soit $(W,S)$ le syst\`eme de Coxeter associ\'e (voir~\cite[Definition~1.3.1 \& Proposition~1.3.21]{kumar}). Alors la repr\'esentation de $W$ sur $\frh^*$ est r\'eflexion fid\`ele.
\end{enumerate}
\end{prop}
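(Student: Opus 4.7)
Pour \eqref{it:ref-fidele-soergel}, je proc\'ederais en trois \'etapes. D'abord, je v\'erifierais que la formule $s \cdot v = v - \langle v, e_s^\vee \rangle e_s$ d\'efinit bien une repr\'esentation de $W$. La relation $s^2 = \id$ est imm\'ediate puisque $\langle e_s, e_s^\vee \rangle = -2\cos(\pi/m_{ss}) = -2\cos(\pi) = 2$. Pour les relations de tresse, pour chaque paire $s \neq t$ avec $m_{st} < \infty$, le plan $\R e_s \oplus \R e_t$ est stable par $s$ et $t$, et la formule pour $\langle -,-\rangle$ garantit que $st$ y agit comme une rotation d'angle $2\pi/m_{st}$ (c'est un calcul classique, voir Bourbaki \S V.4) ; sur un suppl\'ementaire convenablement choisi (intersection des noyaux de $e_s^\vee$ et $e_t^\vee$), $s$ et $t$ agissent trivialement. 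On en d\'eduit $(st)^{m_{st}} = \id$, et donc l'existence de la repr\'esentation.

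La fid\'elit\'e se d\'emontre via la construction du \emph{c\^one de Tits} : en posant $C_0 := \{v \in V : \langle v, e_s^\vee \rangle > 0 \text{ pour tout } s \in S\}$, l'argument standard (Bourbaki \S V.4, ou Humphreys) montre que $W$ agit simplement transitivement sur l'ensemble des chambres $\{w \cdot C_0 : w \in W\}$, d'o\`u la fid\'elit\'e. Reste la partie d\'elicate de \eqref{it:ref-fidele-soergel}, \`a savoir que si $V^x$ est un hyperplan alors $x \in T$. Le sens facile (``$x \in T$ implique $\dim V^x = \dim V -1$'') d\'ecoule directement de la formule : $s \in S$ a pour hyperplan de points fixes $\ker(e_s^\vee)$, et $wsw^{-1}$ fixe l'hyperplan $w \cdot \ker(e_s^\vee)$. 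Pour la r\'eciproque, je commencerais par observer que si $V^x$ est un hyperplan et $x \neq \id$, alors $x$ agit par $\pm 1$ sur la droite $V/V^x$ ; comme $x \neq \id$, l'action est par $-1$ (sinon une puissance de $x$ agissant comme $\id$ sur $V$ contredirait la fid\'elit\'e), donc $x$ agit sur $V$ comme une r\'eflexion lin\'eaire. Il faut ensuite montrer que l'hyperplan $V^x$ est n\'ecessairement un \emph{mur} du c\^one de Tits, donc de la forme $V^t$ pour un certain $t \in T$ ; la fid\'elit\'e donne alors $x = t$. C'est l\`a le point le plus subtil : j'utiliserais la g\'eom\'etrie des chambres dans le c\^one de Tits pour montrer qu'une involution lin\'eaire dont l'hyperplan de points fixes rencontre l'int\'erieur du c\^one doit \^echanger deux chambres adjacentes le long d'un mur, ce qui identifie $x$ \`a une conjugu\'ee d'un $s \in S$. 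C'est pr\'ecis\'ement la difficult\'e principale, r\'esolue par Soergel dans~\cite{soergel-bim}.

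Pour \eqref{it:ref-fidele-cartan}, je m'appuierais largement sur la th\'eorie des groupes de Kac--Moody telle qu'expos\'ee dans~\cite{kumar}. La fid\'elit\'e de l'action de $W$ sur $\frh^*$ est \'etablie dans ce cadre via le c\^one de Tits g\'en\'eralis\'e et la transitivit\'e simple sur les chambres de Weyl (Kumar, Chapitre~1). Pour la r\'eflexion-fid\'elit\'e, j'utiliserais la description des hyperplans de points fixes en termes du syst\`eme de racines : les r\'eflexions $t \in T$ sont exactement les $s_\alpha$ pour $\alpha$ une racine r\'eelle, et $V^{s_\alpha} = \ker(\alpha^\vee)$. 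L'hypoth\`ese de sym\'etrisabilit\'e fournit une forme bilin\'eaire $W$-invariante non d\'eg\'en\'er\'ee sur $\frh^*$ (ou entre $\frh$ et $\frh^*$), ce qui permet de transf\'erer l'argument g\'eom\'etrique de la partie~\eqref{it:ref-fidele-soergel} \`a ce cadre : si $x \in W$ a pour hyperplan de points fixes $V^x$ qui rencontre l'int\'erieur du c\^one de Tits, alors $x$ agit comme une r\'eflexion, et cette r\'eflexion doit fixer un mur, donc \^etre associ\'ee \`a une racine r\'eelle. L'obstacle principal ici est de g\'erer le fait que $\frh^*$ peut \^etre de dimension strictement sup\'erieure \`a $|S|$ (cas des r\'ealisations non minimales), mais la d\'ecomposition canonique en partie ``engendr\'ee par les racines'' et son orthogonal (par rapport \`a la forme sym\'etrisante) permet de se ramener au cas standard.
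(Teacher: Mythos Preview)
Your overall strategy is close to the paper's, but there is a genuine gap at the step where you conclude that $x$ acts as a reflection. You argue that $x$ must act by $-1$ on $V/V^x$ because otherwise ``une puissance de $x$ agissant comme $\id$ sur $V$ contredirait la fid\'elit\'e''. This is incorrect: if $x$ fixes the hyperplane $V^x$ pointwise and acts as $+1$ on $V/V^x$, then $x$ is a \emph{transvection} (unipotent with $(x-\id)^2=0$), and in characteristic~$0$ no power of a nontrivial transvection is the identity --- fidelity gives no contradiction whatsoever. Ruling out this $\det(x)=+1$ case is exactly where extra structure must enter. The paper (which gives details only for~\eqref{it:ref-fidele-cartan}, referring to~\cite{soergel-bim} for~\eqref{it:ref-fidele-soergel}) uses a $W$-invariant nondegenerate symmetric bilinear form: writing $x(c)=c+v_0$ with $v_0 \in V^x$ for some $c \notin V^x$, invariance of $\langle v+\lambda c, v+\lambda c\rangle$ for all $v\in V^x$ and $\lambda\in\R$ forces $v_0$ into the radical of the form, hence $v_0=0$ and $x=\id$, a contradiction. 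The existence of this form is precisely the role of the symmetrisability hypothesis (via~\cite[Proposition~1.5.2]{kumar}); you mention the form in your sketch for~\eqref{it:ref-fidele-cartan}, but you do not deploy it at the point where it is actually needed.

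For the case $\det(x)=-1$ your Tits cone idea is correct in spirit, but the geometry is the reverse of what you wrote: $V^x$ meets \emph{no} open chamber $y(D^\circ)$ (points there have trivial stabiliser). The paper instead observes that for $v\in D^\circ$ the segment from $v$ to $x(v)$ crosses $V^x$ inside the (convex) Tits cone, hence at a point lying on some reflection hyperplane separating $D^\circ$ from $x(D^\circ)$; only finitely many such hyperplanes exist, so an open portion of $V^x$ is covered by their union, forcing $V^x$ to coincide with one of them, say $V^t$ with $t\in T$. Then $xt$ has determinant $+1$ and its fixed-point set contains a hyperplane, so the previous case gives $xt=e$, i.e.\ $x=t\in T$.
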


\begin{proof}
Le cas~\eqref{it:ref-fidele-soergel} est d\'emontr\'e dans~\cite[Proposition~2.1]{soergel-bim}. (La condition que $V$ est de dimension minimale impos\'ee dans~\cite{soergel-bim} n'est pas n\'ecessaire.) Le cas~\eqref{it:ref-fidele-cartan} est probablement bien connu, mais n'est pas trait\'e dans la litt\'erature \`a notre connaissance. Donnons-en donc la preuve (qui est tr\`es similaire \`a celle de~\cite[Proposition~2.1]{soergel-bim}).

Puisque $W$ est d\'efini comme un sous-groupe des automorphismes de $\frh^*$, la repr\'esentation de $W$ sur $\frh^*$ est fid\`ele ; par ailleurs, tous les \'el\'ements de $T$ agissent par des r\'eflexions. Soit maintenant $x \in W$, et supposons que $(\frh^*)^x$ est un hyperplan de $\frh^*$. Comme $W$ est engendr\'e par des r\'eflexions de $\frh^*$, on a $\det(x) \in \{\pm 1\}$.

Supposons tout d'abord que $\det(x)=1$. Choisissons $c \in \frh^*$ tel que $\frh^*=(\frh^*)^x \oplus \R c$. Alors on a $x(c) \in c + (\frh^*)^x$; on note $v_0 = x(c)-c$. Pour tous $v \in (\frh^*)^x$ et $\lambda \in \R$ on a donc $x(v+\lambda c) = v + \lambda c + \lambda v_0$. Puisque $A$ est sym\'etrisable, d'apr\`es~\cite[Proposition~1.5.2]{kumar} il existe une forme bilin\'eaire sym\'etrique non d\'eg\'en\'er\'ee $\langle -,- \rangle$ sur $\frh^*$ qui est $W$-invariante. Pour $v$ et $\lambda$ comme ci-dessus on a alors $\langle v+\lambda c, v+\lambda c \rangle = \langle v + \lambda c + \lambda v_0, v + \lambda c + \lambda v_0\rangle$, c'est-\`a-dire
\[
2\lambda \langle v,v_0 \rangle + \lambda^2 \bigl( 2\langle c,v_0 \rangle + \langle v_0,v_0 \rangle \bigr)=0.
\]
On en d\'eduit que $\langle 2c+v_0, v_0 \rangle=0$ et que $\langle v,v_0 \rangle =0$ pour tout $v \in (\frh^*)^x$. Donc $v_0$ appartient au noyau de notre forme, ce qui implique que $v_0=0$. Donc $c \in (\frh^*)^x$, une contradiction.

On suppose maintenant que $\det(x)=-1$. Soient
\[
D = \{v \in \frh^* \mid \forall i \in I, \ \langle v, \alpha_i^\vee \rangle \geq 0 \}, \quad D^\circ = \{v \in \frh^* \mid \forall i \in I, \ \langle v, \alpha_i^\vee \rangle > 0 \},
\]
et consid\'erons \'egalement le \emph{c\^one de Tits} $C=\bigcup_{y \in W} y(D)$. Rappelons que $C$ est convexe ; voir~\cite[Proposition~1.4.2(c)]{kumar}. 
Les hyperplans de r\'eflexion des \'el\'ements de $T$ sont les hyperplans d\'efinis par les coracines r\'eelles positives. Si $v \in D^\circ$, d'apr\`es~\cite[Proposition~1.4.2(a)]{kumar} on a $\mathrm{Stab}_W(v)=\{1\}$. Donc $D^\circ$ et $x(D^\circ)$ n'intersectent aucun de ces hyperplans. Il d\'ecoule de~\cite[Proposition~1.4.2(c)]{kumar} que seul un nombre fini de ces hyperplans s\'eparent $D^\circ$ et $x(D^\circ)$.

Pour une raison similaire, $(\frh^*)^x$ ne peut intersecter aucune r\'egion de la forme $y(D^\circ)$ pour $y \in W$. Si $v \in D^\circ$, le segment joignant $v$ \`a $x(v)$ croise $(\frh^*)^x$. Ce point d'intersection appartient \`a $C$ (par convexit\'e) mais \`a aucun des $y(D^\circ)$ pour $y \in W$ ; donc il appartient \`a un hyperplan de r\'eflexion associ\'e \`a un \'el\'ement de $T$, qui doit s\'eparer $D^\circ$ et $x(D^\circ)$. Ainsi, une partie ouverte de $(\frh^*)^x$ est r\'eunion de ses intersections avec un nombre fini d'hyperplans de r\'eflexion. Ceci implique que $(\frh^*)^x$ co\"incide avec l'un de ces hyperplans, dont nous noterons $t$ la r\'eflexion associ\'ee.
Alors $xt$ est un \'el\'ement de $W$ de d\'eterminant $1$ et dont les points fixes contiennent un hyperplan. Par le cas trait\'e ci-dessus, on a n\'ecessairement $xt=e$, donc $x=t$.
\end{proof}

\begin{rema}
Si $A$ est une matrice de Cartan g\'en\'eralis\'ee sym\'etrisable, et si de plus $a_{ij} a_{ji} \leq 4$ pour tous $i,j \in I$, alors on peut v\'erifier que les repr\'esentations du syst\`eme de Coxeter $(W,S)$ associ\'e donn\'ees par les points~\eqref{it:ref-fidele-soergel} et~\eqref{it:ref-fidele-cartan} de la Proposition~\ref{prop:exemples-ref-fidele} sont isomorphes. Par contre, si $a_{ij} a_{ji} \geq 5$ pour un couple $(i,j)$, ces repr\'esentations ne sont pas isomorphes en g\'en\'eral ; en fait elle n'ont m\^eme pas n\'ecessairement la m\^eme dimension.
\end{rema}

\subsection{Bimodules de Soergel}
\label{ss:bimod-soergel}

Fixons un syst\`eme de Coxeter $(W,S)$ et une repr\'esentation r\'eflexion fid\`ele $V$ de $(W,S)$ sur un corps $k$ infini\footnote{Cette condition n'est pas n\'ecessaire, mais elle est impos\'ee dans~\cite{soergel-bim} pour pouvoir identifier $R$ \`a l'alg\`ebre des fonctions polynomiales sur $V$.}. On pose $R:=\mathrm{Sym}(V^*)$, qu'on consid\`ere comme une alg\`ebre gradu\'ee avec les formes lin\'eaires $V^* \subset R$ en degr\'e $2$. Pour tout $s \in S$, on d\'efinit un bimodule gradu\'e $\B_s$ sur $R$ en posant
\[
\B_s := R \otimes_{R^s} R (1),
\]
o\`u $R^s \subset R$ d\'esigne les invariants de $s$ et o\`u $(1)$ est le foncteur de d\'ecalage de la graduation tel que $(M(1))^n = M^{n+1}$, de sorte que $\B_s$ est engendr\'e en degr\'e $-1$. (Ci-dessous, on notera $(n)$ la $n$-i\`eme puissance de $(1)$, pour tout $n \in \Z$, et on utilisera cette notation pour d'autres cat\'egories d'objets $\Z$-gradu\'es.) 

Pour tout mot $\uw=(s_1, \cdots, s_r)$ en $S$ on d\'efinit le \emph{bimodule de Bott--Samelson} $\BS(\uw)$ associ\'e \`a $\uw$ en posant
\[
\BS(\uw) := \B_{s_1} \cdots \B_{s_r}.
\]
(Ici et ci-dessous, on omet le symbole de produit tensoriel $\otimes_R$, et nous \'ecrivons donc $MN$ au lieu de $M \otimes_R N$.) Par convention, si $\uw$ est vide ce produit tensoriel est interpr\'et\'e comme \'egal \`a $R$.

On d\'efinit la cat\'egorie $\scB$ des \emph{bimodules de Soergel} associ\'ee \`a $(W,S)$ et $V$ comme la sous-cat\'egorie pleine de la cat\'egorie des $R$-bimodules gradu\'es dont les objets sont les sommes directes de facteurs directs de bimodules de la forme $\BS(\uw)(n)$ pour $\uw$ un mot en $S$ et $n \in \Z$. Comme $\BS(\uw_1) \BS(\uw_2) = \BS(\uw_1\uw_2)$, cette sous-cat\'egorie est stable par produit tensoriel, et donc munie d'une structure naturelle de cat\'egorie mono\"idale.

Il est facile de voir que le bimodule $\B_s$ est libre et de type fini comme $R$-module \`a droite et comme $R$-module \`a gauche. Il en est donc de m\^eme des bimodules $\BS(\uw)$, et par suite (puisque tout $R$-module gradu\'e projectif de type fini est libre) de tous les objets de $\scB$. On en d\'eduit que la cat\'egorie $\scB$ est de Krull--Schmidt ; en particulier, tout objet admet une d\'ecomposition essentiellement unique en somme d'objets ind\'ecomposables, et un objet est ind\'ecomposable si et seulement si son anneau d'endomorphismes est local.

\begin{rema}
La d\'efinition de $\scB$ donn\'ee ci-dessus n'est pas identique \`a celle adopt\'ee dans~\cite{soergel-bim}. Plus pr\'ecis\'ement, Soergel commence par d\'efinir un morphisme de $\cH_{(W,S)}$ vers l'anneau de Grothendieck scind\'e des $R$-bimodules gradu\'es de type fini \`a gauche et \`a droite, puis d\'efinit $\scB$ comme la sous-cat\'egorie pleine dont les objets sont ceux dont la classe appartient \`a l'image de ce morphisme ; voir~\cite[Definition 5.11]{soergel-bim}. Les objets de $\scB$ sont alors des facteurs directs de sommes de d\'ecal\'es de bimodules de Bott--Samelson (voir~\cite[Lemma 5.13]{soergel-bim}), mais a priori $\scB$ ne contient pas \emph{tous} les tels facteurs directs. Il d\'emontre cependant \`a la fin de son article que $\scB$ est stable par facteurs directs, voir~\cite[Satz 6.14]{soergel-bim}. Cette cat\'egorie contient donc tous les facteurs directs de sommes de d\'ecal\'es de bimodules de Bott--Samelson, ce qui montre qu'elle peut \^etre d\'efinie comme on l'a fait ci-dessus.
\end{rema}

\subsection{Th\'eor\`eme de cat\'egorification}

On continue avec les conventions du~\S\ref{ss:bimod-soergel}.
Notons $[\scB]$ le groupe de Grothendieck scind\'e\footnote{Rappelons que le groupe de Grothendieck scind\'e d'une cat\'egorie additive (essentiellement petite) est le quotient du groupe ab\'elien libre engendr\'e par les classes d'isomorphisme d'objets par les relations $[M] = [M'] + [M'']$ si $M \cong M' \oplus M''$.} de $\scB$, qu'on consid\`ere comme une alg\`ebre pour le produit induit par la structure mono\"idale sur $\scB$. Cette alg\`ebre est en fait une $\Z[\vv,\vv^{-1}]$-alg\`ebre pour l'action d\'efinie par $\vv \cdot [M] = [M(1)]$. Le th\'eor\`eme suivant est l'un des r\'esultats principaux de~\cite{soergel-bim}.

\begin{theo}
\label{theo:categorification}
Il existe un unique isomorphisme de $\Z[\vv,\vv^{-1}]$-alg\`ebres
\[
\cH_{(W,S)} \xrightarrow{\sim} [\scB]
\]
envoyant $\uH_s = H_s + \vv$ sur $\B_s$.
\end{theo}

L'unicit\'e dans ce th\'eor\`eme est \'evidente, puisque les \'el\'ements $\uH_s$ engendrent $\cH_{(W,S)}$ comme $\Z[\vv,\vv^{-1}]$-alg\`ebre. Ce qu'il faut d\'emontrer est donc que les classes $\{[\B_s] : s \in S\}$ v\'erifient les relations d\'efinissant $\cH_{(W,S)}$ (ce qui se ram\`ene au cas des groupes dih\'edraux, et peut se faire ``\`a la main'' ; voir~\cite[\S 4]{soergel-bim}), puis que le morphisme obtenu est un isomorphisme. Pour cela, Soergel construit un inverse \`a gauche de ce morphisme, puis classifie les objets ind\'ecomposables de $\scB$, de la fa{\c c}on suivante (voir~\cite[Satz~6.14]{soergel-bim}).

\begin{theo}
\label{theo:classification}
Pour tout $w \in W$ et toute expression r\'eduite $\uw$ de $w$, il existe un unique facteur direct $\B_w$ de $\BS(\uw)$ qui n'est facteur direct d'aucun bimodule gradu\'e de la forme $\BS(\uw')(n)$ avec $n \in \Z$ et $\uw'$ un mot de longueur strictement plus petite que celle de $\uw$. De plus, $\B_w$ ne d\'epend que de $w$, et les bimodules $\{\B_w(n) : w \in W, \, n \in \Z\}$ forment une famille de repr\'esentants des classes d'isomorphisme d'objets ind\'ecomposables dans $\scB$.
\end{theo}

\begin{rema}
\begin{enumerate}
 \item 
Pour $s \in S$, le bimodule gradu\'e $\B_s$ est clairement ind\'ecomposable, donc cette notation n'est pas ambigu\"e : cet objet est bien le bimodule de Soergel ind\'ecomposable associ\'e \`a l'\'el\'ement $s \in W$.
\item
Puisque $R$ est concentr\'e en degr\'es pairs, tout bimodule gradu\'e ind\'ecomposable est soit concentr\'e en degr\'es pairs, soit concentr\'e en degr\'es impairs.
\item
L'objet $\B_w$ n'est d\'efini qu'\`a isomorphisme pr\`es. Dans les cas o\`u la conjecture de Soergel (voir le~\S\ref{ss:conj-Soergel} ci-dessous) est connue, la formule~\eqref{eqn:Hom-fomula} ci-dessous montre que $\End_{\scB}(\B_w)=k$ ; l'isomorphisme consid\'er\'e ci-dessus est donc unique \`a scalaire pr\`es dans ces cas.
\end{enumerate}
\end{rema}

Les propri\'et\'es suivantes sont faciles \`a v\'erifier et extr\`emement utiles. (Ici, le cas $w=s$ de~\eqref{it:ws-1} est la ``cat\'egorification'' des relations quadratiques dans $\cH_{(W,S)}$.)

\begin{lemm}
\label{lemm:BwBs}
Soient $w \in W$ et $s \in S$.
\begin{enumerate}
\item
\label{it:ws-1}
Si $ws<w$, alors $\B_w \B_s \cong \B_w(1) \oplus \B_w(-1)$.
\item
Si $ws>w$, alors $\B_{ws}$ est un facteur direct de $\B_w\B_s$ avec multiplicit\'e $1$, et tous les autres facteurs directs sont de la forme $\B_y(i)$ avec $y < ws$.
\end{enumerate}
\end{lemm}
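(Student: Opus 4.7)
L'ingr\'edient fondamental est la version cat\'egorifi\'ee de la relation quadratique : $\B_s \B_s \cong \B_s(1) \oplus \B_s(-1)$, qui est pr\'ecis\'ement le cas $w=s$ de~\eqref{it:ws-1}. Celle-ci s'\'etablit directement en choisissant un \'el\'ement $\alpha_s \in V^*$ satisfaisant $s(\alpha_s) = -\alpha_s$ : on a alors $R = R^s \oplus R^s\alpha_s \cong R^s \oplus R^s(-2)$ comme $(R^s, R^s)$-bimodules gradu\'es, ce qui donne
\[
\B_s \B_s = \bigl( R \otimes_{R^s} R \otimes_{R^s} R \bigr)(2) \cong \bigl(R \otimes_{R^s} R\bigr)(2) \oplus \bigl(R \otimes_{R^s} R\bigr) \cong \B_s(1) \oplus \B_s(-1).
\]

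On proc\`ede ensuite par r\'ecurrence sur $\ell(w)$, en \'etablissant~\eqref{it:ws-1} et la partie~(2) simultan\'ement. Pour~\eqref{it:ws-1} avec $ws<w$, on fixe une expression r\'eduite $\uw = \uw' s$ de $w$. Le Th\'eor\`eme~\ref{theo:classification} fournit une d\'ecomposition $\BS(\uw) = \B_w \oplus N$ o\`u $N$ est somme directe de d\'ecal\'es de $\B_z$ pour $z<w$ ; donc $\B_w\B_s$ est un facteur direct de $\BS(\uw)\B_s = \BS(\uw')(\B_s\B_s) \cong \BS(\uw)(1) \oplus \BS(\uw)(-1)$, ses facteurs ind\'ecomposables \'etant tous de la forme $\B_z(n)$ avec $z \leq w$. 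Pour identifier pr\'ecis\'ement la d\'ecomposition, on calcule $[\B_w\B_s] = [\B_w]\cdot \uH_s$ dans $[\scB]$ via le Th\'eor\`eme~\ref{theo:categorification}. Le point cl\'e est la \emph{triangularit\'e} $[\B_w] \in H_w + \sum_{y<w} \Z[\vv,\vv^{-1}]\cdot H_y$ dans la base standard (cons\'equence d'une r\'ecurrence sur $\ell(w)$ utilisant le Th\'eor\`eme~\ref{theo:classification} et le d\'eveloppement $[\BS(\uw)] = \uH_{s_1}\cdots\uH_{s_r}$). Combin\'ee \`a la relation $\uH_s^2 = (\vv+\vv^{-1})\uH_s$ et \`a l'hypoth\`ese de r\'ecurrence, on obtient $[\B_w]\cdot\uH_s = (\vv+\vv^{-1})[\B_w] = [\B_w(1)] + [\B_w(-1)]$ dans $[\scB]$. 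Comme $\scB$ est de Krull--Schmidt et que les classes $\{[\B_z(n)] : z\in W,\, n\in\Z\}$ forment une base de $[\scB]$, cette \'egalit\'e entra\^{\i}ne l'isomorphisme voulu $\B_w\B_s \cong \B_w(1) \oplus \B_w(-1)$.

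Pour la partie~(2) avec $ws>w$, on prend une expression r\'eduite $\uw$ de $w$ ; alors $\uw s$ est r\'eduite pour $ws$, et $\BS(\uw s) = \B_w\B_s \oplus N\B_s$ avec la d\'ecomposition pr\'ec\'edente. Par hypoth\`ese de r\'ecurrence, et gr\^ace \`a la propri\'et\'e standard de ``lifting'' de l'ordre de Bruhat (qui donne $zs \leq ws$, d'o\`u $zs < ws$ pour $z<w$ avec $zs>z$), tous les facteurs ind\'ecomposables de $N\B_s$ sont de la forme $\B_y(j)$ avec $y<ws$. Comme $\B_{ws}$ est par d\'efinition un facteur direct de $\BS(\uw s)$ qui n'appara\^{\i}t dans aucun Bott--Samelson plus court, il doit figurer dans $\B_w\B_s$. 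La multiplicit\'e~$1$ et les in\'egalit\'es strictes $y<ws$ pour les autres facteurs d\'ecoulent du calcul du coefficient de $H_{ws}$ dans $[\B_w]\cdot\uH_s$ : la triangularit\'e de $[\B_w]$ donne imm\'ediatement que ce coefficient vaut $1$, provenant uniquement du terme $H_w \cdot H_s = H_{ws}$.

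Le principal obstacle est la v\'erification de l'identit\'e $[\B_w]\cdot\uH_s = (\vv+\vv^{-1})[\B_w]$ pour $ws<w$ : elle repose sur une analyse fine des coefficients de $[\B_w]$ dans la base standard, via la r\'ecurrence entrelac\'ee utilisant le Th\'eor\`eme~\ref{theo:classification}, qui fournit le caract\`ere ``minimal'' de $\B_w$ dans $\BS(\uw)$.
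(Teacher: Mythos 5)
Le papier ne donne pas de preuve de ce lemme (qualifi\'e de ``facile \`a v\'erifier'') ; j'\'evalue donc votre argument sur ses propres m\'erites. Votre traitement du cas de base $\B_s\B_s \cong \B_s(1)\oplus\B_s(-1)$ et du point~(2) est correct. En revanche, le point~(1) dans le cas g\'en\'eral comporte une lacune s\'erieuse, que vous reconnaissez d'ailleurs vous-m\^eme sans la combler (``Le principal obstacle\ldots'').

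Le probl\`eme est l'\'egalit\'e $[\B_w]\cdot\uH_s = (\vv+\vv^{-1})[\B_w]$ pour $ws<w$. La triangularit\'e $\mathrm{ch}_\Delta(\B_w) \in H_w + \sum_{y<w}\Z[\vv,\vv^{-1}]H_y$ dans la base standard, combin\'ee \`a la relation $\uH_s^2=(\vv+\vv^{-1})\uH_s$, ne suffit pas : cette \'egalit\'e \'equivaut \`a l'appartenance de $\mathrm{ch}_\Delta(\B_w)$ \`a l'id\'eal \`a droite $\cH\uH_s$, c'est-\`a-dire \`a des relations pr\'ecises entre les coefficients de $H_y$ et de $H_{ys}$ pour chaque paire $\{y,ys\}$, ce que la seule triangularit\'e ne donne pas. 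Avec votre d\'ecomposition $\BS(\uw)=\B_w\oplus N$, le calcul donne
\[
[\B_w]\uH_s - (\vv+\vv^{-1})[\B_w] \;=\; (\vv+\vv^{-1})[N] - [N]\uH_s,
\]
et pour conclure il faudrait savoir que \emph{tous} les facteurs ind\'ecomposables $\B_z(n)$ de $N$ v\'erifient $zs<z$, afin que $[N]\uH_s=(\vv+\vv^{-1})[N]$ par hypoth\`ese de r\'ecurrence. Vous n'\'etablissez pas ce fait (qui n'est pas une cons\'equence formelle de Krull--Schmidt ni du Th\'eor\`eme~\ref{theo:classification} seuls), et a priori $N$ pourrait contenir des $\B_z$ avec $zs>z$, auquel cas votre r\'ecurrence produit au contraire des termes $\B_{zs}$ suppl\'ementaires \`a contr\^oler.

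Pour combler ce trou, l'argument standard est de nature structurale, pas seulement combinatoire : on montre que tout bimodule $M$ qui est facteur direct d'un $N\B_s$ est lui-m\^eme ``induit'' d'un $(R,R^s)$-bimodule (ou, de fa{\c c}on \'equivalente, on exploite la structure d'objet de Frobenius de $\B_s$ dans $\scB$ pour construire explicitement des inclusions scind\'ees $\B_w(\pm 1)\hookrightarrow\B_w\B_s$ et conclure par comparaison de rangs gradu\'es). Ce point structurel fournit automatiquement que tout facteur direct $\B_z$ d'un $N\B_s$ v\'erifie $zs<z$, ce qui \`a son tour justifie votre calcul dans le groupe de Grothendieck. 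Sans cet ingr\'edient suppl\'ementaire, la preuve telle qu'\'ecrite est incompl\`ete.
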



La preuve du Th\'eor\`eme~\ref{theo:classification} repose sur une formule pour la dimension des espaces de morphismes entre bimodules de Soergel, qui jouera \'egalement un r\^ole important dans la suite. Remarquons que tout $R$-bimodule peut \^etre vu naturellement comme un faisceau coh\'erent sur $V \times V$. Pour tout $x \in W$ on pose
\[
\mathrm{Gr}(x):=\{(x \cdot v, v) : v \in V\} \subset V \times V,
\]
et pour $A \subset W$ fini on note
\[
\mathrm{Gr}(A) = \bigcup_{x \in A} \mathrm{Gr}(x).
\]
Pour tout $R$-bimodule $M$ et tout $A \subset W$, on note $\Gamma_A(M)$ le sous-bimodule de $M$ constitu\'e des \'el\'ements dont le support est contenu dans $\mathrm{Gr}(A')$ (c'est-\`a-dire dont la restriction au compl\'ementaire de $\mathrm{Gr}(A')$ est nulle) pour $A'$ une partie finie de $A$.
Pour $i \in \Z$
on notera $\Gamma_{\leq i}$ au lieu de $\Gamma_{\{y \in W \mid \ell(y) \leq i\}}$, et de m\^eme pour $\Gamma_{<i}$, $\Gamma_{\geq i}$ et $\Gamma_{>i}$.

Pour $x \in W$ on pose
\[
\Delta_x := \cO(\mathrm{Gr}(x)) (-\ell(x)), \quad \nabla_x := \cO(\mathrm{Gr}(x)) (\ell(x)).
\]
(En termes plus concrets, on peut identifier $\mathrm{Gr}(x)$ \`a $V$ via la premi\`ere projection ; ceci fournit un isomorphisme entre $\Delta_x$ -- ou $\nabla_x$ -- et $R$, sous lequel l'action de $R$ \`a gauche s'identifie \`a l'action naturelle par multiplication. L'action \`a droite d'un \'el\'ement $f \in R$ s'identifie elle \`a la multiplication par $x \cdot f$.) 
Si $M$ est un $R$-bimodule gradu\'e, on dira que $M$ \emph{admet une $\Delta$-filtration}, resp.~\emph{une $\nabla$-filtration}, si $M=\Gamma_A(M)$ pour une partie finie $A \subset W$ et si pour tout $i \in \Z$ il existe un isomorphisme
\begin{multline*}
\Gamma_{\geq i} (M) / \Gamma_{>i} (M) \cong \bigoplus_{\substack{\ell(x)=i \\ n \in \Z}} \bigl( \Delta_x(n) \bigr)^{\oplus d(M,x,n)}, \\
\text{resp.} \quad \Gamma_{\leq i} (M) / \Gamma_{<i} (M) \cong \bigoplus_{\substack{\ell(x)=i \\ n \in \Z}} \bigl( \nabla_x(n) \bigr)^{\oplus d'(M,x,n)}
\end{multline*}
pour des entiers $d(M,x,n) \in \Z_{\geq 0}$, resp.~$d'(M,x,n) \in \Z_{\geq 0}$. On pose alors
\[
\mathrm{ch}_\Delta(M) = \sum_{\substack{x \in W \\ i \in \Z}} d(M,x,i) \vv^i H_x, \quad \text{resp.} \quad \mathrm{ch}_\nabla(M) = \sum_{\substack{x \in W \\ i \in \Z}} d'(M,x,i) \vv^{-i} H_x.
\]

\begin{rema}
\label{rema:Delta-nabla-filtration}
\begin{enumerate}
\item
Il est clair que si $M$ admet une $\Delta$-filtration, respectivement une $\nabla$-filtration, il en est de m\^eme de $M(1)$, et qu'on a $\mathrm{ch}_\Delta(M(1))=\vv \cdot \mathrm{ch}_\Delta(M)$, respectivement $\mathrm{ch}_\nabla(M(1)) = \vv^{-1} \cdot \mathrm{ch}_\nabla(M)$.
\item
\label{it:filtration-libre}
Puisque les bimodules $\Delta_x$ et $\nabla_x$ sont libres comme $R$-modules \`a gauche et \`a droite, tout bimodule gradu\'e qui admet une $\Delta$-filtration ou une $\nabla$-filtration est libre comme $R$-module \`a gauche et \`a droite.
\item
\label{it:Hom-R}
Si $M$ admet une $\nabla$-filtration, alors il existe un isomorphisme canonique
\[
\Hom_{(R,R)}(R, M) \simto \Gamma_{\leq 0}(M)
\]
(o\`u $\Hom_{(R,R)}(-,-)$ d\'esigne l'espace des morphismes de $R$-bimodules non n\'ecessairement gradu\'es). En effet, il est clair que tout morphisme de $R$-bimodules $R \to M$ se factorise par $\Gamma_{\leq 0}(M)$, et comme $\Gamma_{\leq 0}(M)$ est une somme de copies de $\nabla_e$ on a $\Hom_{(R,R)}(R, \Gamma_{\leq 0}(M)) \simto \Gamma_{\leq 0}(M)$. Donc pour conclure il suffit de remarquer que $\Hom_{(R,R)}(R, \Gamma_{\leq i} (M) / \Gamma_{<i} (M))=0$ pour tout $i>0$ et d'utiliser les suites exactes appropri\'ees.
\item
\label{it:ch-Bs}
Choisissons une forme lin\'eaire $\alpha_s$ dont le noyau est $V^s$.
La suite exacte
\[
R(-1) \hookrightarrow \B_s \twoheadrightarrow \mathcal{O}(\mathrm{Gr}(s))(1), \quad \text{resp.} \quad \mathcal{O}(\mathrm{Gr}(s))(-1) \hookrightarrow \B_s \twoheadrightarrow R(1),
\]
o\`u les morphismes sont d\'efinis par $r \mapsto r \cdot \frac{1}{2}(\alpha_s \otimes 1 + 1 \otimes \alpha_s)$, resp.~$r \mapsto r \cdot \frac{1}{2}(\alpha_s \otimes 1 - 1 \otimes \alpha_s)$, et $r \otimes r' \mapsto rs(r')$, resp.~$r \otimes r' \mapsto rr'$, montre que $\mathrm{ch}_\nabla(\B_s) = \underline{H}_s$, resp.~$\mathrm{ch}_\Delta(\B_s)=\underline{H}_s$.
\end{enumerate}
\end{rema}

Soergel d\'emontre alors le r\'esultat suivant dans~\cite{soergel-bim}, qui montre que les bimodules de Soergel v\'erifient des propri\'et\'es similaires \`a celles des objets basculants dans une cat\'egorie de plus haut poids (voir par exemple~\cite[\S 7.5]{riche-hab}). Dans cet \'enonc\'e on note $(-,-)_{\cH}$ la forme $\Z[\vv,\vv^{-1}]$-bilin\'eaire sym\'etrique sur $\cH_{(W,S)}$ (\`a valeurs dans $\Z[\vv,\vv^{-1}]$) qui v\'erifie $(H_x, H_y)_{\cH} = \delta_{xy}$.

\begin{prop}
\label{prop:filtrations-soergel}
\begin{enumerate}
\item
\label{it:filtrations-soergel-1}
Tout bimodule de Soergel $B$ admet une $\Delta$-filtration et une $\nabla$-filtration ; de plus on a
\[
\mathrm{ch}_\Delta(B) = d(\mathrm{ch}_\nabla(B)),
\]
o\`u $d$ est l'involution consid\'er\'ee au~\S{\rm \ref{ss:def-Coxeter}}.
\item
\label{it:filtrations-soergel-2}
L'application $[B] \mapsto \mathrm{ch}_\Delta(B)$ est l'inverse de l'isomorphisme du Th\'eor\`eme~{\rm \ref{theo:categorification}}; en particulier c'est un morphisme d'alg\`ebres, et de m\^eme pour $\mathrm{ch}_\nabla$.
\item
\label{it:filtrations-soergel-3}
Si $B$ et $B'$ sont des bimodules de Soergel, alors on a
\begin{equation}
\label{eqn:Hom-fomula}
\sum_{i \in \Z} \dim_k \Hom_{\scB}(B,B'(i)) \cdot \vv^{i} = (\mathrm{ch}_\Delta(B), \mathrm{ch}_\nabla(B'))_{\cH}.
\end{equation}
\end{enumerate}
\end{prop}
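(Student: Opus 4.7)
Let me plan the three parts in turn.

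For~\eqref{it:filtrations-soergel-1}, I would argue by induction on the length of a word $\uw$, showing that every Bott--Samelson bimodule $\BS(\uw)$ admits both a $\Delta$- and a $\nabla$-filtration. The base cases $\uw = \emptyset$ (giving $R = \Delta_e = \nabla_e$) and $\uw = (s)$ (Remark~\ref{rema:Delta-nabla-filtration}(\ref{it:ch-Bs})) are immediate. The inductive step rests on the computation, via composition of graphs $\mathrm{Gr}(x) \cdot \mathrm{Gr}(s) = \mathrm{Gr}(xs)$,
\[
\Delta_x \otimes_R \Delta_s \cong \Delta_{xs}(\ell(xs) - \ell(x) - 1), \quad \Delta_x \otimes_R \nabla_s \cong \Delta_{xs}(\ell(xs) - \ell(x) + 1),
\]
whose grading shifts vanish in the first (resp.\ second) formula precisely when $xs > x$ (resp.\ $xs < x$). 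Given $M$ with a $\Delta$-filtration, tensoring piece by piece with the $\Delta$- or $\nabla$-short exact sequence of $\B_s$ from Remark~\ref{rema:Delta-nabla-filtration}(\ref{it:ch-Bs}), according to the sign of $\ell(xs) - \ell(x)$ at each occurring $x$, produces subquotients placed in the correct Bruhat order to assemble into an honest $\Delta$-filtration of $M \otimes \B_s$. Direct summands inherit the property by graded Krull--Schmidt, since $\End_{(R,R)}(\Delta_x) \cong R$ is graded-local and the $\Delta_x(n)$ are pairwise non-isomorphic. For the identity $\mathrm{ch}_\Delta = d \circ \mathrm{ch}_\nabla$, both characters coincide with the $d$-invariant element $\uH_s$ on $\B_s$; to extend to all Soergel bimodules I would introduce a graded duality $\D$ on $\scB$ (for instance $\Hom^\bullet$ into $R$ for the right $R$-structure, with a suitable degree shift) that swaps $\Delta_x \leftrightarrow \nabla_x$, so that $\mathrm{ch}_\nabla \circ \D = d \circ \mathrm{ch}_\Delta$, preserves $\scB$ (because $\D(\B_s) \cong \B_s$), and fixes each indecomposable $\B_w$ up to isomorphism.

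Part~\eqref{it:filtrations-soergel-2} follows easily: the inductive step above actually yields $\mathrm{ch}_\Delta(M \otimes \B_s) = \mathrm{ch}_\Delta(M) \cdot \uH_s$ for every $\Delta$-filtered $M$, hence $\mathrm{ch}_\Delta(\BS(\uw)) = \uH_{s_1} \cdots \uH_{s_r}$. Composing with the ring morphism $\cH_{(W,S)} \to [\scB]$ of Theorem~\ref{theo:categorification} therefore gives the identity on each generator $\uH_s$, hence on all of $\cH_{(W,S)}$ by $\Z[\vv, \vv^{-1}]$-linearity. Since this morphism is an isomorphism by Theorem~\ref{theo:categorification}, $\mathrm{ch}_\Delta$ is its two-sided inverse and in particular multiplicative; the argument for $\mathrm{ch}_\nabla$ is identical.

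For~\eqref{it:filtrations-soergel-3}, both sides of~\eqref{eqn:Hom-fomula} are additive in $B$ and $B'$, so \eqref{it:filtrations-soergel-1}--\eqref{it:filtrations-soergel-2} reduce the statement to Bott--Samelson bimodules. I then induct on the length of $\uw$ for $B = \BS(\uw)$, using a graded version of the self-adjunction $\Hom_{\scB}(M \otimes \B_s, N) \cong \Hom_{\scB}(M, N \otimes \B_s)$ coming from self-duality of $\B_s$ over $R$; this mirrors the Frobenius-type identity $(x \cdot \uH_s, y)_{\cH} = (x, y \cdot \uH_s)_{\cH}$ satisfied by the bilinear form against the $d$-invariant elements $\uH_s$. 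The base case $B = R$ is then handled via Remark~\ref{rema:Delta-nabla-filtration}(\ref{it:Hom-R}): $\Hom_{(R,R)}(R, B') \cong \Gamma_{\leq 0}(B')$, and the graded pieces of the latter read off the $\nabla_e(n)$-multiplicities in the $\nabla$-filtration of $B'$, assembling (up to the Poincar\'e series of $R$) into the coefficient of $H_e$ in $\mathrm{ch}_\nabla(B')$, which is $(\mathrm{ch}_\Delta(R), \mathrm{ch}_\nabla(B'))_{\cH}$.

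The main obstacle is the inductive construction of the $\Delta$-filtration on $M \otimes \B_s$ in~\eqref{it:filtrations-soergel-1}. Naively tensoring a $\Delta$-filtration of $M$ with the $\Delta$-short exact sequence of $\B_s$ produces a filtration whose successive subquotients sit in the \emph{wrong} Bruhat order whenever some $x$ in the support of $M$ satisfies $xs < x$ (as one sees already with $M = \Delta_s$, where $\Delta_s \otimes \B_s \cong \B_s(-1)$ does not have $\Delta_e(-2) \subset \Delta_s \otimes \B_s$ as its $\Gamma_{\geq 1}$). The remedy of switching to the $\nabla$-short exact sequence of $\B_s$ at such $x$'s is straightforward for each individual piece, but gluing the pieces into a single honest filtration of $M \otimes \B_s$ by graded subbimodules --- not merely identifying the associated graded --- requires a careful extension argument, and constitutes the technical heart of the proposition.
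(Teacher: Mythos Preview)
Your plan is essentially correct and follows the approach of Soergel's original paper~\cite{soergel-bim}, which is exactly what the present paper cites for the heavy lifting (it defers parts~\eqref{it:filtrations-soergel-1}--\eqref{it:filtrations-soergel-2} to~\cite[Propositions~5.7 and~5.10]{soergel-bim} and part~\eqref{it:filtrations-soergel-3} to~\cite[Theorem~5.15]{soergel-bim}). Your identification of the technical crux --- reordering the subquotients of $M \otimes \B_s$ into a genuine $\Delta$-filtration when some $xs < x$ --- is accurate, and your strategy for~\eqref{it:filtrations-soergel-3} via the self-adjunction of $(-)\otimes\B_s$ and Remark~\ref{rema:Delta-nabla-filtration}\eqref{it:Hom-R} is the standard one.

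The one place where the paper's argument differs from yours, and is noticeably slicker, is the identity $\mathrm{ch}_\Delta = d \circ \mathrm{ch}_\nabla$. You propose to introduce the duality $\D$ and argue that it exchanges $\Delta$- and $\nabla$-filtrations; this is plausible but requires real work (checking that $\D$ carries the support filtration $\Gamma_{\geq i}$ to something comparable to $\Gamma_{\leq i}$ is not automatic). The paper instead observes that you have already done the work: your inductive step yields $\mathrm{ch}_\Delta(\BS(\uw)) = \uH_{s_1}\cdots\uH_{s_r}$ and likewise for $\mathrm{ch}_\nabla$, so both $\mathrm{ch}_\Delta$ and $d \circ \mathrm{ch}_\nabla$ are $\Z[\vv,\vv^{-1}]$-linear left inverses of the isomorphism of Theorem~\ref{theo:categorification} (note $\mathrm{ch}_\nabla(M(1)) = \vv^{-1}\mathrm{ch}_\nabla(M)$, so $d\circ\mathrm{ch}_\nabla$ is $\vv$-linear, and $d$ fixes each $\uH_s$). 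A left inverse of an isomorphism is its unique two-sided inverse, hence $\mathrm{ch}_\Delta = d\circ\mathrm{ch}_\nabla$. This bypasses the duality entirely and is the argument you should use.
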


\begin{proof}
Les points~\eqref{it:filtrations-soergel-1} et~\eqref{it:filtrations-soergel-2} sont d\'emontr\'es dans~\cite[Proposition 5.7 et Proposition 5.10]{soergel-bim}. (Les applications $\mathrm{ch}_\Delta$ et $\mathrm{ch}_\nabla$ sont not\'ees $h_\Delta$ et $h_\nabla$ dans~\cite{soergel-bim}. La formule $\mathrm{ch}_\Delta = d \circ \mathrm{ch}_\nabla$ d\'ecoule du fait que ces deux applications sont des inverses \`a gauche du morphisme du Th\'eor\`eme~\ref{theo:categorification} ; puisque ce morphisme est inversible elles doivent donc co\"incider.) Le point~\eqref{it:filtrations-soergel-3} est d\'emontr\'e dans~\cite[Theorem~5.15]{soergel-bim}.
\end{proof}

\begin{rema}
\label{rema:degre-Bw}
\begin{enumerate}
\item
\label{it:degre-Bw}
Puisque $\mathrm{ch}_\nabla$ est un morphisme d'alg\`ebres, de la Remarque~\ref{rema:Delta-nabla-filtration}\eqref{it:ch-Bs} on d\'eduit que pour tout mot $\uw=(s_1, \cdots, s_n)$ en $S$ on a
\begin{equation}
\label{eqn:ch-nabla-BS}
\mathrm{ch}_\nabla(\BS(\uw)) = \uH_{s_1} \cdots \uH_{s_n}.
\end{equation}
En particulier, pour $w \in W$, en utilisant ceci pour $\uw$ une expression r\'eduite de $w$, on voit que $\mathrm{ch}_\nabla(\B_w) \in \bigoplus_{y \leq w} \Z[\vv,\vv^{-1}] \cdot H_y$, et que le coefficient de $H_w$ dans la d\'ecomposition de $\mathrm{ch}_\nabla(\B_w)$ est $1$ ; il s'ensuit que $(\B_w)^{-\ell(w)} \neq 0$. Comme le degr\'e minimal en lequel $\BS(\uw)$ est non nul est $-\ell(w)$, et qu'il est de dimension $1$ en ce degr\'e,
ceci implique que le degr\'e minimal en lequel $\B_w$ est non nul est \'egalement $-\ell(w)$, et que pour 
tout choix d'inclusion scind\'ee $\B_w \hookrightarrow \BS(\uw)$, l'inclusion $(\B_w)^{-\ell(w)} \hookrightarrow \BS(\uw)^{-\ell(w)}$ est un isomorphisme (entre $k$-espaces vectoriels de dimension $1$).
\item
\label{it:factorisation}
Choisissons un raffinement $\preceq$ de l'ordre de Bruhat $\leq$ tel que $(W,\preceq)$ est isomorphe \`a $\Z_{\geq 0}$ muni de son ordre naturel. Alors pour tout $y \in W$ et tout $B \in \scB$, l'action de $R \otimes_k R$ sur $\Gamma_{\preceq y}(B)$ se factorise via $\cO(\mathrm{Gr}(\preceq y))$ (o\`u on note ``$\preceq y$'' pour $\{w \in W \mid w \preceq y\}$). En effet, on peut d\'emontrer cette propri\'et\'e par r\'ecurrence de la fa{\c c}on suivante. Supposons la propri\'et\'e connue pour un \'el\'ement $y$, et soit $z$ l'\'el\'ement suivant de $W$ (pour l'ordre $\preceq$). Soit $p_z \in R$ comme dans~\cite[Notation~6.5]{soergel-bim}. Comme $B$ est libre comme $R$-module \`a droite, pour montrer la propri\'et\'e pour $z$ il suffit de montrer que l'action de $R \otimes_k R$ sur tout \'el\'ement de $\Gamma_{\preceq z}(B) p_z = \Gamma_{\preceq z}(B p_z)$ se factorise par $\cO(\mathrm{Gr}(\preceq z))$. Mais d'apr\`es~\cite[Lemma~6.3 et Satz~6.6]{soergel-bim}, les morphismes naturels
\[
\Gamma_{\{z\}}(B) \to \Gamma_{\leq z}(B p_z) / \Gamma_{<z}(B p_z) \to \Gamma_{\preceq z}(B p_z) / \Gamma_{\preceq z}(B p_z)
\]
sont des isomorphismes; on a donc $ \Gamma_{\preceq z}(B p_z) = \Gamma_{\preceq y}(B) p_z \oplus \Gamma_{\{z\}}(B)$. Par hypoth\`ese de r\'ecurrence l'action de $R \otimes_k R$ sur le premier facteur se factorise par $\cO(\mathrm{Gr}(\preceq y))$, donc a fortiori par $\cO(\mathrm{Gr}(\preceq z))$. Quant au deuxi\`eme facteur, en utilisant~\cite[Proposition~5.9]{soergel-bim} on voit que $B \otimes_R \cO(\mathrm{Gr}(z))$ admet une $\nabla$-filtration, donc en particulier que $\Gamma_{\{z\}}(B)=\Gamma_{\leq 0} \bigl( B \otimes_R \cO(\mathrm{Gr}(z)) \bigr)$ est isomorphe \`a une somme de copies de d\'ecal\'es de $\nabla_z$, de sorte que l'action sur ce facteur se factorise \'egalement par $\cO(\mathrm{Gr}(\preceq z))$.
\end{enumerate}
\end{rema}

\subsection{Conjecture de Soergel}
\label{ss:conj-Soergel}

On continue avec les conventions du~\S\ref{ss:bimod-soergel}, en supposant de plus que $k$ est de caract\'eristique nulle. 
En utilisant~\eqref{eqn:ch-nabla-BS} (et l'autodualit\'e des bimodules $\BS(\uw)$ et $\B_w$ qu'on verra au~\S\ref{ss:formes-inv}), on peut v\'erifier (par r\'ecurrence sur $\ell(w)$) que les \'el\'ements $\mathrm{ch}_\nabla(\B_w)$ sont fixes par l'involution $d$ ; voir~\cite[Bemerkung~6.16]{soergel-bim}.
La conjecture suivante, propos\'ee par Soergel dans~\cite{soergel-bim}, donne une description compl\`ete de ces \'el\'ements.

\begin{conj}[``Conjecture de Soergel'']
\label{conj:soergel}
Pour tout $w \in W$, l'image inverse de $[\B_w]$ par l'isomorphisme du Th\'eor\`eme~{\rm \ref{theo:categorification}} est $\uH_w$; en d'autres termes on a
\[
\mathrm{ch}_\Delta(\B_w) = \uH_w.
\]
\end{conj}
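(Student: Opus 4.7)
La preuve, due \`a Elias--Williamson, proc\`ede par r\'ecurrence sur $\ell(w)$. Le cas $w=e$ est imm\'ediat puisque $\B_e=R$ et $\uH_e=H_e=\mathrm{ch}_\Delta(R)$. Pour $w \neq e$, on fixe une d\'ecomposition $w=ys$ avec $s \in S$ et $\ell(y)=\ell(w)-1$. Comme sugg\'er\'e au~\S\ref{ss:intro-resultats}, la conjecture ne peut pas \^etre \'etablie seule : on la d\'emontrera simultan\'ement avec le th\'eor\`eme de Lefschetz difficile et les relations de Hodge--Riemann pour $\B_w \otimes_R \R$ (par rapport \`a un \'el\'ement $\rho \in V^*$ strictement positif dans un c\^one appropri\'e), ainsi qu'avec les analogues ``d\'eform\'es'' de ces relations pour l'op\'erateur $L^{z,s}_\zeta$ sur $\B_z\B_s$ (lorsque $\ell(zs)>\ell(z)$), pour tous les \'el\'ements de longueur $<\ell(w)$.

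Pour d\'eduire la conjecture pour $w$, je comparerais $[\B_y \B_s]$ \`a $\uH_y \uH_s = \uH_w + \sum_z \mu_{y,s}^z \uH_z$ (avec $\ell(z)<\ell(w)$). D'apr\`es la Proposition~\ref{prop:filtrations-soergel}\eqref{it:filtrations-soergel-2} et l'hypoth\`ese de r\'ecurrence, on a $\mathrm{ch}_\Delta(\B_y\B_s)=\uH_y \uH_s$, et le Lemme~\ref{lemm:BwBs} assure que $\B_w$ appara\^it avec multiplicit\'e $1$ dans $\B_y\B_s$. La conjecture pour $w$ se ram\`ene donc \`a contr\^oler la multiplicit\'e des $\B_z(n)$ dans $\B_y\B_s$ pour $\ell(z) \leq \ell(y)$. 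Par la formule~\eqref{eqn:Hom-fomula}, ces multiplicit\'es se lisent sur les espaces $\Hom_{\scB}(\B_z, \B_y\B_s)$, et leur contr\^ole revient \`a la non-d\'eg\'en\'erescence de formes bilin\'eaires sym\'etriques naturelles sur ces espaces.

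L'observation cl\'e d'Elias--Williamson est que $\Hom_{\scB}(\B_z, \B_y \B_s)$ se plonge isom\'etriquement (pour des formes convenables) dans le sous-espace primitif de $((\B_y\B_s) \otimes_R \R)^{-\ell(z)}$ pour l'op\'erateur $\rho \cdot \id$. Comme la restriction d'une forme d\'efinie \`a un sous-espace reste non d\'eg\'en\'er\'ee, tout d\'ecoulera d'un \'enonc\'e de Hodge--Riemann pour $\B_y\B_s$ ; ce dernier impliquera \'egalement les propri\'et\'es~\eqref{it:intro-prop-2} et~\eqref{it:intro-prop-3} pour $\B_w$ comme facteur direct. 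Pour obtenir cet \'enonc\'e, on consid\`ere la famille
\[
L^{y,s}_\zeta := \rho \cdot \id_{\B_y \B_s} + \id_{\B_y} \otimes (\zeta \rho \cdot \id_{\B_s}), \quad \zeta \in \R_{\geq 0}.
\]
Pour $\zeta \gg 0$, les relations de Hodge--Riemann pour $\B_y$ (connues par r\'ecurrence) fournissent le r\'esultat. La signature d'une famille continue de formes bilin\'eaires sym\'etriques non d\'eg\'en\'er\'ees \'etant localement constante, il suffit ensuite d'\'etablir que $L^{y,s}_\zeta$ v\'erifie le th\'eor\`eme de Lefschetz difficile pour tout $\zeta \geq 0$.

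L'obstacle principal est cette derni\`ere \'etape, car l'analogue alg\'ebrique du th\'eor\`eme de Lefschetz faible utilis\'e par de Cataldo--Migliorini fait compl\`etement d\'efaut. La solution consiste \`a factoriser $\rho \cdot \id_{\B_y}$ sous la forme $\B_y \to D(1) \to \B_y(2)$, o\`u $D$ est une somme de $\B_z(n)$ avec $\ell(z)<\ell(y)$, \`a l'aide de diff\'erentielles apparaissant dans les complexes de Rouquier. Les hypoth\`eses de r\'ecurrence (appliqu\'ees aux $\B_z$ ainsi qu'aux op\'erateurs $L^{z,s}_\zeta$) permettent alors de contr\^oler le noyau des puissances de $L^{y,s}_\zeta$. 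Il est crucial que l'\'enonc\'e de Hodge--Riemann d\'eform\'e pour $L^{z,s}_\zeta$ (et non pas seulement pour $\zeta=0$) figure dans l'hypoth\`ese de r\'ecurrence : c'est ce qui fait que toutes les propri\'et\'es s'entrelacent et doivent \^etre \'etablies conjointement, dans un ordre soigneusement orchestr\'e.
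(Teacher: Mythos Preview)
Ta proposition est correcte et suit essentiellement la m\^eme d\'emarche que celle d'Elias--Williamson expos\'ee dans l'article : r\'ecurrence simultan\'ee sur la conjecture de Soergel, le th\'eor\`eme de Lefschetz difficile et les relations de Hodge--Riemann (y compris leur version d\'eform\'ee pour $L^{z,s}_\zeta$), r\'eduction via le plongement isom\'etrique de $\Hom_{\scB}(\B_z,\B_y\B_s)$ dans la partie primitive, argument de continuit\'e de signature, et factorisation de l'op\'erateur de Lefschetz \`a l'aide des complexes de Rouquier. Une pr\'ecision mineure : dans la factorisation $\B_y \to D(1) \to \B_y(2)$, le bimodule $D$ est en fait une somme de $\B_z$ \emph{sans d\'ecalage} (c'est-\`a-dire ``pervers''), ce qui est crucial pour que les signes dans les relations de Hodge--Riemann sur $\overline{D}$ soient coh\'erents.
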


Dans le cas consid\'er\'e au Point~\eqref{it:ref-fidele-cartan} de la Proposition~\ref{prop:exemples-ref-fidele}, cette conjecture peut se 
d\'emontrer en utilisant la g\'eom\'etrie de la vari\'et\'e de drapeaux du groupe de Kac--Moody associ\'e; voir le~\S\ref{ss:crystallog} ci-dessous. Elle a \'egalement \'et\'e d\'emontr\'ee alg\'ebriquement dans le cas $|S|=2$ par Soergel (voir~\cite{soergel-bim}) et dans le cas o\`u $m_{st}=\infty$ pour tous $s,t \in S$ distincts par Fiebig (voir~\cite{fiebig}) puis Libedinsky (dans un travail non publi\'e). Le cas g\'en\'eral (pour une repr\'esentation r\'eelle satisfaisant une condition technique) a \'et\'e obtenu par Elias--Williamson~\cite{ew1} ; leur preuve est l'objet principal de cet expos\'e.

Dans la Partie~\ref{sec:global} on utilisera de fa{\c c}on cruciale l'observation suivante. Si $x,y \in W$ et si la conjecture de Soergel est connue pour $x$ et $y$ (c'est-\`a-dire si $\mathrm{ch}_\Delta(\B_x) = \uH_x$ et $\mathrm{ch}_\Delta(\B_y) = \uH_y$), alors la formule~\eqref{eqn:Hom-fomula} implique que
\begin{equation}
\label{eqn:Hom-BxBy}
\Hom_{\scB}(\B_x, \B_y) = \begin{cases}
k & \text{si $x=y$;} \\
0 & \text{sinon}
\end{cases}
\quad \text{et que} \quad
\Hom_{\scB}(\B_x, \B_y(n))=0 \text{ si $n<0$.}
\end{equation}

\subsection{Le cas des syst\`emes de Coxeter cristallographiques}
\label{ss:crystallog}

Soit $A$ une matrice de Cartan g\'en\'eralis\'ee sym\'etrisable, soit $(W,S)$ le syst\`eme de Coxeter associ\'e, et soit $\frh$ une r\'ealisation de $A$ sur $\R$ au sens de Kac (voir le~\S\ref{ss:ref-fidele}). D'apr\`es la Proposition~\ref{prop:exemples-ref-fidele}\eqref{it:ref-fidele-cartan}, la repr\'esentation de $W$ sur $\frh^*$ est r\'eflexion fid\`ele. Puisque $A$ est sym\'etrisable il existe un isomorphisme $W$-\'equivariant $\frh \cong \frh^*$ (voir~\cite[Proposition~1.5.2]{kumar}), de sorte que la repr\'esentation de $W$ sur $\frh$ est \'egalement r\'eflexion fid\`ele. On notera $\scB$ la cat\'egorie des bimodules de Soergel associ\'ee \`a $(W,S)$ et \`a son action sur $\frh$.

Dans ce cadre, cette cat\'egorie peut se d\'ecrire g\'eom\'etriquement de la fa{\c c}on suivante. \`A $A$ on peut associer un groupe de Kac--Moody $\bG$ avec un sous-groupe de Borel $\bB$. (Ici on consid\`ere le groupe de Kac--Moody ``minimal'' construit dans~\cite[\S 7.4]{kumar}, de sorte que $\bG$ est un ind-sch\'ema en groupes affine, cf.~\cite[Theorem~7.4.14]{kumar}.) On consid\`ere la cat\'egorie d\'eriv\'ee $\bB$-\'equivariante $\Db_{\bB}(\bG/\bB, \R)$ \`a coefficients dans $\R$ au sens de Bernstein--Lunts~\cite{bl} ; cette cat\'egorie admet un bifoncteur de \emph{convolution} qui en fait une cat\'egorie mono\"idale. On notera $\widetilde{\scB}$ la sous-cat\'egorie pleine des complexes semi-simples (c'est-\`a-dire des sommes directes de d\'ecal\'es cohomologiques de complexes de cohomologie d'intersection de $\bB$-orbites) dans $\Db_{\bB}(\bG/\bB, \R)$. Le th\'eor\`eme de d\'ecomposition (dans sa version \'equivariante) assure que cette sous-cat\'egorie est stable par convolution. On notera $[\widetilde{\scB}]$ le groupe de Grothendieck scind\'e de $\widetilde{\scB}$.

L'isomorphisme de Borel fournit un isomorphisme d'alg\`ebres gradu\'ees
\[
R = \mathrm{Sym}(\frh^*) \simto \mathsf{H}^\bullet_{\bB}(\mathrm{pt}; \R).
\]
En utilisant cette identification, pour tout $\mathcal{F}$ dans $\Db_{\bB}(\bG/\bB, \R)$, l'espace vectoriel gradu\'e $\mathsf{H}^\bullet_{\bB}(\bG/\bB, \mathcal{F})$ admet une structure naturelle de $R$-bimodule gradu\'e. On obtient ainsi un foncteur de $\Db_{\bB}(\bG/\bB, \R)$ vers la cat\'egorie des $R$-bimodules gradu\'es, dont on notera $\mathbb{H}$ la restriction \`a $\widetilde{\scB}$.

Le r\'esultat suivant est d\'emontr\'e par Soergel dans~\cite{soergel-philosophy} dans le cas o\`u $A$ est de type fini (c'est-\`a-dire quand $\bG$ est un groupe alg\'ebrique semisimple). Le cas g\'en\'eral est trait\'e dans~\cite{haerterich} et~\cite{by}.

\begin{theo}
\label{theo:H-equiv}
Le foncteur $\mathbb{H}$ induit une \'equivalence de cat\'egories mono\"idales $\widetilde{\scB} \simto \scB$.
\end{theo}

Via l'\'equivalence du Th\'eor\`eme~\ref{theo:H-equiv}, l'application $\mathrm{ch}_\Delta$ s'identifie au morphisme
\[
\mathrm{ch}_\bG : \left\{
\begin{array}{ccc}
[\widetilde{\scB}] & \to & \cH_{(W,S)} \\[2pt]
[\mathcal{F}]
& \mapsto & \sum_{\substack{i \in \Z \\ y \in W}} \dim_{\R} \mathsf{H}^i(\mathcal{F}_{y\bB}) \vv^{-i-\ell(y)} H_y
\end{array}
\right. .
\]
Dans ce cas, la Conjecture~\ref{conj:soergel} d\'ecoule donc du fait que
\[
\mathrm{ch}_{\bG}(\mathcal{IC}(\bB w \bB/\bB, \underline{\R})) = \underline{H}_w
\]
(un r\'esultat d\^u \`a Kazhdan--Lusztig ; voir \'egalement~\cite{springer} pour une preuve plus simple).

\subsection{Modules de Soergel}
\label{ss:modules-Soergel}

Le r\'esultat technique d\'emontr\'e dans ce paragraphe sera utilis\'e au~\S\ref{ss:preuve-KL}, mais n'est pas utile \`a la compr\'ehension de la Partie~\ref{sec:global}.

On suppose que $W$ est fini. Si $M,N$ sont des $R$-bimodules, resp.~des $R$-modules, on notera $\Hom_{(R,R)}(M,N)$, resp.~$\Hom_R(M,N)$, le $R$-bimodule, resp.~$R$-module, des morphismes de $R$-bimodules, resp.~$R$-modules, non n\'ecessairement gradu\'es de $M$ vers $N$. Si $M$ et $N$ sont des bimodules de Soergel, le morphisme naturel
\[
\bigoplus_{n \in \Z} \Hom_{\scB}(M,N(n)) \to \Hom_{(R,R)}(M,N)
\]
est un isomorphisme. Un \'enonc\'e similaire est vrai \'egalement pour les $R$-modules gradu\'es de type fini.

Dans le cas particulier des groupes de Weyl des groupes alg\'ebriques r\'eductifs, l'\'enonc\'e suivant est un cas particulier de~\cite[Proposition~8]{soergel-HC}. (La preuve de~\cite{soergel-HC} repose sur la th\'eorie des repr\'esentations et un argument de ``d\'eformation''. Alternativement, on peut \'egalement d\'emontrer ce r\'esultat dans le cas des groupes de Weyl en utilisant la g\'eom\'etrie de la vari\'et\'e de drapeaux associ\'ee.) La preuve alg\'ebrique donn\'ee ci-dessous (dont l'existence est mentionn\'ee dans~\cite[Remark~3.2]{williamson-torsion}) 
est due \`a Soergel.

\begin{prop}
\label{prop:modules-Soergel}
Pour tous bimodules de Soergel $B,B'$, le morphisme naturel
\[
\Hom_{(R,R)}(B,B') \otimes_R k \to \Hom_R(B \otimes_R k, B' \otimes_R k)
\]
est un isomorphisme.
\end{prop}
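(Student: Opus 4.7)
Ma stratégie consiste à utiliser les $\Delta$- et $\nabla$-filtrations fournies par la Proposition~\ref{prop:filtrations-soergel} et à procéder par dévissage.

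Premièrement, par adjonction tensor-Hom, on identifie canoniquement
\[
\Hom_R(B \otimes_R k, B' \otimes_R k) \cong \Hom_{(R,R)}(B, B' \otimes_R k),
\]
où $B' \otimes_R k$ est vu comme $R$-bimodule dont l'action droite de $R$ se factorise par l'augmentation $R \twoheadrightarrow k$. En effet, tout morphisme de bimodules $B \to B' \otimes_R k$ annule $B \cdot R_{>0}$ et se factorise donc par $B \otimes_R k$, et réciproquement. Sous cette identification, l'application naturelle de la proposition correspond au morphisme induit par la surjection canonique $B' \twoheadrightarrow B' \otimes_R k$. La proposition se ramène donc à montrer que le foncteur $\Hom_{(R,R)}(B,-)$ commute avec $-\otimes_R k$ lorsqu'évalué en $B'$.

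Deuxièmement, je traiterai le cas de base $B' = \nabla_y$, pour lequel $\nabla_y \otimes_R k \cong k$ est concentré en degré $-\ell(y)$ à action triviale. On calcule alors $\Hom_{(R,R)}(B, \nabla_y)$ par dévissage le long de la $\Delta$-filtration de $B$, en s'appuyant sur le fait élémentaire que $\Hom_{(R,R)}(\Delta_x, \nabla_y)$ vaut $R$ (à décalage près, avec action droite tordue par $x$) si $x = y$, et est nul sinon. Il en résulte que $\Hom_{(R,R)}(B, \nabla_y)$ est libre comme $R$-module à droite, et son tensorisé par $k$ correspond directement à $\Hom_R(B \otimes_R k, \nabla_y \otimes_R k)$ par comparaison terme à terme.

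Pour $B'$ général, je procéderai par récurrence sur la longueur de la $\nabla$-filtration. Une suite exacte courte $0 \to B'_{<} \to B' \to \bar{B'} \to 0$ issue de la filtration (avec $\bar B'$ une somme directe de $\nabla_y(n)$) fournit, par application de $\Hom_{(R,R)}(B,-)$ puis de $-\otimes_R k$ d'une part, et de $\Hom_R(B \otimes_R k, (-) \otimes_R k)$ d'autre part, deux diagrammes reliés par des morphismes naturels, auxquels le lemme des cinq s'applique pour conclure l'étape inductive.

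L'obstacle principal sera d'établir que les suites exactes longues ainsi obtenues se coupent proprement en suites exactes courtes après tensorisation par $k$, ce qui revient essentiellement à prouver la liberté de $\Hom_{(R,R)}(B, B')$ comme $R$-module à droite et à contrôler certains termes $\mathrm{Ext}^1$ ou $\mathrm{Tor}_1$. Cette liberté reflète le fait que les bimodules de Soergel admettent simultanément des $\Delta$- et $\nabla$-filtrations (Proposition~\ref{prop:filtrations-soergel}\eqref{it:filtrations-soergel-1}) et sont libres sur $R$ des deux côtés (Remarque~\ref{rema:Delta-nabla-filtration}\eqref{it:filtration-libre}), dans l'esprit de la théorie des modules basculants dans une catégorie de plus haut poids.
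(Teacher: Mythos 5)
Votre dévissage bute sur une difficulté essentielle que vous sous-estimez : le morphisme naturel dont la Proposition affirme qu'il est un isomorphisme n'est \emph{pas} un isomorphisme lorsque $B'$ est remplacé par un sous-quotient $\nabla_y$, de sorte que le cas de base de votre récurrence est faux. Testez-le sur l'exemple $B=R=\Delta_e$ et $B'=\nabla_y$ avec $y\neq e$. D'une part $\Hom_{(R,R)}(R,\nabla_y)=0$ (un morphisme de bimodules $R\to\nabla_y$ envoie $1$ sur un élément $v$ vérifiant $rv=(y\cdot r)v$ pour tout $r\in R$, donc $v=0$ dès que $y\neq e$ puisque $R$ est intègre). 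D'autre part $\nabla_y\otimes_R k\cong k$ avec action triviale de $R^{>0}$ à gauche, et $R\otimes_R k=k$, donc $\Hom_R(k,k)=k\neq 0$. L'application $0\to k$ n'est pas un isomorphisme. Votre affirmation que le membre de gauche ``correspond directement'' au membre de droite ``par comparaison terme à terme'' est donc erronée : une fois qu'on a remplacé $B'$ par ses sous-quotients $\nabla$, le statement qu'on dévisse cesse d'être vrai.

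Ce n'est pas un accident : la Proposition~\ref{prop:modules-Soergel} est \emph{fausse} si $W$ est infini (voir la remarque qui suit la preuve dans le texte, qui cite des contre-exemples de Patimo), et toute preuve correcte doit donc utiliser la finitude de $W$ de façon essentielle. Or votre argument -- adjonction tensor-Hom, filtrations $\Delta$ et $\nabla$, suites exactes longues et lemme des cinq -- ne fait jamais intervenir cette finitude, ce qui est déjà un signal d'alarme. La preuve du texte procède tout autrement : elle se ramène d'abord au cas $B=R$ via~\cite[Proposition~5.10]{soergel-bim}, ce qui transforme les deux membres en sous-objets concrets de $B'$ et $B'\otimes_R k$ respectivement ($\Gamma_{\leq 0}(B')\otimes_R k$ d'un côté, le ``socle'' $\{v\in B'\otimes_R k\mid R^{>0}v=0\}$ de l'autre), puis établit l'égalité de ces deux sous-espaces en exhibant, pour tout $y\neq e$, un élément $c_y\in(R\otimes_k R)^{2\ell(y)}$ de degré strictement positif dont la multiplication détecte la composante de longueur $\ell(y)$. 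La construction de $c_y$ repose sur les opérateurs de Demazure $\partial_{yw_0}$ appliqués à un élément $c_{w_0}$ associé à l'élément le plus long $w_0$, d'où l'usage indispensable de la finitude de $W$. Votre dévissage ``terme à terme'' contourne précisément l'endroit où ce travail doit être fait.
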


\begin{proof}
En utilisant~\cite[Proposition~5.10]{soergel-bim} on se ram\`ene au case $B=R$, et on note alors $B$ pour $B'$. Rappelons que $B$ poss\`ede une $\nabla$-filtration ; voir la Proposition~\ref{prop:filtrations-soergel}. En utilisant la Remarque~\ref{rema:Delta-nabla-filtration}\eqref{it:Hom-R} dans le premier cas, on a des isomorphismes canoniques
\[
\Hom_{(R,R)}(R,B) \cong \Gamma_{\leq 0}(B), \quad \Hom_R(k, B \otimes_R k) \cong \{v \in B \otimes_R k \mid R^{>0} \cdot v=0\},
\]
o\`u $R^{>0} := \bigoplus_{i>0} R^i$.
L'inclusion $\Gamma_{\leq 0}(B) \hookrightarrow B$ est scind\'ee comme morphisme de $R$-modules \`a droite (voir la Remarque~\ref{rema:Delta-nabla-filtration}\eqref{it:filtration-libre}), ce qui implique que notre morphisme est injectif. Pour conclure, il faut donc montrer que tout \'el\'ement de $B \otimes_R k$ annul\'e par $R^{>0}$ appartient au sous-espace $\Gamma_{\leq 0}(B) \otimes_R k \subset B \otimes_R k$.

Soit donc $b \in B \otimes_R k$ non nul, et soit $i \in \Z_{\geq 0}$ l'unique entier tel que $b$ appartient \`a $\Gamma_{\leq i}(B) \otimes_R k$ mais pas \`a $\Gamma_{\leq i-1}(B) \otimes_R k$. On va montrer que si $i \neq 0$, alors $b$ n'est pas annul\'e par $R^{>0}$. Pour cela, choisissons une pr\'eimage $a$ de $b$ dans $\Gamma_{\leq i}(B)$. \'Ecrivons le quotient $M=\Gamma_{\leq i}(B) / \Gamma_{\leq i-1}(B)$ comme une somme directe
\[
M= \bigoplus_{\substack{y \in W \\ \ell(y)=i}} M_y
\]
o\`u chaque $M_y$ est une somme de copies de d\'ecal\'es de $\nabla_y$, et consid\'erons la d\'ecomposition
\[
\overline{a} = \sum_{y} \overline{a}_y
\]
de l'image $\overline{a}$ de $a$ dans $M$ selon ces facteurs directs. Par hypoth\`ese l'image de $\overline{a}$ dans $M \otimes_R k$ est non nulle, donc il existe $y$ tel que l'image de $\overline{a}_y$ dans $M \otimes_R k$ est non nulle.

Soit maintenant $w_0 \in W$ l'\'el\'ement de plus grande longueur, et
consid\'erons un \'el\'ement $c'_{w_0}$ de $(R \otimes_{R^W} R)^{2\ell(w_0)}$ non nul et qui s'annule sur chaque $\mathrm{Gr}(x)$ avec $x \neq w_0$, voir~\cite[Proposition~2]{soergel-HC}. Choisissons \'egalement une pr\'eimage $c_{w_0}$ de $c'_{w_0}$ dans $R \otimes_k R$. Alors, si les op\'erateurs de Demazure $(\partial_x)_{x \in W}$ sont d\'efinis comme dans~\cite[\S 2.2]{soergel-HC}, l'\'el\'ement $c_y := \partial_{y w_0}(c_{w_0}) \in (R \otimes_k R)^{2\ell(y)}$ s'annule sur tous les $\mathrm{Gr}(z)$ avec $z \not \geq y$ d'apr\`es~\cite[Lemma~5]{soergel-HC}.
En particulier, au vu de la Remarque~\ref{rema:degre-Bw}\eqref{it:factorisation}, le produit $c_y \cdot a$ ne d\'epend que de $\overline{a}_y$.
D'autre part, il est remarqu\'e dans~\cite[Bemerkung~6.7]{soergel-bim} que la multiplication par $c_y$ induit un isomorphisme (non gradu\'e) $\Gamma_{\leq y}(B) / \Gamma_{< y}(B) \simto \Gamma_{\{y\}}(B)$. Donc l'image de $c_y \cdot a$ dans $\Gamma_{\{y\}}(B) \otimes_R k$ est non nulle. Finalement, comme not\'e dans la Remarque~\ref{rema:degre-Bw}\eqref{it:factorisation}, le bimodule $B \otimes_R \cO(\mathrm{Gr}(y))$ admet une $\nabla$-filtration, de sorte que l'inclusion $\Gamma_{\{y\}}(B) \hookrightarrow B$ est scind\'ee comme morphisme de $R$-modules \`a droite, et donc le morphisme naturel $\Gamma_{\{y\}}(B) \otimes_R k \to B \otimes_R k$ est injectif. On a finalement montr\'e que l'image de $c_y \cdot a$ dans $B \otimes_R k$ est non nulle, ce qui ach\`eve la preuve puisque cette image coincide avec $\overline{c}_y \cdot b$, o\`u $\overline{c}_y$ est l'image de $c_y$ dans $R = (R \otimes_k R) \otimes_R k$, et donc un \'el\'ement $R^{>0}$ (puisque $i=\ell(y)$ est non nul par hypoth\`ese).
\end{proof}

Il d\'ecoule en particulier de la Proposition~\ref{prop:modules-Soergel} que si $w \in W$ l'anneau des endomorphismes de $\B_w \otimes_R k$ comme $R$-module gradu\'e est un quotient de $\End_{\scB}(\B_w)$, donc est un anneau local. Donc ce module gradu\'e est ind\'ecomposable. En appliquant des r\'esultats g\'en\'eraux sur les alg\`ebres de dimension finie (voir~\cite{gg}), on obtient m\^eme que $\B_w \otimes_R k$ est ind\'ecomposable comme $R$-module \emph{non gradu\'e}. En particulier, il s'ensuit que les modules $\B_w \otimes_R k$ sont exactement les $R$-modules (non gradu\'es) qu'on peut obtenir comme facteurs directs ind\'ecomposables des modules de la forme
\[
\BS(\uw) \otimes_R k = R \otimes_{R^{s_1}} R \otimes_{R^{s_2}} \cdots \otimes_{R^{s_n}} k
\]
o\`u $\uw=(s_1, \cdots, s_n)$ est un mot en $S$.

\begin{rema}
 La preuve ci-dessus utilise de fa{\c c}on cruciale l'hypoth\`ese que $W$ est fini. En fait, la Proposition~\ref{prop:modules-Soergel} est fausse en g\'en\'eral si $W$ est infini. Des exemples explicites de bimodules de Soergel $B$ ind\'ecomposables tels que $B \otimes_R k$ est d\'ecomposable ont \'et\'e exhib\'es par L.~Patimo.
\end{rema}

\section{La th\'eorie de Hodge des bimodules de Soergel : cas global}
\label{sec:global}

Dans cette partie, le corps de base de tous les espaces vectoriels consid\'er\'es sera (tacitement) $\R$. 

\subsection{Hypoth\`eses}

On fixe un syst\`eme de Coxeter $(W,S)$ et une repr\'esentation $V$ de $W$ qui est r\'eflexion fid\`ele. On fixe \'egalement des \'el\'ements $(\alpha_s^\vee)_{s \in S}$ de $V$ et $(\alpha_s)_{s \in S}$ de $V^*$ tels que $s \cdot v = v-\langle \alpha_s, v \rangle \alpha_s^\vee$ pour tous $s \in S$ et $v \in V$. (De tels \'el\'ements existent puisque chaque $s \in S$ agit par une r\'eflexion sur $V$.) On suppose qu'il existe un \'el\'ement $\rho \in V^*$ tel que pour $s \in S$ et $w \in W$ on a
\begin{equation}
\label{eqn:rho-positivite}
\langle w(\rho), \alpha_s^\vee \rangle > 0 \quad \Leftrightarrow \quad sw>w.
\end{equation}
(En particulier, si $w=1$, ceci implique que $\langle \rho, \alpha_s^\vee \rangle >0$ pour tout $s \in S$.) 

Dans les situations consid\'er\'ees dans la Proposition~\ref{prop:exemples-ref-fidele}, dans le cas~\eqref{it:ref-fidele-soergel} on peut poser $\alpha_s^\vee = e_s$, $\alpha_s = e_s^\vee$, et prendre pour $\rho$ tout \'el\'ement tel que $\langle \rho, e_s \rangle >0$ pour tout $s \in S$ (voir~\cite[\S V.4.4]{bourbaki}, en remarquant que $\mathrm{Vect}(\{e_s : s \in S\})$ est isomorphe comme $W$-repr\'esentation \`a la repr\'esentation g\'eom\'etrique de $W$) ; dans le cas~\eqref{it:ref-fidele-cartan} on peut prendre pour les $\alpha_s$, resp.~$\alpha_s^\vee$, les racines simples, resp.~coracines simples, et pour $\rho$ tout \'el\'ement tel que $\langle \rho, \alpha_s^\vee \rangle >0$ pour tout $s \in S$ (voir~\cite[Lemma~1.3.13]{kumar}).

\subsection{Alg\`ebre lin\'eaire ``de Hodge''}

Dans cette sous-partie on fixe une terminologie qui permet d'\'enoncer de fa{\c c}on ``compacte'' les r\'esultats principaux de~\cite{ew1}. Cette terminologie est motiv\'ee par la th\'eorie de Hodge des vari\'et\'es projectives complexes ; voir~\cite[\S\S 2--3]{williamson-survey} pour plus de d\'etails. (Notons toutefois que les d\'efinitions consid\'er\'ees ici sont l\'eg\`erement diff\'erentes de celles consid\'er\'ees dans~\cite{williamson-survey}.)

On appellera \emph{donn\'ee de Lefschetz} une paire $(V,L)$ o\`u $V$ est un espace vectoriel gradu\'e de dimension finie concentr\'e soit en degr\'es pairs, soit en degr\'es impairs, et $L : V \to V(2)$ est une application lin\'eaire gradu\'ee. On dira que $(V,L)$ \emph{v\'erifie le th\'eor\`eme de Lefschetz difficile} si, pour tout $i \geq 0$, l'application lin\'eaire $L^{\circ i} : V^{-i} \to V^i$ est un isomorphisme. Dans ce cas, en posant, pour tout $i \geq 0$,
\[
P_L^{-i} := \{v \in V^{-i} \mid L^{\circ (i+1)}(v)=0\},
\]
on obtient un isomorphisme canonique de $\R[L]$-modules gradu\'es
\[
\bigoplus_{i \geq 0} \R[L]/L^{i+1} \otimes_\R P_L^{-i} \xrightarrow{\sim} V.
\]
(Le sous-espace $P_L^{-i} \subset V^{-i}$ est appel\'e sous-espace des \emph{\'el\'ements primitifs} de degr\'e $-i$.)

On appellera \emph{donn\'ee de Hodge--Riemann} un triplet $(V,L,\langle -,- \rangle)$ o\`u $(V,L)$ est une donn\'ee de Lefschetz et $\langle -,- \rangle : V \times V \to \R$ est une forme bilin\'eaire sym\'etrique non d\'eg\'en\'er\'ee gradu\'ee (c'est-\`a-dire telle que $\langle V^i, V^j \rangle =0$ si $i+j \neq 0$) pour laquelle $L$ est autoadjoint (c'est-\`a-dire v\'erifie $\langle Lx, y \rangle = \langle x, Ly \rangle$ pour tous $x,y \in V$). On dira que $(V,L,\langle -,- \rangle)$ \emph{v\'erifie les relations de Hodge--Riemann}\footnote{Dans~\cite{ew1}, les auteurs autorisent les signes ``oppos\'es'' \`a ceux consid\'er\'es ici dans les relations de Hodge--Riemann. Ils parlent alors de ``signes standard'' pour les conditions consid\'er\'ees dans ces notes.} si, en notant $\min$ le degr\'e minimal en lequel $V$ est non nul, pour tout $i \in 2\Z_{\geq 0}$ tel que $\min+i\leq 0$, la restriction de la forme bilin\'eaire sym\'etrique sur $V^{\min+i}$ d\'efinie par
\begin{equation*}
(x,y) \mapsto \langle x,L^{\circ (-\min-i)} (y) \rangle
\end{equation*}
\`a $P^{\min+i}_L =\{v \in V^{\min+i} \mid L^{\circ (-\min-i+1)}(v)=0\}$ est $(-1)^{\frac{i}{2}}$-d\'efinie.

Les relations de Hodge--Riemann sont ``plus fortes'' que le th\'eor\`eme de Lefschetz difficile, au sens suivant.

\begin{lemm}
\label{lemm:HR-hL}
Soit $(V,L,\langle -,- \rangle)$ une donn\'ee de Hodge--Riemann qui v\'erifie les relations de Hodge--Riemann, et supposons que $\dim(V^i) = \dim(V^{-i})$ pour tout $i \in \Z_{\geq 0}$. Alors $(V,L)$ v\'erifie le th\'eor\`eme de Lefschetz difficile.
\end{lemm}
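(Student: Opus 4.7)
Le plan consiste à ramener le théorème de Lefschetz difficile à une simple question d'injectivité : grâce à l'hypothèse $\dim(V^{-k}) = \dim(V^k)$, il suffit de montrer que $L^{\circ k} : V^{-k} \to V^k$ est injective pour tout $k \geq 0$. Si $k > -\min$, alors $V^{-k} = 0$ et l'assertion est triviale. On peut donc se restreindre au cas $0 \leq k \leq -\min$, et poser $j := -\min - k$ de sorte que $-k = \min + j$ avec $j \geq 0$. Comme $V$ est concentré en degrés d'une seule parité, l'espace $V^{\min + j}$ est nul dès que $j$ est impair (auquel cas il n'y a encore rien à démontrer) ; on peut donc supposer $j \in 2\Z_{\geq 0}$, ce qui correspond exactement à l'indexation des relations de Hodge--Riemann.

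L'observation clé est alors presque immédiate : tout élément $v \in V^{-k}$ du noyau de $L^{\circ k}$ appartient automatiquement au sous-espace primitif $P_L^{-k}$, puisque $L^{\circ (k+1)}(v) = L(L^{\circ k}(v)) = 0$. La forme bilinéaire symétrique $(x,y) \mapsto \langle x, L^{\circ k}(y) \rangle$ sur $V^{-k}$ s'annule alors en $(v,v)$. Or les relations de Hodge--Riemann garantissent que sa restriction à $P_L^{-k}$ est $(-1)^{j/2}$-définie, donc en particulier définie ; par la propriété caractéristique des formes définies, on en déduit $v = 0$, ce qui établit l'injectivité souhaitée.

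La preuve ne présente donc pas d'obstacle technique sérieux : toute sa substance réside dans la combinaison de l'inclusion $\ker(L^{\circ k}) \subset P_L^{-k}$ et du fait qu'une forme bilinéaire symétrique définie est non isotrope. On peut d'ailleurs remarquer que l'injectivité de $L^{\circ k}$ s'obtient ici sans utiliser l'hypothèse sur les dimensions ; celle-ci n'intervient que pour en déduire la bijectivité.
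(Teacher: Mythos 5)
Votre preuve est correcte, et elle emprunte un chemin sensiblement plus direct que celui de l'article. Le papier procède par récurrence descendante sur $j$ : il établit que la forme de Lefschetz $(x,y)\mapsto\langle x,L^{\circ j}(y)\rangle$ est non dégénérée sur tout $V^{-j}$, en exploitant la décomposition $V^{-j}=L(V^{-j-2})\oplus P_L^{-j}$, le fait qu'elle est orthogonale, et que $L$ induit une isométrie de $V^{-j-2}$ sur $L(V^{-j-2})$ ; la non-dégénérescence donne alors $\ker(L^{\circ j})=0$. Vous court-circuitez toute la récurrence en observant que $\ker(L^{\circ k})\subseteq P_L^{-k}$ \emph{automatiquement} (sans hypothèse de Lefschetz préalable), et que la forme de Lefschetz est identiquement nulle sur ce noyau, donc qu'une forme définie sur $P_L^{-k}$ force $\ker(L^{\circ k})=0$. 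Ce que la version de l'article achète en contrepartie, c'est l'énoncé plus fort de non-dégénérescence de la forme de Lefschetz sur \emph{tout} $V^{-j}$, qui prépare directement le lemme suivant du texte (caractérisation des relations de Hodge--Riemann par les signatures), lequel repose lui aussi sur cette décomposition primitive. Votre remarque finale — que l'injectivité seule ne demande pas l'hypothèse sur les dimensions — est également juste et bienvenue.
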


\begin{proof}
Nous allons d\'emontrer par r\'ecurrence descendante sur $j$ que, pour tout $j \geq 0$, la forme bilin\'eaire sym\'etrique sur  $V^{-j}$ d\'efinie par $(x,y) \mapsto \langle x,L^{\circ j} (y) \rangle$ est non d\'eg\'en\'er\'ee. Cette propri\'et\'e implique que $L^{\circ j} : V^{-j} \to V^j$ est injective, et donc un isomorphisme d'apr\`es nos hypoth\`eses.

Le cas initial est $j=-\min$ (o\`u $\min$ est le degr\'e minimal en lequel $V$ est non nul), qui d\'ecoule des relations de Hodge--Riemann en degr\'e $\min$ puisque $L^{\circ (-\min+1)}=0$. Supposons maintenant le r\'esultat d\'emontr\'e pour $j+2$ (c'est-\`a-dire en degr\'e $-j-2$). Puisque $L^{\circ (j+2)} : V^{-j-2} \to V^{j+2}$ est un isomorphisme, $L$ est injectif sur $V^{-j-2}$ et on a $V^{-j} = L(V^{-j-2}) \oplus P_L^{-j}$. De plus il est facile de voir que cette d\'ecomposition est orthogonale pour la forme sur $V^{-j}$ consid\'er\'ee ci-dessus, et que $L$ induit une isom\'etrie de $V^{-j-2}$ (muni de $(x,y) \mapsto \langle x,L^{\circ (j+2)} (y) \rangle$) vers $L(V^{-j-2})$ (muni de $(x,y) \mapsto \langle x,L^{\circ j} (y) \rangle$). On d\'eduit de notre hypoth\`ese de r\'ecurrence que la restriction de notre forme \`a $L(V^{-j-2})$ est non d\'eg\'en\'er\'ee. Sa restriction \`a $P_L^{-j}$ est \'egalement non d\'eg\'en\'er\'ee par hypoth\`ese. Donc la forme est non d\'eg\'en\'er\'ee sur $V^{-j}$.
\end{proof} 

Dans l'autre sens, la propri\'et\'e de Lefschetz difficile peut parfois aider \`a d\'emontrer les relations de Hodge--Riemann gr\^ace au r\'esultat suivant, dont la preuve utilise les m\^emes m\'ethodes que la preuve pr\'ec\'edente.

\begin{lemm}
Soit $(V,L,\langle -,- \rangle)$ une donn\'ee de Hodge--Riemann, et supposons que $(V,L)$ v\'erifie le th\'eor\`eme de Lefschetz difficile. Alors $(V,L,\langle -,- \rangle)$ v\'erifie les relations de Hodge--Riemann si et seulement si pour tout $i \in 2\Z_{\geq 0}$ tel que $\min+i \leq 0$ (o\`u $\min$ est le degr\'e minimal en lequel $V$ est non nul), la forme bilin\'eaire sur $V^{\min+i}$ d\'efinie par $(x,y) \mapsto \langle x,L^{\circ(-\min-i)} (y) \rangle$ a pour signature\footnote{Ici, la \emph{signature} d'une forme bilin\'eaire sym\'etrique est interpr\'et\'ee comme $r-s$, o\`u $r$, respectivement $s$, est le nombre de coefficients strictement positifs, respectivement strictement n\'egatifs, dans la d\'ecomposition de la forme quadratique associ\'ee en somme de carr\'es de formes lin\'eaires lin\'eairement ind\'ependantes.}
\[
\dim(V^{\min}) - \bigl( \dim(V^{\min+2})-\dim(V^{\min}) \bigr) + \cdots + (-1)^{\frac{i}{2}} \bigl( \dim(V^{\min+i}) - \dim(V^{\min+i-2}) \bigr).
\]
\end{lemm}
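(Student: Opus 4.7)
Je suivrais exactement le sch\'ema de la preuve du Lemme pr\'ec\'edent, en exploitant la d\'ecomposition primitive fournie par le th\'eor\`eme de Lefschetz difficile. Notons $m$ le degr\'e minimal en lequel $V$ est non nul, et pour $i \in 2\Z_{\geq 0}$ avec $m+i \leq 0$, notons $Q_i$ la forme bilin\'eaire sym\'etrique sur $V^{m+i}$ d\'efinie par $Q_i(x,y) = \langle x, L^{\circ(-m-i)}(y) \rangle$. Par Lefschetz difficile, $L^{\circ(-m-i)}$ est un isomorphisme, donc $Q_i$ est non d\'eg\'en\'er\'ee. D\'esignons par $\sigma_i$ sa signature et par $\pi_i$ celle de sa restriction \`a $P_L^{m+i}$. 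Les relations de Hodge--Riemann \'equivalent pr\'ecis\'ement \`a $\pi_i = (-1)^{i/2} \dim P_L^{m+i}$ pour tout $i$.

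L'\'etape pr\'eparatoire, identique \`a celle du Lemme pr\'ec\'edent, est de montrer que la d\'ecomposition
\[
V^{m+i} = L(V^{m+i-2}) \oplus P_L^{m+i}
\]
est orthogonale pour $Q_i$, et que $L$ induit une isom\'etrie $(V^{m+i-2}, Q_{i-2}) \simto (L(V^{m+i-2}), Q_i|)$ (calcul direct utilisant l'autoadjonction de $L$). On obtient ainsi la relation d'additivit\'e
\[
\sigma_i = \sigma_{i-2} + \pi_i
\]
(avec $\sigma_{-2}=0$). Par ailleurs, Lefschetz difficile donne $\dim P_L^{m+i} = \dim V^{m+i} - \dim V^{m+i-2}$.

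Pour l'implication directe, les relations de Hodge--Riemann donnent imm\'ediatement $\pi_i = (-1)^{i/2}(\dim V^{m+i} - \dim V^{m+i-2})$, et en it\'erant la relation d'additivit\'e on tombe exactement sur la somme altern\'ee de l'\'enonc\'e. Pour la r\'eciproque, je proc\'ederais par r\'ecurrence sur $i$ : en supposant $\sigma_{i-2}$ \'egale \`a la somme altern\'ee pr\'edite (cas $i=0$ trivial, ou hypoth\`ese de r\'ecurrence), la diff\'erence donne
\[
\pi_i = \sigma_i - \sigma_{i-2} = (-1)^{i/2}(\dim V^{m+i} - \dim V^{m+i-2}) = (-1)^{i/2} \dim P_L^{m+i}.
\]
Comme la valeur absolue de la signature d'une forme bilin\'eaire sym\'etrique est major\'ee par la dimension de l'espace, avec \'egalit\'e si et seulement si la forme est d\'efinie, ceci force la restriction de $Q_i$ \`a $P_L^{m+i}$ \`a \^etre $(-1)^{i/2}$-d\'efinie, ce qui est pr\'ecis\'ement la relation de Hodge--Riemann en degr\'e $m+i$.

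L'argument ne comporte aucun obstacle s\'erieux : tout se ram\`ene \`a l'additivit\'e de la signature dans une d\'ecomposition orthogonale, observation d\'ej\`a mise en \oe uvre dans le Lemme pr\'ec\'edent. L'int\'er\^et du lemme est conceptuel : il ram\`ene les relations de Hodge--Riemann (une condition qualitative de positivit\'e) \`a une \'egalit\'e de signatures (un invariant num\'erique discret), ce qui est pr\'ecis\'ement la propri\'et\'e qui permettra les arguments de d\'eformation continue \'evoqu\'es au~\S\ref{ss:structure-preuve}.
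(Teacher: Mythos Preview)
Votre preuve est correcte et suit exactement l'approche que le papier sugg\`ere en renvoyant aux m\'ethodes du lemme pr\'ec\'edent : d\'ecomposition orthogonale $V^{m+i} = L(V^{m+i-2}) \oplus P_L^{m+i}$, isom\'etrie induite par $L$, puis additivit\'e des signatures. La r\'ecurrence dans la r\'eciproque est en fait superflue (l'hypoth\`ese donne d\'ej\`a $\sigma_{i-2}$ pour tout $i$, de sorte que $\pi_i = \sigma_i - \sigma_{i-2}$ se calcule directement), mais l'argument reste valide.
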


En utilisant le fait que la signature d'une famille continue de formes bilin\'eaires sym\'etriques non d\'eg\'en\'er\'ees ne peut pas varier, on en d\'eduit le principe suivant, qui est crucial dans la preuve de~\cite{ew1}.

\begin{coro}
\label{coro:continuite}
Soit $I$ un intervalle de $\R$. On suppose donn\'es:
\begin{itemize}
\item
un espace vectoriel gradu\'e $V$ de dimension finie concentr\'e soit en degr\'es paires, soit en degr\'es impaires;
\item
une forme bilin\'eaire sym\'etrique non d\'eg\'en\'er\'ee gradu\'ee $\langle -,- \rangle : V \times V \to \R$;
\item
une famille continue $(L_t)_{t \in I}$ d'applications lin\'eaires gradu\'ees $V \to V(2)$ autoadjointes pour $\langle -,- \rangle$.
\end{itemize}
Supposons que $(V,L_t)$ v\'erifie le th\'eor\`eme de Lefschetz difficile pour tout $t \in I$. S'il existe $t_0 \in I$ tel que $(V,L_{t_0},\langle -,- \rangle)$ v\'erifie les relations de Hodge--Riemann, alors $(V,L_{t},\langle -,- \rangle)$ v\'erifie les relations de Hodge--Riemann pour tout $t \in I$.
\end{coro}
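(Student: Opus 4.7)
The plan is to reduce the statement to the preceding lemma (which characterises the Hodge--Riemann relations, in the presence of hard Lefschetz, by the signatures of a finite collection of explicit symmetric bilinear forms) together with the fact that the signature of a continuous family of non-degenerate symmetric bilinear forms is locally constant.

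More precisely, for each $i \in 2\Z_{\geq 0}$ tel que $\min + i \leq 0$ (o\`u $\min$ est le degr\'e minimal en lequel $V$ est non nul) et tout $t \in I$, je d\'efinirais la forme bilin\'eaire sym\'etrique
\[
\beta^i_t : V^{\min+i} \times V^{\min+i} \to \R, \quad (x,y) \mapsto \langle x, L_t^{\circ(-\min-i)}(y) \rangle.
\]
La sym\'etrie d\'ecoule du fait que $L_t$ est autoadjoint pour $\langle -,- \rangle$. L'application $t \mapsto \beta^i_t$ est continue, puisque $(L_t)_{t \in I}$ l'est. De plus, l'hypoth\`ese de Lefschetz difficile pour $(V,L_t)$ assure que $L_t^{\circ(-\min-i)} : V^{\min+i} \to V^{-\min-i}$ est un isomorphisme ; composant avec la dualit\'e parfaite que $\langle -,- \rangle$ induit entre $V^{\min+i}$ et $V^{-\min-i}$, on en d\'eduit que $\beta^i_t$ est non d\'eg\'en\'er\'ee pour tout $t \in I$.

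Je conclurais comme suit. Le sous-ensemble des matrices sym\'etriques non d\'eg\'en\'er\'ees d'une taille donn\'ee \'etant ouvert, et la signature \'etant localement constante sur ce sous-ensemble (les valeurs propres d\'ependent continument des coefficients et aucune ne peut traverser $0$ sans que la forme devienne d\'eg\'en\'er\'ee), la fonction $t \mapsto \mathrm{sign}(\beta^i_t)$ est localement constante sur $I$. Comme $I$ est connexe, elle est donc constante. Le lemme pr\'ec\'edent, appliqu\'e \`a $(V,L_{t_0}, \langle-,-\rangle)$, assure que cette signature vaut pr\'ecis\'ement
\[
\dim(V^{\min}) - \bigl( \dim(V^{\min+2}) - \dim(V^{\min}) \bigr) + \cdots + (-1)^{\frac{i}{2}} \bigl( \dim(V^{\min+i}) - \dim(V^{\min+i-2}) \bigr).
\]
Cette \'egalit\'e vaut donc pour tout $t \in I$, et en r\'eappliquant le lemme pr\'ec\'edent dans l'autre sens on obtient les relations de Hodge--Riemann pour $(V,L_t,\langle-,-\rangle)$.

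Il n'y a pas vraiment d'obstacle majeur : tout le travail conceptuel est contenu dans le lemme pr\'ec\'edent qui traduit les relations de Hodge--Riemann en une condition sur des signatures. Le seul point \`a v\'erifier est l'invariance de la signature le long d'une famille continue de formes non d\'eg\'en\'er\'ees, qui est un fait standard d'alg\`ebre lin\'eaire r\'eelle.
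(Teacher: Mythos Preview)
Your argument is correct and is exactly the approach the paper takes: the paper does not write out a proof of this corollary, but simply derives it from the preceding lemma together with the observation that ``la signature d'une famille continue de formes bilin\'eaires sym\'etriques non d\'eg\'en\'er\'ees ne peut pas varier''. Your proposal spells this out in full detail.
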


Enfin, le lemme suivant 
(voir~\cite[Lemma~2.3]{ew1}) 
joue un r\^ole crucial pour d\'emontrer le th\'eor\`eme de Lefschetz difficile.

\begin{lemm}
\label{lemm:Lefschetz-recurrence}
Soit $(V,L_V,\langle -,- \rangle_V)$ une donn\'ee de Hodge--Riemann qui v\'erifie les relations de Hodge--Riemann, et soit $V'$ un espace vectoriel gradu\'e de dimension finie muni d'une forme bilin\'eaire sym\'etrique $\langle -,- \rangle_{V'}$ (qu'on ne suppose pas non d\'eg\'en\'er\'ee) et d'une application lin\'eaire gradu\'ee $L_{V'} : V' \to V'(2)$. Soit \'egalement $\varphi : V' \to V(1)$ une application lin\'eaire gradu\'ee qui est injective en degr\'es $\leq -1$ et qui v\'erifie
\[
\varphi \circ L_{V'} = L_V \circ \varphi \quad \text{et} \quad \langle \varphi(x), \varphi(y) \rangle_{V} = \langle x, L_{V'}(y) \rangle_{V'} \ \text{pour tous $x,y \in V'$.}
\]
Alors $(L_{V'})^{\circ i} : (V')^{-i} \to (V')^i$ est injective pour tout $i \geq 0$.
\end{lemm}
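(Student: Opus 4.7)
Le plan est de transporter la question dans $V$ via $\varphi$, puis d'y invoquer les relations de Hodge--Riemann. On fixera $i \geq 1$ (le cas $i=0$ est trivial puisque $L_{V'}^{\circ 0}=\id$) et un \'el\'ement $x \in (V')^{-i}$ v\'erifiant $L_{V'}^{\circ i}(x)=0$, et on cherchera \`a d\'emontrer que $x=0$. On posera $u := \varphi(x) \in V^{-(i-1)}$ ; puisque $\varphi$ est injective en degr\'e $-i$ (car $-i \leq -1$), il suffira de prouver que $u=0$.

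La premi\`ere \'etape consistera \`a observer que $u$ est $L_V$-primitif au bon degr\'e. La relation $\varphi \circ L_{V'} = L_V \circ \varphi$ donne par r\'ecurrence sur $k$ que $L_V^{\circ k} \circ \varphi = \varphi \circ L_{V'}^{\circ k}$, d'o\`u
\[
L_V^{\circ i}(u) = \varphi(L_{V'}^{\circ i}(x)) = 0,
\]
c'est-\`a-dire $u \in P_{L_V}^{-(i-1)}$. La seconde \'etape utilisera l'``isom\'etrie tordue'' $\langle \varphi(a), \varphi(b) \rangle_V = \langle a, L_{V'}(b) \rangle_{V'}$ pour calculer
\[
\langle u, L_V^{\circ (i-1)}(u) \rangle_V = \langle \varphi(x), \varphi(L_{V'}^{\circ (i-1)}(x)) \rangle_V = \langle x, L_{V'}^{\circ i}(x) \rangle_{V'} = 0.
\]

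La conclusion sera alors directe : les relations de Hodge--Riemann pour $(V,L_V,\langle -,- \rangle_V)$ assurent que la forme bilin\'eaire sym\'etrique $(a,b) \mapsto \langle a, L_V^{\circ (i-1)}(b) \rangle_V$, restreinte au sous-espace primitif $P_{L_V}^{-(i-1)}$, est \emph{d\'efinie} (positive ou n\'egative) ; elle ne s'annule donc sur cet espace que sur le vecteur nul, d'o\`u $u=0$, puis $x=0$. Le seul point d\'elicat, et la principale source de vigilance, sera de v\'erifier que les relations de Hodge--Riemann fournissent effectivement une forme d\'efinie en degr\'e $-(i-1)$ : cela requiert $-(i-1) \leq 0$ (garanti par $i \geq 1$) et la bonne parit\'e de $-(i-1)-\min$, o\`u $\min$ est le degr\'e minimal de $V$ (automatique puisque $V$ est concentr\'e en degr\'es de parit\'e constante). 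Dans le cas marginal $-(i-1) < \min$, on a $V^{-(i-1)}=0$ et la conclusion $u=0$ est imm\'ediate. Aucun obstacle s\'erieux n'est donc \`a pr\'evoir : l'argument repose enti\`erement sur l'observation que l'image par $\varphi$ d'un \'el\'ement annul\'e par $L_{V'}^{\circ i}$ est un \'el\'ement $L_V$-primitif de ``longueur nulle'', ce que la d\'efinitude fournie par Hodge--Riemann interdit s'il est non nul.
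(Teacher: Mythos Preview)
Your proof is correct and follows essentially the same approach as the paper's: both transport the question to $V$ via $\varphi$, observe that $\varphi(x)$ is primitive, and invoke the definiteness given by the Hodge--Riemann relations. The only cosmetic difference is that the paper argues directly (take $x \neq 0$ and show $(L_{V'})^{\circ i}(x) \neq 0$, splitting into the cases $L_V^{\circ i}(\varphi(x)) \neq 0$ and $L_V^{\circ i}(\varphi(x)) = 0$), whereas you argue by contrapositive; the underlying idea is identical.
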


\begin{proof}
Le r\'esultat est \'evident pour $i=0$.
Soient maintenant $i \geq 1$ et $x \in (V')^{-i} \smallsetminus \{0\}$. Si $0 \neq (L_V)^{\circ i}(\varphi(x)) = \varphi \bigl( (L_{V'})^{\circ i}(x) \bigr)$, alors $(L_{V'})^{\circ i}(x) \neq 0$. Sinon, $\varphi(x)$ est un \'el\'ement primitif non nul de $V^{-i+1}$, de sorte que $0 \neq \langle \varphi(x), (L_V)^{\circ (i-1)}(\varphi(x)) \rangle_V = \langle x, (L_{V'})^{\circ i}(x) \rangle_{V'}$, et donc $(L_{V'})^{\circ i}(x) \neq 0$.
\end{proof}

\subsection{Formes invariantes}
\label{ss:formes-inv}

Si $B$ est un bimodule de Soergel, une \emph{forme invariante} sur $B$ est une application $\R$-bilin\'eaire gradu\'ee\footnote{Ici, cette condition signifie que $\langle x,y \rangle_B \in R^{i+j}$ si $x \in B^i$ et $y \in B^j$.}
\[
\langle -,- \rangle_B : B \times B \to R
\]
qui v\'erifie $\langle rx,y \rangle = \langle x,ry \rangle$ et $\langle xr,y \rangle = \langle x,yr \rangle = \langle x,y \rangle r$ pour tous $x,y \in B$ et $r \in R$. La donn\'ee d'une telle forme est \'equivalente, via $x \mapsto \langle x,- \rangle_B$ \`a celle d'un morphisme de $R$-bimodules  gradu\'es $B \to \D(B)$, o\`u on a d\'efini $\D$ par
\[
\D(B) := \bigoplus_{i \in \Z} \Hom_{\mathrm{Mod^\Z}-R}(B,R(i)),
\]
o\`u $\mathrm{Mod^\Z}-R$ d\'esigne la cat\'egorie des $R$-modules \`a droite gradu\'es, et o\`u la structure de $R$-bimodule est d\'efinie par $(rfr')(x)=f(rxr')$ pour $r,r' \in R$, $f \in \D(B)$, $x \in B$. On dira que cette forme invariante est \emph{non d\'eg\'en\'er\'ee} si le morphisme associ\'e $B \to \D(B)$ est un isomorphisme.

Le lemme suivant (facile \`a v\'erifier) fournit la m\'ethode principale pour construire de telles formes.

\begin{lemm}
\label{lemm:formes}
Soient $B,B'$ des bimodules de Soergel, et $\langle -,- \rangle_B$, $\langle -,- \rangle_{B'}$ des formes invariantes sur $B$ et $B'$. On d\'efinit une forme $\R$-bilin\'eaire sur $B B'$ en posant
\[
\langle b_1 \otimes b_1', b_2 \otimes b'_2 \rangle_{B B'} =\langle (\langle b_1,b_2 \rangle_B) \cdot b'_1, b'_2 \rangle_{B'}.
\]
Alors $\langle -, - \rangle_{B B'}$ est une forme invariante sur $B B'$, qui est non d\'eg\'en\'er\'ee, resp.~sym\'etrique, si $\langle -, - \rangle_B$ et $\langle -,- \rangle_{B'}$ sont non d\'eg\'en\'er\'ees, resp.~sym\'etriques.
\end{lemm}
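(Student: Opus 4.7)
Le plan est d'\'etablir successivement quatre propri\'et\'es de $\langle -, - \rangle_{BB'}$: son caract\`ere bien d\'efini sur $B \otimes_R B'$, son invariance (et sa compatibilit\'e \`a la graduation), sa sym\'etrie lorsque $\langle -, - \rangle_B$ et $\langle -, - \rangle_{B'}$ le sont, et sa non-d\'eg\'en\'erescence sous la m\^eme hypoth\`ese. Les deux premi\`eres propri\'et\'es se r\'eduisent \`a des v\'erifications directes: pour le caract\`ere bien d\'efini dans le premier argument, je compterais sur l'identit\'e $\langle b_1 r, b_2 \rangle_B = \langle b_1, b_2 \rangle_B \cdot r$ et l'associativit\'e de l'action \`a gauche de $R$ sur $B'$; pour le deuxi\`eme argument, il faudrait de plus invoquer l'invariance \`a gauche de $\langle -, - \rangle_{B'}$ et la commutativit\'e de $R$. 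Les axiomes d'invariance et la propri\'et\'e de graduation pour $\langle -, - \rangle_{BB'}$ s'en d\'eduisent par les m\^emes manipulations.

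Pour la sym\'etrie, je poserais $r := \langle b_1, b_2 \rangle_B \in R$; la sym\'etrie de $\langle -, - \rangle_B$ fournit $r = \langle b_2, b_1 \rangle_B$, de sorte qu'il reste \`a montrer l'\'egalit\'e $\langle r b_1', b_2' \rangle_{B'} = \langle r b_2', b_1' \rangle_{B'}$. Celle-ci d\'ecoule de l'application successive de la sym\'etrie et de l'invariance \`a gauche de $\langle -, - \rangle_{B'}$.

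Le point principal sera la non-d\'eg\'en\'erescence. Notons $\phi_M : M \to \D(M)$ le morphisme de $R$-bimodules gradu\'es associ\'e \`a une forme invariante sur un bimodule $M$. L'id\'ee est d'introduire le morphisme naturel
\[
\mu : \D(B) \otimes_R \D(B') \longrightarrow \D(B \otimes_R B'), \qquad \mu(f \otimes g)(b \otimes b') := g(f(b) \cdot b'),
\]
dont le caract\`ere bien d\'efini sur le produit tensoriel s'appuie sur la $R$-lin\'earit\'e \`a droite de $f$ et sur la commutativit\'e de $R$, puis de v\'erifier par un calcul direct que $\phi_{BB'} = \mu \circ (\phi_B \otimes_R \phi_{B'})$. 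Cette factorisation d\'ecoulera de ce que $\phi_{B'}$ est un morphisme de bimodules (et satisfait donc $\phi_{B'}(rb') = r \cdot \phi_{B'}(b')$) et de la d\'efinition de l'action \`a gauche de $R$ sur $\D(B')$.

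Pour conclure, il suffira de d\'emontrer que $\mu$ est un isomorphisme. J'utiliserai ici que $B$ est libre de type fini comme $R$-module \`a droite (propri\'et\'e rappel\'ee au~\S\ref{ss:bimod-soergel} pour tout bimodule de Soergel): en fixant une telle base $(b_\alpha)_\alpha$ de $B$, on identifie canoniquement $\D(B)$ \`a $\bigoplus_\alpha R$, et le morphisme $\mu$ se ram\`ene \`a l'isomorphisme \'evident $\bigoplus_\alpha \D(B') \simto \D(\bigoplus_\alpha B') = \D(B \otimes_R B')$. L'obstacle principal sera purement comptable: le suivi pr\'ecis des diff\'erentes actions \`a gauche et \`a droite de $R$ dans la d\'efinition de $\mu$ et dans la v\'erification de la factorisation, qui exige une certaine vigilance mais ne recele aucune difficult\'e conceptuelle.
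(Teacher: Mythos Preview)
Your proposal is correct and complete. The paper itself does not provide a proof of this lemma: it is introduced with the parenthetical remark ``facile \`a v\'erifier'' and left to the reader. Your argument supplies exactly the routine verifications the paper omits, and your treatment of non-d\'eg\'en\'erescence via the factorisation $\phi_{BB'} = \mu \circ (\phi_B \otimes_R \phi_{B'})$ together with the freeness of $B$ as right $R$-module is the natural way to proceed.
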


En particulier, on peut construire une forme invariante sym\'etrique non d\'eg\'en\'er\'ee sur $\B_s$ de la fa{\c c}on suivante : les vecteurs
\[
c_e := 1 \otimes 1 \quad \text{et} \quad c_s := \frac{1}{2}(\alpha_s \otimes 1 + 1 \otimes \alpha_s)
\]
forment une base de $\B_s$ comme $R$-module \`a droite ; on d\'efinit alors $\langle r_1 \otimes r_2, r'_1 \otimes r'_2 \rangle_{\B_s}$ comme le coefficient de $r_1 r'_1 \otimes r_2 r'_2$ sur $c_s$ dans la base pr\'ec\'edente. Pour tout mot $\uw$ en $S$, la construction du Lemme~\ref{lemm:formes} fournit alors une forme invariante sym\'etrique et non d\'eg\'en\'er\'ee sur $\BS(\uw)$, qu'on notera $\langle -,- \rangle_{\BS(\uw)}$ et qu'on appellera \emph{forme d'intersection}\footnote{Cette terminologie est justifi\'ee par l'analogie avec la preuve du th\'eor\`eme de d\'ecomposition par de Cataldo et Migliorini (voir le~\S\ref{ss:structure-preuve}), o\`u certaines formes d'intersection en homologie de Borel--Moore jouent un r\^ole crucial. Les formes $\langle -,- \rangle_{\BS(\uw)}$ peuvent \^etre consid\'er\'ees comme des analogues alg\'ebriques de ces formes.}.

L'existence de la forme d'intersection sur $\BS(\uw)$ montre en particulier que $\D(\BS(\uw)) \cong \BS(\uw)$. En utilisant la caract\'erisation de $\B_w$ donn\'ee au Th\'eor\`eme~\ref{theo:classification} on en d\'eduit que pour tout $w \in W$ on a
\begin{equation}
\label{eqn:D-Bw}
\D(\B_w) \cong \B_w.
\end{equation}
En particulier, ceci implique que $\dim((\B_w \otimes_R \R)^{-i}) = \dim((\B_w \otimes_R \R)^{i})$ pour tout $i \geq 0$.

De la forme d'intersection sur $\BS(\uw)$ on peut d\'eduire (par restriction) une forme invariante sym\'etrique non d\'eg\'en\'er\'ee sur chaque $\B_w$ gr\^ace au lemme suivant.

\begin{lemm}
\label{lemm:restriction-forme}
Si $B$ est un bimodule de Soergel muni d'une forme invariante non-d\'eg\'en\'er\'ee $\langle -,- \rangle_B$, si $w \in W$, et si $\B_w$ apparait comme facteur direct de $B$ avec multiplicit\'e $1$, alors pour tout choix d'inclusion scind\'ee $\B_w \hookrightarrow B$ la restriction de $\langle -,- \rangle_B$ \`a $\B_w$ est non d\'eg\'en\'er\'ee.
\end{lemm}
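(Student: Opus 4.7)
Le plan est de traduire l'\'enonc\'e en termes de morphismes dans $\scB$ et d'exploiter la structure d'anneau local de $\End_{\scB}(\B_w)$. La forme $\langle -,- \rangle_B$ correspond \`a un isomorphisme de $R$-bimodules gradu\'es $\phi : B \simto \D(B)$. En fixant une inclusion scind\'ee $i : \B_w \hookrightarrow B$ de r\'etraction $p : B \to \B_w$, le foncteur $\D$ \'etant additif, $\D(i) : \D(B) \to \D(\B_w)$ est un \'epimorphisme scind\'e de section $\D(p)$, et une v\'erification directe montre que la restriction de la forme \`a $\B_w$ via $i$ correspond au morphisme compos\'e $\psi := \D(i) \circ \phi \circ i : \B_w \to \D(\B_w)$. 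L'\'enonc\'e \'equivaut donc \`a d\'emontrer que $\psi$ est un isomorphisme.

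J'utiliserai ensuite l'hypoth\`ese de multiplicit\'e $1$ sous la forme suivante. Puisque $\scB$ est de Krull--Schmidt, on peut \'ecrire $B = \B_w \oplus B'$ o\`u $B'$ ne contient pas $\B_w$ comme facteur direct. L'existence de $\phi$ combin\'ee avec l'isomorphisme $\D(\B_w) \cong \B_w$ donn\'e par~\eqref{eqn:D-Bw} fournit $\B_w \oplus B' \cong \B_w \oplus \D(B')$; par l'unicit\'e dans le th\'eor\`eme de Krull--Schmidt on en d\'eduit $B' \cong \D(B')$, de sorte que $\D(B')$ ne contient pas non plus $\B_w$ comme facteur direct. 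Le morphisme $\phi$ et son inverse s'\'ecrivent alors sous forme matricielle
\[
\phi = \begin{pmatrix} a & b \\ c & d \end{pmatrix}, \qquad \phi^{-1} = \begin{pmatrix} a' & b' \\ c' & d' \end{pmatrix},
\]
avec $a = \psi$ ; les relations $\phi^{-1} \circ \phi = \id_B$ et $\phi \circ \phi^{-1} = \id_{\D(B)}$ entra\^inent en particulier les \'egalit\'es $a' \circ a = \id_{\B_w} - b' \circ c$ et $a \circ a' = \id_{\D(\B_w)} - b \circ c'$.

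L'\'etape la plus d\'elicate est l'observation cat\'egorique suivante : dans une cat\'egorie de Krull--Schmidt, si $X$ est ind\'ecomposable et n'appara\^it pas comme facteur direct d'un objet $Y$, alors toute compos\'ee $g \circ h : X \to Y \to X$ appartient au radical de l'anneau local $\End(X)$ (sinon, $h \circ (g \circ h)^{-1} \circ g$ serait un idempotent de $Y$ d'image isomorphe \`a $X$, contredisant l'hypoth\`ese). J'appliquerai ce principe deux fois. D'une part, $b' \circ c$ se factorise par $\D(B')$ qui ne contient pas $\B_w$, donc appartient au radical de $\End_{\scB}(\B_w)$ ; l'anneau \'etant local, $a' \circ a = \id - b' \circ c$ est inversible, ce qui fournit un inverse \`a gauche pour $a$. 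D'autre part, $b \circ c'$ se factorise par $B'$ qui ne contient pas $\D(\B_w) \cong \B_w$, donc $a \circ a' = \id - b \circ c'$ est inversible, fournissant un inverse \`a droite. On conclura ainsi que $a = \psi$ est un isomorphisme.
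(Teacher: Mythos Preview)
Your proof is correct and follows the same approach as the paper. The paper's argument is more terse: it simply invokes ``la preuve de l'unicit\'e de la d\'ecomposition en facteurs ind\'ecomposables dans une cat\'egorie de Krull--Schmidt'' to conclude that the composite $\B_w \hookrightarrow B \simto \D(B) \twoheadrightarrow \D(\B_w)$ is an isomorphism, whereas you unpack this step explicitly via the matrix decomposition and the radical argument in $\End_{\scB}(\B_w)$.
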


\begin{proof}
Notons $f : \B_w \hookrightarrow B$ l'inclusion scind\'ee choisie.
Notre forme sur $B$ induit un isomorphisme $B \xrightarrow{\sim} \D(B)$. 
Notre hypoth\`ese et~\eqref{eqn:D-Bw} montrent que $\B_w$ est facteur direct de $\D(B)$ avec multiplicit\'e $1$, et que $\D(f) : \D(B) \twoheadrightarrow \D(\B_w)$ est une projection sur un tel facteur. 
La preuve de l'unicit\'e de la d\'ecomposition en facteurs ind\'ecomposables dans une cat\'egorie de Krull--Schmidt montre alors que la compos\'ee
\[
\B_w \hookrightarrow B \xrightarrow{\sim} \D(B) \twoheadrightarrow \D(\B_w)
\]
est un isomorphisme, ce qui implique que la restriction consid\'er\'ee est non d\'eg\'en\'er\'ee.
\end{proof}

Si la conjecture de Soergel est v\'erifi\'ee pour $w$ (c'est-\`a-dire si $\mathrm{ch}_\Delta(\B_w)=\uH_w$) alors~\eqref{eqn:Hom-BxBy} et~\eqref{eqn:D-Bw} montrent que
\[
\dim_{\R} \bigl( \Hom_{\scB}(\B_w, \D(\B_w)) \bigr)=1.
\]
On en d\'eduit que, sous cette hypoth\`ese, il existe 
une \emph{unique} forme invariante non nulle sur $\B_w$ \`a scalaire pr\`es. D'apr\`es nos remarques pr\'ec\'edentes, elle est sym\'etrique et non d\'eg\'en\'er\'ee.

On peut v\'erifier par un calcul explicite (voir~\cite[Lemma~3.10]{ew1}) que si $w \in W$ et si $\uw$ est une expression r\'eduite pour $w$, pour tout $c \in \BS(\uw)^{-\ell(w)}$ non nul le scalaire $\langle c, \rho^{\ell(w)} \cdot c \rangle_{\BS(\uw)}$ est strictement positif. Comme, pour toute inclusion scind\'ee $\B_w \hookrightarrow \BS(\uw)$, la restriction $\B_w^{-\ell(w)} \to \BS(\uw)^{-\ell(w)}$ est un isomorphisme (voir la Remarque~\ref{rema:degre-Bw}\eqref{it:degre-Bw}), on obtient finalement, sous l'hypoth\`ese que la conjecture de Soergel est v\'erifi\'ee pour $w$, qu'il existe (\`a scalaire strictement positif pr\`es) une unique forme invariante $\langle -,-\rangle_{\B_w}$ sur $\B_w$ telle que $\langle c, \rho^{\ell(w)} \cdot c \rangle_{\BS(\uw)} >0$ pour $c \in \B_w^{-\ell(w)}$ non nul, et que cette forme est sym\'etrique et non d\'eg\'en\'er\'ee. On l'appellera \'egalement \emph{forme d'intersection}. (Les \'enonc\'es consid\'er\'es ci-dessous ne d\'ependent du choix de cette forme qu'\`a un scalaire positif pr\`es, de sorte qu'on peut la fixer de fa{\c c}on arbitraire.)


\subsection{\'Enonc\'es}
\label{ss:enonces}

Si $B$ est un bimodule de Soergel, on posera $\overline{B}:=B \otimes_R \R$, o\`u $\R$ est vu comme un $R$-module \`a gauche gradu\'e de la fa{\c c}on standard. Si $\langle -,- \rangle_B$ est une forme invariante sur $B$, elle induit une forme $\R$-bilin\'eaire gradu\'ee sur $\overline{B}$, qu'on notera de la m\^eme fa{\c c}on. Notons que, pour tout $x \in R^n$, le morphisme $x \cdot (-) : \overline{B} \to \overline{B}(n)$ est autoadjoint pour toute telle forme.


Les r\'esultats principaux de~\cite{ew1} peuvent s'\'enoncer de la fa{\c c}on suivante.

\begin{theo}
\label{theo:ew1}
\begin{enumerate}
\item
\label{it:ew1-Soergel}
La Conjecture~{\rm \ref{conj:soergel}} est vraie pour la repr\'esentation $V$, c'est-\`a-dire $\mathrm{ch}_\Delta(\B_w)=\uH_w$ pour tout $w \in W$.
\item
\label{it:ew1-hL}
Pour tout $w \in W$, la donn\'ee de Lefschetz $(\overline{\B_w}, \rho \cdot (-))$ v\'erifie le th\'eor\`eme de Lefschetz difficile.
\item
\label{it:ew1-HR}
Pour tout $w \in W$, la donn\'ee de Hodge--Riemann $(\overline{\B_w}, \rho \cdot (-), \langle -,- \rangle_{\B_w})$ v\'erifie les relations de Hodge--Riemann.
\end{enumerate}
\end{theo}

Notons que, d'apr\`es le Lemme~\ref{lemm:HR-hL}, l'\'enonc\'e~\eqref{it:ew1-HR} du Th\'eor\`eme~\ref{theo:ew1} implique l'\'enonc\'e~\eqref{it:ew1-hL} ; cependant cette propri\'et\'e joue un r\^ole crucial dans la preuve de l'\'enonc\'e~\eqref{it:ew1-HR} et m\'erite donc d'\^etre \'enonc\'ee explicitement. D'autre part, l'\'enonc\'e~\eqref{it:ew1-Soergel} est n\'ecessaire pour fixer la forme $\langle -,- \rangle_{\B_w}$, et donc pour que l'\'enonc\'e~\eqref{it:ew1-HR} ait un sens.

Les trois propri\'et\'es \'enonc\'ees dans le Th\'eor\`eme~\ref{theo:ew1} sont d\'emontr\'ees simultan\'ement par Elias--Williamson, par r\'ecurrence sur la longueur de $w$. Pour expliquer cette preuve, on dira que \emph{la conjecture de Soergel}, resp.~\emph{le th\'eor\`eme de Lefschetz difficile}, resp.~\emph{les relations de Hodge--Riemann}, \emph{sont v\'erifi\'es en longueur $\leq n$} si l'\'enonc\'e~\eqref{it:ew1-Soergel}, resp.~\eqref{it:ew1-hL}, resp.~\eqref{it:ew1-HR}, du Th\'eor\`eme~\ref{theo:ew1} est v\'erifi\'e pour tout $w \in W$ tel que $\ell(w) \leq n$.

La premi\`ere \'etape de la preuve est le r\'esultat suivant.

\begin{prop}
\label{prop:ew1-etape1}
Soit $n \geq 0$, et supposons que la conjecture de Soergel, le th\'eor\`eme de Lefschetz difficile et les relations de Hodge--Riemann sont v\'erifi\'es en longueur $\leq n$. Supposons d'autre part que pour tout $w \in W$ tel que $\ell(w)=n$ et tout $s \in S$ tel que $ws>w$ la donn\'ee de Hodge--Riemann $(\overline{\B_w \B_s}, \rho \cdot (-), \langle -, - \rangle_{\B_w \B_s})$ v\'erifie les relations de Hodge--Riemann, o\`u $\langle -, - \rangle_{\B_w\B_s}$ est d\'efinie comme au Lemme~{\rm \ref{lemm:formes}} \`a partir des formes d'intersection sur $\B_w$ et $\B_s$. Alors la conjecture de Soergel, le th\'eor\`eme de Lefschetz difficile et les relations de Hodge--Riemann sont v\'erifi\'es en longueur $\leq n+1 $.
\end{prop}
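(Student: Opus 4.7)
Le plan consiste \`a traiter le cas $\ell(w) = n+1$, les longueurs inf\'erieures \'etant couvertes par l'hypoth\`ese de r\'ecurrence. Fixons $w=ys$ avec $s \in S$ et $\ell(y)=n$, de sorte que $ys>y$. Le Lemme~\ref{lemm:BwBs} fournit une d\'ecomposition
\[
\B_y \B_s \cong \B_w \oplus B', \qquad B' = \bigoplus_{z<w,\, i \in \Z} \B_z(i)^{\oplus m_{z,i}},
\]
avec $m_{z,i} \in \Z_{\geq 0}$, et la forme invariante $\langle -,-\rangle_{\B_y\B_s}$ du Lemme~\ref{lemm:formes} rend $\B_y\B_s$ auto-dual, d'o\`u $m_{z,i}=m_{z,-i}$ et la sym\'etrie des dimensions gradu\'ees.

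La premi\`ere \'etape est la conjecture de Soergel pour $w$. En utilisant $[\B_z]=\uH_z$ pour $\ell(z)\leq n$, l'identit\'e $[\B_y\B_s]=\uH_y\uH_s$ dans $[\scB]\cong\cH_{(W,S)}$ donne
\[
[\B_w] = \uH_w + \sum_{z<w} \Bigl( \mu_{y,s}^z - \sum_{i \in \Z} m_{z,i}\vv^i \Bigr) \uH_z.
\]
Puisque $\mu_{y,s}^z \in \Z$ d'apr\`es~\eqref{eqn:BwBs-perverse} et que $\sum_i m_{z,i}\vv^i$ est un polyn\^ome de Laurent $d$-invariant \`a coefficients entiers positifs ou nuls, la conjecture de Soergel pour $w$ \'equivaut \`a la nullit\'e de $m_{z,i}$ pour tout $z<w$ et tout $i\neq 0$. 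Suivant la strat\'egie d'Elias--Williamson, pour chaque $z \leq y$ on introduit une ``forme d'intersection locale'' $I_z^{y,s}$ sur $\Hom_{\scB}(\B_z, \B_y\B_s)$ dont la non-d\'eg\'en\'erescence \'equivaut, par un d\'ecompte standard de multiplicit\'es, \`a la nullit\'e voulue. L'ingr\'edient cl\'e est la construction d'une injection isom\'etrique
\[
\bigl( \Hom_{\scB}(\B_z, \B_y\B_s),\, I_z^{y,s} \bigr) \hookrightarrow \bigl( \overline{\B_y\B_s}^{-\ell(z)},\, (x,y) \mapsto \langle x, \rho^{\ell(z)} y\rangle_{\B_y\B_s} \bigr)
\]
\`a valeurs dans le sous-espace des \'el\'ements primitifs, donn\'ee par $f \mapsto \overline{f(c_z)}$ pour $c_z \in \B_z^{-\ell(z)}$ non nul (la bonne d\'efinition et l'injectivit\'e utilisant les hypoth\`eses de r\'ecurrence sur $\B_z$). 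Les relations de Hodge--Riemann suppos\'ees pour $\B_y\B_s$ rendent la forme cible d\'efinie, donc non d\'eg\'en\'er\'ee, et sa restriction \`a l'image l'est \emph{a fortiori}.

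Une fois la conjecture de Soergel \'etablie pour $w$, le Lemme~\ref{lemm:HR-hL} appliqu\'e \`a $\B_y\B_s$ (dont les dimensions gradu\'ees sont sym\'etriques) donne Lefschetz difficile pour $\B_y\B_s$, et la propri\'et\'e se restreint au facteur direct $\rho$-stable $\B_w$. Pour les relations de Hodge--Riemann, la conjecture de Soergel pour $w$ force $B' = \bigoplus_{z<w} \B_z^{\oplus\mu_{y,s}^z}$ (sans d\'ecalage), donc $\min(\overline{\B_y\B_s}) = \min(\overline{\B_w}) = -\ell(w)$. La d\'ecomposition $\B_y\B_s = \B_w \oplus B'$ est orthogonale pour $\langle-,-\rangle_{\B_y\B_s}$ puisque toute composante $\B_w \to \D(B') \cong B'$ vit dans $\bigoplus_{z<w}\Hom_{\scB}(\B_w, \B_z) = 0$ par~\eqref{eqn:Hom-BxBy}. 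Par le Lemme~\ref{lemm:restriction-forme}, la forme restreinte \`a $\B_w$ est non d\'eg\'en\'er\'ee et invariante, donc \'egale \`a $c_w \langle-,-\rangle_{\B_w}$ pour un $c_w \in \R^*$ ; \'evaluer $\langle c, \rho^{\ell(w)} c\rangle$ en $c \in \B_w^{-\ell(w)} = (\B_y\B_s)^{-\ell(w)}$ non nul donne une valeur strictement positive pour chacune des deux formes (par normalisation de la forme d'intersection et par Hodge--Riemann pour $\B_y\B_s$ en degr\'e minimal), d'o\`u $c_w>0$. Les relations de Hodge--Riemann pour $\B_y\B_s$ se transf\`erent alors au facteur orthogonal $\B_w$ muni de $c_w\langle-,-\rangle_{\B_w}$, et, $c_w$ \'etant positif, elles valent aussi pour $\langle-,-\rangle_{\B_w}$.

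Le coeur technique est la premi\`ere \'etape : construction pr\'ecise de la forme d'intersection locale $I_z^{y,s}$, v\'erification que $f \mapsto \overline{f(c_z)}$ fournit bien une injection isom\'etrique dans le sous-espace des primitifs (l\`a o\`u Lefschetz difficile et la conjecture de Soergel pour les bimodules de plus petite longueur interviennent de mani\`ere cruciale), et r\'eduction de la conjecture de Soergel pour $w$ \`a la non-d\'eg\'en\'erescence de ces formes. Les deux autres \'etapes ne reposent que sur des propri\'et\'es formelles de $\D$, le Lemme~\ref{lemm:restriction-forme}, et le suivi des signes en degr\'e minimal.
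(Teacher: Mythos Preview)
Your proposal is correct and follows essentially the same strategy as the paper: reduce the Soergel conjecture for $w=ys$ to the non-degeneracy of the local intersection forms on $\Hom_{\scB}(\B_z,\B_y\B_s)$ via an isometric embedding into the primitive part of $\overline{\B_y\B_s}^{-\ell(z)}$, then transfer hard Lefschetz and Hodge--Riemann from $\overline{\B_y\B_s}$ to its orthogonal summand $\overline{\B_w}$ using that the restricted form is a positive multiple of $\langle-,-\rangle_{\B_w}$ and that the minimal degrees coincide. The only slip is the quantifier ``pour chaque $z\leq y$'': you need this for every $z<w$ (equivalently $\ell(z)\leq n$), not just $z\leq y$.
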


\begin{proof}[Id\'ee de preuve]
Soit $x \in W$ avec $\ell(x)=n+1$, et \'ecrivons $x=ys$ avec $s \in S$ et $\ell(y)=n$.
D'apr\`es une id\'ee de Soergel (voir~\cite[Lemma~7.1(2)]{soergel-bim}), puisque la Conjecture~\ref{conj:soergel} est vraie en longueur $\leq \ell(y)$, pour d\'emontrer que $\mathrm{ch}_\Delta(\B_x)=\uH_x$ il suffit de montrer que pour tout $z < x$ la forme bilin\'eaire sur $\Hom_\scB(\B_z, \B_y \B_s)$ d\'efinie par la composition
\[
\Hom_\scB(\B_z, \B_y \B_s) \otimes_{\R} \Hom_\scB(\B_y \B_s, \B_z) \to \Hom_{\scB}(\B_z, \B_z) \overset{\eqref{eqn:Hom-BxBy}}{=} \R
\]
(o\`u $\Hom_\scB(\B_y \B_s, \B_z)$ est identifi\'e \`a $\Hom_\scB(\B_z, \B_y \B_s)$ via $f \mapsto f^*$, o\`u l'adjoint est pris par rapport aux formes d'intersection sur $\B_z$ et $\B_y \B_s$)
est non d\'eg\'en\'er\'ee.

Elias--Williamson d\'emontrent alors (voir~\cite[Theorem~4.1]{ew1}) qu'il existe une injection
\[
\Hom_\scB(\B_z, \B_y \B_s) \hookrightarrow \Bigl(\overline{\B_y \B_s} \Bigr)^{-\ell(z)}
\]
dont l'image est contenue dans la partie primitive de $(\overline{\B_y \B_s})^{-\ell(z)}$ (c'est-\`a-dire le noyau de $\rho^{\ell(z)+1} \cdot (-)$) et qui est une isom\'etrie \`a un scalaire positif pr\`es, o\`u $\Hom_\scB(\B_z, \B_y \B_s)$ est muni de la forme bilin\'eaire consid\'er\'ee ci-dessus et $(\overline{\B_y \B_s})^{-\ell(z)}$ de la ``forme de Lefschetz'' $(x,y) \mapsto \langle \rho^{\ell(z)} x, y \rangle_{\B_y\B_s}$. Puisque la restriction d'une forme d\'efinie positive ou n\'egative \`a un sous-espace est non d\'eg\'en\'er\'ee, les relations de Hodge--Riemann sur $\overline{\B_y \B_s}$ impliquent alors que notre forme sur $\Hom_\scB(\B_z, \B_y \B_s)$ est non d\'eg\'en\'er\'ee, ce qui implique la conjecture de Soergel pour l'\'el\'ement $x$.

Puisque les relations de Hodge--Riemann sont v\'erifi\'ees sur $\overline{\B_y \B_s}$, le th\'eor\`eme de Lefschetz difficile est vrai pour cet espace par le Lemme~\ref{lemm:HR-hL}. Ceci implique le th\'eor\`eme de Lefschetz difficile pour son sous-espace $\overline{\B_x}$. Les relations de Hodge--Riemann se transmettent \'egalement clairement de $\overline{\B_y \B_s}$ \`a $\overline{\B_x}$, puisque la restriction de $\langle -,- \rangle_{\B_y\B_s}$ \`a $\B_x$ co\"incide avec $\langle -,- \rangle_{\B_x}$ \`a un scalaire strictement positif pr\`es (voir notamment le Lemme~\ref{lemm:restriction-forme}) et puisque le degr\'e minimal en lequel ces espaces sont non nuls est le m\^eme (\`a savoir $-n-1$), voir la Remarque~\ref{rema:degre-Bw}\eqref{it:degre-Bw}.
\end{proof}

La Proposition~\ref{prop:ew1-etape1} ram\`ene la preuve du Th\'eor\`eme~\ref{theo:ew1} \`a la preuve des relations de Hodge--Riemann pour $\overline{\B_w \B_s}$ quand $ws>w$, sous l'hypoth\`ese o\`u la conjecture de Soergel, le th\'eor\`eme de Lefschetz difficile et les relations de Hodge--Riemann sont connus en longueur $\leq \ell(w)$. 
La preuve de cette propri\'et\'e utilise une factorisation de l'op\'erateur de Lefschetz (c'est-\`a-dire la multiplication par $\rho$) construite \`a partir des complexes de Rouquier, comme expliqu\'e dans le paragraphe suivant.

\subsection{Complexes de Rouquier}

Pour tout $s \in S$, on note $F_s$ le complexe
\[
\cdots \to 0 \to \B_s \xrightarrow{\sm_s} R(1) \to 0 \to \cdots \quad \in C^{\mathrm{b}}(\scB),
\]
o\`u $\B_s$ est en degr\'e cohomologique $0$ et le morphisme $\sm_s : \B_s \to R(1)$ est d\'efini par $r \otimes r' \mapsto rr'$. Pour tout mot $\uw = (s_1, \cdots, s_n)$ en $S$, on d\'efinit alors le complexe
\[
F_{\uw} := F_{s_1} \otimes_R \cdots \otimes_R F_{s_n}.
\]
Pour \'eviter les confusions avec la $\Z$-graduation des bimodules, on notera ${}^n \hspace{-1pt} F_{\uw}$ pour le terme en degr\'e cohomologique $n$ du complexe $F_{\uw}$.
La propri\'et\'e cruciale de ces complexes (qu'on n'utilisera pas dans la preuve ci-dessous) est due \`a Rouquier (voir~\cite[Proposition~9.4 et~\S 9.3]{rouquier}) et affirme que, si $\uw$ est une expression r\'eduite, alors l'image de $F_{\uw}$ dans la cat\'egorie homotopique $\Kb(\scB)$ ne d\'epend que de l'image de $\uw$ dans $W$, \`a isomorphisme canonique pr\`es. 

Une propri\'et\'e que nous utiliserons ci-dessous, et qui est beaucoup plus facile \`a v\'erifier (par exemple par r\'ecurrence sur la longueur), est que pour toute expression r\'eduite $\uw$ d'image $w$ dans $W$ on a
\begin{equation}
\label{eqn:coh-Rouquier}
\mathsf{H}^i(F_{\uw} \otimes_R \R) = \begin{cases}
\R(-\ell(w)) & \text{si $i=0$;} \\
0 & \text{sinon.}
\end{cases}
\end{equation}

L'\'enonc\'e suivant joue un r\^ole crucial dans la preuve pr\'esent\'ee au~\S\ref{ss:preuve-ew1}\footnote{Cet \'enonc\'e n'est pas utilis\'e sous cette forme dans la preuve originale de~\cite{ew1}. Il apparait explicitement dans~\cite{williamson,ew2} (o\`u les hypoth\`eses consid\'er\'ees ici sont connues d'apr\`es~\cite{ew1}), et permet de simplifier l\'eg\`erement la preuve de~\cite{ew1}, comme on le verra au~\S\ref{ss:preuve-ew1}.}. Dans cet \'enonc\'e, on notera $M$ pour le morphisme $\id_M$.

\begin{lemm}
\label{lemm:morphisme-Lefschetz}
Soit $n \geq 1$.
Supposons la conjecture de Soergel connue en longueur $\leq n$.
Supposons de plus que pour tout $y \in W$ tel que $\ell(y) \leq n-2$ et tout $s \in S$ tel que $ys>y$, le triplet $(\overline{\B_y \B_s}, \rho \cdot (-), \langle -,- \rangle_{\B_y \B_s})$ v\'erifie les relations de Hodge--Riemann\footnote{Comme remarqu\'e au~\S\ref{ss:enonces}, cette hypoth\`ese entraine que les relations de Hodge--Riemann sont connues en longueur $\leq n-1$.\label{fn:HR}} (o\`u la forme invariante sur $\B_y\B_s$ est celle fournie par le Lemme~{\rm \ref{lemm:formes}} \`a partir des formes d'intersection sur $\B_y$ et $\B_s$). Soit $w \in W$ de longueur $n$, et soit $\uw$ une expression r\'eduite pour $w$.

Alors il existe un bimodule de Soergel $D$ muni d'une forme invariante sym\'etrique non d\'eg\'en\'er\'ee $\langle -,- \rangle_D$ et un morphisme $d : \B_w \to D(1)$ tels que :
\begin{enumerate}
\item
\label{it:morphisme-Lefschetz-1}
$D$ est une somme directe de bimodules de la forme $\B_z$ avec $z<w$ (sans d\'ecalage) ;
\item
\label{it:morphisme-Lefschetz-2}
le degr\'e minimal en lequel $D$ est non nul est $-n+1$, et $D$ est concentr\'e en degr\'es de la m\^eme parit\'e que $-n+1$ ;
\item
\label{it:Rouquier-HR}
le triplet $(\overline{D}, \rho \cdot (-), \langle -,-\rangle_D)$ v\'erifie les relations de Hodge--Riemann ;
\item
on a $d^* \circ d = \rho \B_w - \B_w w^{-1}(\rho)$, o\`u l'adjoint est pris par rapport \`a la forme d'intersection sur $\B_w$ et la forme $\langle -,- \rangle_D$ sur $D$ ;
\item
il existe des inclusions scind\'ees $\B_w \hookrightarrow {}^0 \hspace{-1pt} F_{\uw}$ et $D(1) \hookrightarrow {}^1 \hspace{-1pt} F_{\uw}$ telles que le diagramme suivant est commutatif:
\[
\xymatrix@C=2cm{
\B_w \ar[d] \ar[r]^-{d} & D(1) \ar[d] \\
{}^0 \hspace{-1pt} F_{\uw} \ar[r]^-{{}^0 \hspace{-1pt} d_{F_{\uw}}} & {}^1 \hspace{-1pt} F_{\uw}.
}
\]
\end{enumerate}
\end{lemm}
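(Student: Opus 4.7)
The plan is to construct $D$, $d$, and the form $\langle -, - \rangle_D$ from the first two terms of the Rouquier complex $F_{\uw}$. Write $\uw = (s_1, \ldots, s_n)$ and $\uw_{\hat{\imath}}$ for the word obtained by removing the $i$-th letter, so that ${}^0 F_{\uw} = \BS(\uw)$ and ${}^1 F_{\uw} = \bigoplus_{i=1}^n \BS(\uw_{\hat{\imath}})(1)$, with the Rouquier differential the signed sum of the maps $\id \otimes \cdots \otimes \sm_{s_i} \otimes \cdots \otimes \id$. Fix a split inclusion $\iota \colon \B_w \hookrightarrow \BS(\uw)$, which exists by Theorem~\ref{theo:classification}. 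The inductive Soergel conjecture in length $\leq n$ and the Hodge--Riemann relations in length $\leq n-1$ (guaranteed by footnote~\ref{fn:HR}) provide canonical decompositions of each $\BS(\uw_{\hat{\imath}})$ into shifted indecomposable bimodules $\B_z$, and these decompositions will guide the construction.

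The first step is to extract $D$ as a split summand of ${}^1 F_{\uw}$ built from unshifted $\B_z$ with $z<w$. For each $i$ such that $\uw_{\hat{\imath}}$ is reduced with image $w_{\hat{\imath}}$, the bimodule $\BS(\uw_{\hat{\imath}})$ contains $\B_{w_{\hat{\imath}}}$ as its top summand with multiplicity one and minimal internal degree $-(n-1)$; these give the ``dominant'' constituents of $D$. The remaining unshifted $\B_z$'s needed to capture the image of the Rouquier differential on $\B_w$ are identified by reducing $F_{\uw}$ to a minimal representative via Gauss elimination, using repeatedly that shifted summands of ${}^1 F_{\uw}$ coming from non-reduced $\uw_{\hat{\imath}}$'s are cancelled against matching summands of ${}^0 F_{\uw}$. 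Properties~(1) and~(2) follow by construction, since each $z$ is of length $\leq n-1$, strictly smaller than $w$ in Bruhat order, with compatible parity. Define $\langle-,-\rangle_D$ as the orthogonal direct sum of the intersection forms on the indecomposable summands of $D$: property~(3) follows because Hodge--Riemann holds for each summand by the inductive hypothesis, and HR is stable under orthogonal direct sums.

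Define $d \colon \B_w \to D(1)$ as the projection of ${}^0 d_{F_{\uw}} \circ \iota$ onto $D(1)$ along the chosen split decomposition of ${}^1 F_{\uw}$; property~(5) (commutativity of the diagram) is then automatic. For~(4), the identity $d^* \circ d = \rho \B_w - \B_w w^{-1}(\rho)$ follows from the elementary computation, in each factor $\B_s$,
\[
\rho \cdot (r \otimes r') - (r \otimes r') \cdot s(\rho) = \langle \rho, \alpha_s^\vee \rangle \cdot r c_s r',
\]
which is a consequence of the identity $\rho - s(\rho) = \langle \rho, \alpha_s^\vee\rangle \alpha_s$. Iterating through $\BS(\uw) = \B_{s_1} \cdots \B_{s_n}$ using a telescoping of $\rho - w^{-1}(\rho) = \sum_i (s_{i-1} \cdots s_1)(\rho - s_i(\cdots))$ expresses the divergence $\rho \cdot \id_{\BS(\uw)} - \id_{\BS(\uw)} \cdot w^{-1}(\rho)$ as a composition factoring through ${}^1 F_{\uw}$ via ${}^0 d_{F_{\uw}}$; restricting to $\B_w$ and computing the adjoint with respect to the intersection forms yields the required identity. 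The rescalings needed to match constants on each summand are strictly positive by~(\ref{eqn:rho-positivite}), so they can be absorbed into $\langle-,-\rangle_D$ without disturbing property~(3).

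The main obstacle is the precise identification of $D$ as a sum of \emph{unshifted} $\B_z$'s together with a compatible split inclusion $D(1) \hookrightarrow {}^1 F_{\uw}$ through which the Rouquier differential restricted to $\B_w$ factors; this requires a careful analysis of the minimal form of the Rouquier complex and coordinating the decompositions with the intersection forms, which is the technical heart of the argument. The verification of~(4) then amounts to bookkeeping the iterated divergence identity through this construction.
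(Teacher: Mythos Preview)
There is a genuine gap in your verification of property~(3). You claim that ``HR is stable under orthogonal direct sums,'' but this is false for the definition used in the paper: the sign $(-1)^{i/2}$ in the Hodge--Riemann condition is measured from the \emph{global} minimal degree of the whole space. If $D$ contains summands $\B_z$ with $\ell(z)<n-1$ (which it typically does), then the minimal degree of such a summand is strictly larger than $\min(D)=-n+1$, and its intersection form enters $D$ with the \emph{wrong} sign on primitives. Concretely, a copy of $\B_z$ with $\ell(z)=n-3$ satisfies HR with its own $\min=-(n-3)$, giving a positive-definite form on its degree~$-(n-3)$ primitives; but inside $D$ that degree is $\min(D)+2$, where HR demands a \emph{negative}-definite form. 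So taking the naive orthogonal sum of intersection forms destroys~(3). A symptom of this gap is that your argument never uses the second hypothesis of the lemma (HR for the bimodules $\overline{\B_y\B_s}$ with $\ell(y)\le n-2$); if your approach worked, that hypothesis would be superfluous.

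The paper avoids this by building $D$ \emph{inductively} on $\ell(w)$ rather than extracting it directly from the minimal Rouquier complex. Writing $w=w's$ with $\ell(w')=n-1$, one takes the inductively constructed $(D',d')$ for $w'$, splits $D'=D'^{\uparrow}\oplus D'^{\downarrow}$ according to whether $zs>z$ or $zs<z$, and after a Gaussian elimination step sets $D=\B_{w'}\oplus D'^{\uparrow}\B_s$ with the form coming from the \emph{product} construction of Lemma~\ref{lemm:formes} (not the sum of intersection forms on indecomposable pieces). This form decomposes as $\bigoplus_y V_y\otimes\B_y$ where the multiplicity space $V_y$ carries a $(-1)^{\frac{-n+1+\ell(y)}{2}}$-definite form --- exactly the sign correction needed for~(3) --- and HR for each $V_y\otimes\overline{\B_y\B_s}$ summand of $\overline{D'^{\uparrow}\B_s}$ then follows from the hypothesis on $\overline{\B_y\B_s}$. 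Your telescoping idea for~(4) is morally right, but note that the identity $d^*\circ d=\rho\B_w-\B_w w^{-1}(\rho)$ is sensitive to the choice of $\langle-,-\rangle_D$; in the paper it is verified step by step along the induction, and changing the form to the naive orthogonal sum would break it as well.
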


\begin{proof}
On proc\`ede par r\'ecurrence sur $n$. Si $n=1$ on a $w=s \in S$, et on peut choisir $D=R$ et $d=\sqrt{\langle \rho, \alpha_s^\vee \rangle}\sm_s$. Ici l'adjoint de $d$ est $\sqrt{\langle \rho, \alpha_s^\vee \rangle} \delta_s$, o\`u le morphisme $\delta_s : R \to \B_s(1)$ est d\'efini par
\[
\delta_s(r) = r \cdot \frac{1}{2}(\alpha_s \otimes 1 + 1 \otimes \alpha_s),
\]
et il est facile de voir que
\begin{equation}
\label{eqn:polynomial-sliding}
\rho \B_s - \B_s s(\rho) = \langle \rho, \alpha_s^\vee \rangle \delta_s \circ \sm_s.
\end{equation}

Supposons maintenant le r\'esultat v\'erifi\'e pour $n$, et v\'erifions-le pour $n+1$. Soient $w \in W$ de longueur $n$ et $\uw$ une expression r\'eduite pour $w$, et soit $s \in S$ tel que $ws>w$. On consid\`ere l'expression r\'eduite $\uw s$ de $ws$. Soient $D$, $\langle -, - \rangle_D$ et $d : \B_w \to D(1)$ comme dans l'\'enonc\'e (pour l'\'el\'ement $w$), et notons $a : \B_w \hookrightarrow {}^0 \hspace{-1pt} F_{\uw}$ et $b : D(1) \hookrightarrow {}^1 \hspace{-1pt} F_{\uw}$ des choix d'inclusions scind\'ees telles que $b \circ d = {}^0 \hspace{-1pt} d_{F_{\uw}} \circ a$. \'Ecrivons $D=D^\uparrow \oplus D^{\downarrow}$ o\`u $D^\uparrow$, resp.~$D^{\downarrow}$, est une somme d'objets de la forme $\B_z$ avec $zs>z$, resp.~avec $zs<z$. Puisque $\Hom_{\scB}(D^\uparrow, \D(D^{\downarrow})) = \Hom_{\scB}(D^\downarrow, \D(D^{\uparrow}))=0$ d'apr\`es~\eqref{eqn:Hom-BxBy} et nos hypoth\`eses, cette d\'ecomposition est automatiquement orthogonale pour $\langle -, - \rangle_D$.

D'apr\`es~\eqref{eqn:rho-positivite} on a $\langle w^{-1} \rho, \alpha_s^\vee \rangle > 0$, de sorte qu'on peut d\'efinir
le morphisme
\[
f := \begin{pmatrix}
\sqrt{\langle w^{-1} \rho, \alpha_s^\vee \rangle} \B_w \sm_s \\
d \B_s
\end{pmatrix} : \B_w \B_s \to \B_w(1) \oplus D \B_s (1).
\]
Il est facile de v\'erifier que si on munit $\B_w \B_s$ de la forme $\langle -,- \rangle_{\B_w\B_s}$ fournie par le Lemme~\ref{lemm:formes}, et $\B_w \oplus D \B_s$ de la somme de la forme d'intersection de $\B_w$ et de la forme $\langle -,- \rangle_{D \B_s}$ fournie par le Lemme~\ref{lemm:formes}, on a
\[
f^* = \begin{pmatrix} 
\sqrt{\langle w^{-1} \rho, \alpha_s^\vee \rangle} \B_w \delta_s & d^* \B_s
\end{pmatrix},
\]
de sorte qu'en utilisant une formule similaire \`a~\eqref{eqn:polynomial-sliding} on trouve que
\[
f^* \circ f = \rho \B_w \B_s - \B_w \B_s (ws)^{-1}(\rho).
\]
De plus, on a un diagramme commutatif
\begin{equation}
\label{eqn:diag-comm-Rouquier}
\vcenter{
\xymatrix@C=3cm{
\B_w \B_s \ar[d]_-{a\B_s} \ar[r]^-{f} & \B_w (1) \oplus D\B_s(1) \ar[d]^-{\left( \begin{smallmatrix} \frac{1}{\sqrt{\langle w^{-1} \rho, \alpha_s^\vee \rangle}} a(1) & 0 \\ 0 & b\B_s \end{smallmatrix} \right)} \\
{}^0 \hspace{-1pt} F_{\uw s} = {}^0 \hspace{-1pt} F_{\uw} \B_s \ar[r]^-{{}^0 \hspace{-1pt} d_{F_{\uw s}}=\left( \begin{smallmatrix} {}^0 \hspace{-1pt} F_{\uw s} \sm_s \\ {}^0 \hspace{-1pt} d_{F_{\uw}} \B_s \end{smallmatrix} \right)} & {}^1 \hspace{-1pt} F_{\uw s} = {}^0 \hspace{-1pt} F_{\uw}(1) \oplus {}^1 \hspace{-1pt} F_{\uw} \B_s.
}
}
\end{equation}

Le bimodule gradu\'e $\B_{ws}$ est un facteur direct de $\B_w\B_s$. La conjecture de Soergel en longueur $\leq n+1$ implique qu'on a $\B_w\B_s \cong \B_{ws} \oplus E$, o\`u $E$ est une somme de termes de la forme $\B_z$ (sans d\'ecalage), et que de plus une telle d\'ecomposition est automatiquement orthogonale pour la forme $\langle -,- \rangle_{\B_w\B_s}$ (voir notamment~\eqref{eqn:BwBs-perverse},~\eqref{eqn:Hom-BxBy} et~\eqref{eqn:D-Bw}). On peut supposer que la restriction de cette forme \`a $\B_{ws}$ est $\langle -,- \rangle_{\B_{ws}}$, et alors l'adjoint de l'inclusion $\B_{ws} \hookrightarrow \B_w\B_s$ est la projection $\B_w\B_s \twoheadrightarrow \B_{ws}$ orthogonale \`a $E$. Ainsi, si $g : \B_{ws} \to  \B_w (1) \oplus D\B_s(1)$ est la compos\'ee de $f$ et de cette inclusion, on a
\begin{equation}
\label{eqn:adjoint-f}
g^* \circ g = \rho \B_{ws} - \B_{ws} (ws)^{-1}(\rho),
\end{equation}
et un analogue du diagramme commutatif~\eqref{eqn:diag-comm-Rouquier}.
Pour conclure, il nous reste maintenant \`a ``simplifier'' le terme $\B_w (1) \oplus D\B_s(1)$ de sorte qu'il satisfasse les conditions de l'\'enonc\'e.

La commutativit\'e de~\eqref{eqn:diag-comm-Rouquier}, le fait que ${}^1 \hspace{-1pt} d_{F_{\uw s}} \circ {}^0 \hspace{-1pt} d_{F_{\uw s}}=0$ et l'injectivit\'e de $b$ impliquent que la compos\'ee de $g$ avec le morphisme
\begin{equation}
\label{eqn:morph-elimination-gaussienne}
\B_w (1) \oplus D\B_s(1) \xrightarrow{\left( \begin{smallmatrix} d(1) & D\sm_s(1) \\ 0 & {}^1 \hspace{-1pt} d_{F_{\uw}} \circ (b\B_s) \end{smallmatrix} \right)} D(2) \oplus {}^2 F_{\uw} \B_s
\end{equation}
est nulle.
Le bimodule $D \B_s$ se d\'ecompose en une somme orthogonale $D^\uparrow \B_s \oplus D^{\downarrow} \B_s$. Ici on a $D^{\downarrow} \B_s \cong D^\downarrow(-1) \oplus D^\downarrow (1)$ d'apr\`es le Lemme~\ref{lemm:BwBs}\eqref{it:ws-1}, et cette d\'ecomposition peut \^etre choisie de telle sorte que la compos\'ee $D^\downarrow (1) \hookrightarrow D^{\downarrow} \B_s \xrightarrow{D^\downarrow \sm_s} D^\downarrow(1)$ est l'identit\'e (voir la preuve de~\cite[Lemma~6.5]{ew1}). 
Ainsi il existe des facteurs directs de $\B_w (1) \oplus D\B_s(1)$ et $D(2) \oplus {}^2 F_{\uw} \B_s$ qui s'identifient \`a $D^{\downarrow} (2)$, et tels que le facteur de l'application consid\'er\'ee dans~\eqref{eqn:morph-elimination-gaussienne} sur ces termes s'identifie \`a $\id_{D^{\downarrow}(2)}$. 
Ceci implique que l'application $g$ s'\'ecrit comme la compos\'ee\footnote{Cet argument est un cas particulier du proc\'ed\'e d'\emph{\'elimination gaussienne} qui permet, dans un complexe, d'\'eliminer un facteur direct de la forme $M \xrightarrow{\id} M$ sans changer le complexe \`a homotopie pr\`es.} de son facteur $h : \B_{ws} \to \B_w(1) \oplus D^\uparrow \B_s(1) \oplus D^{\downarrow}$ par le morphisme 
$\B_w(1) \oplus D^\uparrow \B_s(1) \oplus D^{\downarrow} \to \bigl( \B_w(1) \oplus D^\uparrow \B_s(1) \oplus D^{\downarrow} \bigr) \oplus D^{\downarrow}(2)$ donn\'e par $\left( \begin{smallmatrix} \id \\ - \gamma \end{smallmatrix} \right)$, o\`u $\gamma$ est la compos\'ee de l'inclusion $\B_w(1) \oplus D^\uparrow \B_s(1) \oplus D^{\downarrow} \hookrightarrow \B_w (1) \oplus D\B_s(1)$, suivie de~\eqref{eqn:morph-elimination-gaussienne}, puis de la projection sur $D^{\downarrow}(2)$. On remplace alors $g$ par $h$, et on obtient un diagramme similaire \`a~\eqref{eqn:diag-comm-Rouquier} (o\`u le terme en haut \`a droite est maintenant $\B_w(1) \oplus D^\uparrow \B_s(1) \oplus D^{\downarrow}$, et o\`u la fl\`eche de droite a \'et\'e modifi\'ee de la fa{\c c}on appropri\'ee).


Maintenant, on remarque que $\Hom_{\scB}(\B_{ws}, D^\downarrow) = 0$ d'apr\`es~\eqref{eqn:Hom-BxBy}, de sorte qu'on doit avoir $h(\B_{ws}) \subset \B_w (1) \oplus D^\uparrow \B_s(1)$, et qu'on peut donc consid\'erer $h$ comme un morphisme $\B_{ws} \to \B_w (1) \oplus D^\uparrow \B_s(1)$. On a alors
\[
h^* \circ h = \rho \B_{ws} - \B_{ws} (ws)^{-1}(\rho) ;
\]
en effet, cela d\'ecoule de~\eqref{eqn:adjoint-f} et du fait que la composante de $g$ donn\'ee par un morphisme $\B_{ws} \to D^{\downarrow}$ est nulle et que la composante de $g^*$ donn\'ee par un morphisme $D^{\downarrow}(2) \to \B_{ws}(2)$ \'egalement (pour la m\^eme raison que ci-dessus).




Pour conclure, il reste \`a voir que notre module $\B_w \oplus D^\uparrow \B_s$ v\'erifie les conditions~\eqref{it:morphisme-Lefschetz-1}--\eqref{it:Rouquier-HR}.
Pour~\eqref{it:morphisme-Lefschetz-1}, cela d\'ecoule de la conjecture de Soergel en longueur $\leq n$ et~\eqref{eqn:BwBs-perverse}, qui montrent que
$D^\uparrow \B_s$ est une somme d'objets $\B_z$ avec $z < ws$. La condition~\eqref{it:morphisme-Lefschetz-2} est claire puisque le degr\'e minimal en lequel $\B_w$, resp.~$D^\uparrow$, est non nul est $-n$, resp.~$\geq -n+1$.
En ce qui concerne~\eqref{it:Rouquier-HR}, pour le facteur $\B_w$ cela d\'ecoule de nos hypoth\`eses (voir la Note~\eqref{fn:HR}). Pour $D^\uparrow \B_s$, on remarque qu'on a une d\'ecomposition orthogonale
\[
D^\uparrow = \bigoplus_{\substack{\ell(y) \leq n-1 \\ \ell(y) \equiv -n+1 \pmod 2 \\ ys>y}} V_y \otimes_\R \B_y
\]
o\`u chaque $\B_y$ est muni de sa forme d'intersection et $V_y := \Hom_{\scB}(\B_y, D^\uparrow)$ est muni d'une forme bilin\'eaire sym\'etrique $(-1)^{\frac{-n+1+\ell(y)}{2}}$-d\'efinie. Donc la propri\'et\'e voulue d\'ecoule de nos hypoth\`eses sur les bimodules $\B_y\B_s$.
\end{proof}

\begin{rema}
\label{rema:positivite-inverse}
Dans~\cite{ew1} les auteurs prouvent des r\'esultats plus forts sur les complexes $F_{\uw}$ (mais qui ne sont pas n\'ecessaires \`a la preuve du Th\'eor\`eme~\ref{theo:ew1}). Ils d\'emontrent notamment que pour tout $w \in W$ et toute expression r\'eduite $\uw$ pour $w$, si la conjecture de Soergel est connue pour tous les \'el\'ements $y \leq w$, il existe un facteur direct $F'_{\uw} \subset F_{\uw}$ tel que l'inclusion est un isomorphisme dans $\Kb(\scB)$, et tel que pour tout $i$, ${}^i \hspace{-1pt} F'_{\uw}$ est 
une somme d'objets de la forme $\B_z(i)$ avec $z \in W$ ; voir~\cite[Theorem~6.9]{ew1}. En utilisant cela et le Th\'eor\`eme~\ref{theo:ew1}, on peut d\'emontrer que si on \'ecrit $H_w = \sum_y g_{y,w} \cdot \uH_y$ avec $g_{y,w} \in \Z[\vv,\vv^{-1}]$, alors $(-1)^{\ell(w)-\ell(y)} g_{y,w} \in \Z_{\geq 0}[\vv]$ pour tous $y,w \in W$ (voir~\cite[Remark~6.10]{ew1}).

Cette propri\'et\'e a \'et\'e \'etendue \`a un cadre ``tordu'' par Gobet~\cite{gobet} pour d\'emontrer une conjecture combinatoire de Dyer.
\end{rema}

\subsection{Preuve du Th\'eor\`eme~{\rm \ref{theo:ew1}}}
\label{ss:preuve-ew1}

La preuve du Th\'eor\`eme~\ref{theo:ew1} va se faire par r\'ecurrence sur $\ell(w)$. Elle n\'ecessite encore un ingr\'edient : la d\'eformation. En fait, pour $w \in W$, $s \in S$ et $\zeta \in \R_{\geq 0}$, on consid\`ere ``l'op\'erateur de Lefschetz d\'eform\'e''
\[
L^{w,s}_\zeta := (\rho \cdot (-)) \B_s + \B_w (\zeta \rho \cdot (-)) : \B_w \B_s \to \B_w \B_s(2).
\]
On notera de m\^eme le morphisme induit de $\overline{\B_w\B_s}$ vers $\overline{\B_w\B_s}(2)$.

Des calculs explicites et relativement \'el\'ementaires montrent les r\'esultats suivants (voir~\cite[Theorem~5.1]{ew1} pour~\eqref{it:Lzeta-1} et \cite[Theorem~6.19]{ew1} pour~\eqref{it:Lzeta-2}).

\begin{prop}
\label{prop:Lzeta}
\begin{enumerate}
\item
\label{it:Lzeta-1}
Soient $w \in W$ et $s \in S$.
Supposons la conjecture de Soergel et les relations de Hodge--Riemann v\'erifi\'ees pour $w$. Alors le triplet $(\overline{\B_w \B_s}, L_\zeta^{w,s}, \langle -,- \rangle_{\B_w \B_s})$ v\'erifie les relations de Hodge--Riemann si $\zeta \gg 0$.
\item
\label{it:Lzeta-2}
Supposons que $\zeta>0$, et soient $w \in W$ et $s \in S$. Si $ws<w$ et si le th\'eor\`eme de Leftschetz difficile est v\'erifi\'e pour $w$, alors $(\overline{\B_w \B_s}, L_\zeta^{w,s})$ v\'erifie le th\'eor\`eme de Lefschetz difficile.
\end{enumerate}
\end{prop}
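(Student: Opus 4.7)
Dans les deux parties, mon plan consiste à ramener l'énoncé à un calcul matriciel explicite de $L_\zeta^{w,s}$ dans une décomposition appropriée de $\overline{\B_w \B_s}$. L'observation commune essentielle est que les deux opérateurs composant $L_\zeta^{w,s}$ commutent sur $\B_w \otimes_R \B_s$ (conséquence directe de la relation $(xr) \otimes y = x \otimes (ry)$ du produit tensoriel sur $R$, qui donne $\id_{\B_w} \otimes (\rho \cdot \id_{\B_s}) = (\rho \cdot \text{à droite})_{\B_w} \otimes \id_{\B_s}$), ce qui permet de développer $L_\zeta^k$ par la formule du binôme.

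Pour la Partie~\eqref{it:Lzeta-1}, j'exploiterais la décomposition en $R$-modules à droite $\B_s \cong R(1) \oplus R(-1)$ fournie par la base $\{c_e, c_s\}$, qui induit $\overline{\B_w \B_s} \cong \overline{\B_w}(1) \oplus \overline{\B_w}(-1)$ comme espaces vectoriels gradués. Un calcul direct, utilisant les formules $f \cdot c_e = c_e \cdot s(f) + c_s \cdot \partial_s(f)$ (où $\partial_s$ est l'opérateur de Demazure $(f-s(f))/\alpha_s$) et $f \cdot c_s = c_s \cdot f$, montre que dans cette décomposition
\[
L_\zeta^{w,s} = \begin{pmatrix} \rho \cdot & 0 \\ \zeta \langle \rho, \alpha_s^\vee \rangle \cdot \id & \rho \cdot \end{pmatrix}.
\]
L'opérateur hors-diagonale $N$ étant nilpotent d'indice $2$ et commutant à la diagonale, il vient $L_\zeta^k = (\rho \cdot)^k + k \zeta \langle \rho, \alpha_s^\vee \rangle (\rho \cdot)^{k-1} N$, donc chaque forme de Lefschetz $(x,y) \mapsto \langle x, L_\zeta^k y \rangle_{\B_w \B_s}$ est affine en $\zeta$. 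Pour $\zeta \gg 0$, la signature est contrôlée par le terme linéaire en $\zeta$, lequel, après expansion dans la décomposition et utilisation de l'expression explicite de $\langle -,-\rangle_{\overline{\B_w \B_s}}$ dérivée du Lemme~\ref{lemm:formes} et de la forme hyperbolique induite sur $\overline{\B_s}$, se ramène à des formes de Lefschetz sur $\overline{\B_w}$ aux degrés voisins. Les relations de Hodge--Riemann pour $\overline{\B_w}$ (hypothèse) fourniront alors les signes requis. Le point délicat sera le décompte soigneux des signatures à travers les décalages de degrés entre les deux facteurs $\overline{\B_w}(\pm 1)$.

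Pour la Partie~\eqref{it:Lzeta-2}, j'utiliserais plutôt l'isomorphisme de bimodules $\B_w \B_s \cong \B_w(1) \oplus \B_w(-1)$ du Lemme~\ref{lemm:BwBs}\eqref{it:ws-1} (valable car $ws < w$), induisant $\overline{\B_w \B_s} \cong \overline{\B_w}(1) \oplus \overline{\B_w}(-1)$ comme $R$-modules à gauche gradués. Dans cette décomposition, $\rho \cdot$ agit diagonalement ; mais les dimensions graduées de $\overline{\B_w}(1)$ et $\overline{\B_w}(-1)$ n'étant pas symétriques par rapport au degré~$0$, l'opérateur $\rho \cdot$ \emph{seul} ne vérifierait pas le théorème de Lefschetz difficile sur $\overline{\B_w \B_s}$ --- le mélange apporté par $B := \id_{\B_w} \otimes (\rho \cdot \id_{\B_s})$ sera donc crucial. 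Comme $B$ est un endomorphisme de bimodules de degré~$2$, il correspond via l'isomorphisme à une matrice $2 \times 2$ de morphismes de bimodules ; en particulier l'entrée $\B_w(1) \to \B_w(-1)(2) = \B_w(1)$ est un endomorphisme de bimodules de degré~$0$ de $\B_w$, donc un scalaire par la conjecture de Soergel pour $w$ (hypothèse) et~\eqref{eqn:Hom-BxBy}. L'étape cruciale, qui constitue la principale difficulté technique, sera de vérifier par un calcul explicite que ce scalaire est non nul. Une fois cela établi, pour $\zeta > 0$ l'injectivité de $L_\zeta^k$ sur $(\overline{\B_w \B_s})^{-k}$ résultera d'un argument d'algèbre linéaire combinant ce scalaire non nul (multiplié par $\zeta$) avec le théorème de Lefschetz difficile pour $\overline{\B_w}$ (hypothèse) ; l'égalité des dimensions graduées de $\overline{\B_w \B_s}$ par autodualité~\eqref{eqn:D-Bw} permettra alors de conclure.
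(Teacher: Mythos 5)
Le texte de l'exposé ne donne pas de preuve de cette proposition : il renvoie aux Théorèmes~5.1 et~6.19 de~\cite{ew1} en qualifiant les arguments de \og calculs explicites et relativement élémentaires \fg. Votre stratégie --- décomposer $\overline{\B_w\B_s}$ et calculer explicitement la matrice de $L_\zeta^{w,s}$ --- est donc bien dans l'esprit de la source. Le cadre de départ est correct : la commutativité des deux composantes de $L_\zeta^{w,s}$ (parce que $\id_{\B_w}\otimes(\rho\cdot)$ coïncide avec la multiplication \emph{à droite} par $\rho$ sur $\B_w$, qui commute avec l'action à gauche), la décomposition $\B_s\cong R(1)\oplus R(-1)$ en $R$-modules à droite via la base $\{c_e,c_s\}$, et la matrice triangulaire inférieure $\left(\begin{smallmatrix}\rho & 0\\ \zeta\langle\rho,\alpha_s^\vee\rangle & \rho\end{smallmatrix}\right)$ sont tous exacts.

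En revanche, pour le point~\eqref{it:Lzeta-1}, l'argument central \og la signature est contrôlée par le terme linéaire en $\zeta$\fg{} est insuffisant tel quel. Le terme linéaire en $\zeta$ de la forme de Lefschetz $(x,y)\mapsto\langle x,L_\zeta^k y\rangle$ vaut $k\zeta\langle\rho,\alpha_s^\vee\rangle\,\langle x,\rho^{k-1}Ny\rangle$ où $N$ annule le facteur $\overline{\B_w}(-1)$ : c'est une forme \emph{dégénérée} sur $(\overline{\B_w\B_s})^{-k}$, et dans le cas extrême $k=0$ la forme ne dépend pas du tout de $\zeta$. Or la signature d'une famille $A+\zeta B$ avec $B$ dégénérée n'est pas déterminée par $B$ seule pour $\zeta\gg 0$. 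Par ailleurs les sous-espaces primitifs $P^{-k}_{L_\zeta}$ dépendent eux-mêmes de $\zeta$, ce qui n'est pas pris en compte. Un argument correct doit passer, comme vous le pressentez, par le critère de signature du lemme précédant le Corollaire~\ref{coro:continuite} (c'est-à-dire : établir d'abord le Lefschetz difficile pour $\zeta\gg 0$, puis calculer la signature de la forme \emph{entière}), mais le calcul de signature lui-même requiert une analyse par blocs de la matrice de Gram (dont les blocs diagonaux et hors-diagonaux mélangent $\overline{\partial_s\langle-,-\rangle_{\B_w}}$ et $\langle-,-\rangle_{\overline{\B_w}}$, ce qui est plus subtil que ce que vous esquissez), et non une simple dominance du terme linéaire.

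Pour le point~\eqref{it:Lzeta-2}, deux problèmes. D'abord, vous invoquez \og la conjecture de Soergel pour $w$ (hypothèse)\fg{} pour conclure que l'entrée $\B_w(1)\to\B_w(1)$ de $\id_{\B_w}\otimes(\rho\cdot)$ est un scalaire ; mais l'hypothèse de~\eqref{it:Lzeta-2} est uniquement le théorème de Lefschetz difficile pour $w$, pas la conjecture de Soergel. (En pratique, au point où la Proposition~\ref{prop:Lzeta}\eqref{it:Lzeta-2} est utilisée dans la preuve du Théorème~\ref{theo:ew1}, la conjecture de Soergel est effectivement connue, mais ce n'est pas ce que l'énoncé vous donne.) Ensuite, même en admettant ce scalaire non nul, la matrice $2\times 2$ de $L_\zeta$ dans la décomposition de bimodules $\B_w(1)\oplus\B_w(-1)$ comporte trois autres entrées (de degrés positifs) qui ne sont a priori pas nulles, et le \og simple argument d'algèbre linéaire\fg{} permettant de conclure au théorème de Lefschetz difficile n'est pas détaillé : il faut montrer concrètement que, malgré ces termes parasites, $L_\zeta^k$ est injectif en degré $-k$, ce qui est précisément la partie non triviale du calcul de~\cite[Theorem~6.19]{ew1}.
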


On peut maintenant donner la preuve du Th\'eor\`eme~\ref{theo:ew1}. En fait on va d\'emontrer par r\'ecurrence sur $n$ que :
\begin{enumerate}
\item
\label{it:preuve-1}
la conjecture de Soergel, le th\'eor\`eme de Lefschetz difficile et les relations de Hodge--Riemann sont vrais en longueur $\leq n$;
\item
\label{it:preuve-2}
pour tout $\zeta \geq 0$, tout $w \in W$ et tout $s \in S$ tels que $ws>w$ et $\ell(ws) \leq n$, le triplet $(\overline{\B_w \B_s}, L_\zeta^{w,s}, \langle -,- \rangle_{\B_w \B_s})$ v\'erifie les relations de Hodge--Riemann (o\`u, comme d'habitude, la forme $\langle -,- \rangle_{\B_w\B_s}$ est celle fournie par le Lemme~\ref{lemm:formes} \`a partir des formes d'intersection sur $\B_w$ et $\B_s$).
\end{enumerate}

Ces propri\'et\'es sont \'evidentes si $n \in \{0,1\}$.
On les suppose donc v\'erifi\'ees pour un $n \geq 1$.
D'apr\`es la Proposition~\ref{prop:ew1-etape1}, pour d\'emontrer~\eqref{it:preuve-1} en longueur $n+1$ il suffit de v\'erifier que pour tout $w \in W$ tel que $\ell(w)=n$ et tout $s \in S$ tel que $ws>w$ la donn\'ee de Hodge--Riemann $(\overline{\B_w \B_s}, \rho \cdot (-), \langle -, - \rangle_{\B_w \B_s})$ v\'erifie les relations de Hodge--Riemann. Puisque $\rho \B_w\B_s = L^{w,s}_0$, ceci est un cas particulier de la propri\'et\'e~\eqref{it:preuve-2} au rang $n+1$. Et pour d\'emontrer cette propri\'et\'e, d'apr\`es le Corollaire~\ref{coro:continuite} et la Proposition~\ref{prop:Lzeta}\eqref{it:Lzeta-1}, il suffit de v\'erifier que le couple $(\overline{\B_w \B_s}, L^{w,s}_\zeta)$ v\'erifie le th\'eor\`eme de Lefschetz difficile pour tout $\zeta \geq 0$.

Soient donc $w \in W$ de longueur $n$, $s \in S$ tel que $ws>w$, et $\zeta \geq 0$.
Consid\'erons tout d'abord le cas $\zeta=0$. Nos hypoth\`eses permettent d'appliquer le Lemme~\ref{lemm:morphisme-Lefschetz} \`a $w$ : soient $D$, $\langle -, - \rangle_D$ et $d : \B_w \to D(1)$ les donn\'ees fournies par ce lemme. D'apr\`es~\eqref{eqn:rho-positivite} on a $\langle w^{-1} \rho, \alpha_s^\vee \rangle > 0$, ce qui permet de consid\'erer le morphisme
\[
d' := \begin{pmatrix}
\sqrt{\langle w^{-1} \rho, \alpha_s^\vee \rangle} \B_w \sm_s \\
d \B_s
\end{pmatrix} : \B_w \B_s \to \B_w(1) \oplus D\B_s(1)
\]
(comme dans la preuve du Lemme~\ref{lemm:morphisme-Lefschetz}).
Ce morphisme est compatible avec la diff\'erentielle en degr\'e $0$ d'un complexe de Rouquier associ\'e \`a $ws$ (au m\^eme sens que dans le Lemme~\ref{lemm:morphisme-Lefschetz}), de sorte que~\eqref{eqn:coh-Rouquier} implique que $\overline{d'}:=d' \otimes_R \R$ est injectif en degr\'es $\leq n$. On a \'egalement, pour les formes bilin\'eaires appropri\'ees,
\begin{equation}
\label{eqn:adj-main-proof}
(d')^* \circ d' = \rho \B_w \B_s - \B_w \B_s (ws)^{-1}(\rho).
\end{equation}

Comme dans la preuve du Lemme~\ref{lemm:morphisme-Lefschetz}, on a une d\'ecomposition canonique et orthogonale $D = D^\uparrow \oplus D^\downarrow$ o\`u les facteurs directs de $D^\uparrow$, resp.~$D^\downarrow$, sont de la forme $\B_z$ avec $zs>z$, resp.~$zs<z$. On notera $\langle -,- \rangle_{D^\uparrow}$ et $\langle -,- \rangle_{D^\downarrow}$ les restrictions de $\langle -,- \rangle_D$ \`a ces facteurs. On a \'egalement une d\'ecomposition non canonique et non orthogonale $D^\downarrow \B_s \cong D^\downarrow (1) \oplus D^\downarrow (-1)$, qu'on peut choisir de telle sorte que la compos\'ee $D^\downarrow(1) \hookrightarrow D^\downarrow \B_s \xrightarrow{D^\downarrow \sm_s} D^\downarrow (1)$ est l'identit\'e. Dans la suite on fixe une telle d\'ecomposition. Notons que
\begin{equation}
\label{eqn:forme-Ddown}
\langle x,y \rangle_{D^\downarrow \B_s} = 0 \quad \text{pour $x,y \in D^\downarrow(1)$}
\end{equation}
(o\`u la forme $\langle -,- \rangle_{D^\downarrow \B_s}$ est celle fournie par le Lemme~\ref{lemm:formes}). En effet, la forme invariante $\langle -,- \rangle_{D^\downarrow \B_s}$ d\'efinit un morphisme
\[
D^\downarrow (1) \oplus D^\downarrow (-1) = D^\downarrow \B_s \to \D(D^\downarrow \B_s) = \D(D^\downarrow (1) \oplus D^\downarrow (-1)) \cong D^\downarrow (-1) \oplus D^\downarrow (1)
\]
(o\`u on a fix\'e un isomorphisme $\D(D^\downarrow) \cong D^\downarrow$). Le facteur de ce morphisme de $D^\downarrow (1)$ vers $\D(D^\downarrow(1)) \cong D^\downarrow (-1)$ doit \^etre nul par~\eqref{eqn:Hom-BxBy}, ce qui se traduit plus concr\`etement par~\eqref{eqn:forme-Ddown}.

Une fois ces consid\'erations \'etablies, on peut consid\'erer $d'$ comme un morphisme de $\B_w\B_s$ vers $\B_w(1) \oplus D^\uparrow \B_s(1) \oplus D^\downarrow(2) \oplus D^\downarrow$. On notera $d'=d'_1 + d'_2 + d'_3 + d'_4$ avec $d'_j$ un morphisme de $\B_w \B_s$ vers le $j$-i\`eme terme de cette d\'ecomposition, pour $j \in \{1,2,3,4\}$, et on pose $\overline{d'_j} := d'_j \otimes_R \R$.

%

Soit maintenant $i \geq 0$, et montrons que $(L_0^{w,s})^{\circ i} : (\overline{\B_w\B_s})^{-i} \to (\overline{\B_w \B_s})^i$ est un isomorphisme ou, de fa{\c c}on \'equivalente, est injective. Soit $x \in (\overline{\B_w \B_s})^{-i}$. Si $\overline{d'_4}(x) \neq 0$, alors du th\'eor\`eme de Lefschetz difficile pour $D^\downarrow$ (qui est connu par r\'ecurrence) on d\'eduit que $\rho^i \cdot \overline{d'_4}(x) \neq 0$, donc que $(L_0^{w,s})^{\circ i}(x) \neq 0$. 

On pose maintenant $V:=\overline{\B_w} \oplus \overline{D^\uparrow \B_s}$, $V':=\ker(\overline{d_4'})$, et on note $\varphi := \overline{d'_1} + \overline{d'_2} : V' \to V(1)$.
Nous allons v\'erifier que ces donn\'ees satisfont les conditions du Lemme~\ref{lemm:Lefschetz-recurrence}, ce qui ach\`evera la preuve du cas $\zeta=0$. En effet :
\begin{itemize}
\item
$V$ v\'erifie les relations de Hodge--Riemann gr\^ace \`a nos hypoth\`eses de r\'ecurrence, comme dans la preuve du Lemme~\ref{lemm:morphisme-Lefschetz} ;
\item
par le m\^eme argument ``d'\'elimination gaussienne'' que dans la preuve du Lemme~\ref{lemm:morphisme-Lefschetz}, on peut construire un diagramme commutatif similaire \`a~\eqref{eqn:diag-comm-Rouquier} dans lequel la premi\`ere ligne est $d'_1 \oplus d'_2 \oplus d'_4 : \B_w\B_s \to \B_w(1) \oplus D^\uparrow \B_s(1) \oplus D^\downarrow$ ; comme pour $\overline{d'}$ cela implique que $\overline{d'_1} \oplus \overline{d'_2} \oplus \overline{d'_4}$ est injectif en degr\'es $\leq n$ (donc a fortiori en degr\'es $\leq -1$), puis que $\varphi$ poss\`ede \'egalement cette propri\'et\'e ;
\item
$\varphi$ commute aux op\'erateurs de Lefschetz car cette application est obtenue \`a partir d'un morphisme de bimodules ;
\item
pour $x,y \in V'$, on a
\begin{multline*}
\langle \varphi(x), \varphi(y) \rangle_V = \langle \overline{d'_1}(x),\overline{d'_1}(y) \rangle_{\B_w} + \langle\overline{d'_2}(x), \overline{d'_2}(y) \rangle_{D^\uparrow \B_s} \\
\overset{\eqref{eqn:forme-Ddown}}{=} \langle \overline{d'}(x),\overline{d'}(y) \rangle_{\B_w \oplus D\B_s} \overset{\eqref{eqn:adj-main-proof}}{=} \langle x,L_0^{w,s}(y) \rangle_{V'}.
\end{multline*}
\end{itemize}


La preuve dans le cas $\zeta>0$ est bas\'ee sur les m\^emes id\'ees, mais dans ce cas on ne peut pas utiliser la d\'ecomposition $D^{\downarrow} \B_s \cong D^\downarrow (1) \oplus D^\downarrow(-1)$ car elle n'est pas compatible avec l'op\'erateur de Lefschetz d\'eform\'e. On consid\`ere encore le morphisme $d : \B_w \to D(1)$ fourni par le Lemme~\ref{lemm:morphisme-Lefschetz}, et on pose
\[
f := \begin{pmatrix}
\sqrt{\langle w^{-1} \rho, \alpha_s^\vee \rangle + \zeta \langle \rho, \alpha_s^\vee \rangle} \B_w \sm_s \\
d \B_s
\end{pmatrix} : \B_w \B_s \to \B_w(1) \oplus D\B_s(1).
\]
On peut alors v\'erifier que
\[
f^* \circ f = L_\zeta^{w,s} - \B_w\B_s (sw^{-1}(\rho) + \zeta s(\rho))
\]
(o\`u les formes invariantes sont choisies comme dans le Lemme~\ref{lemm:morphisme-Lefschetz})
et que, si $L$ est l'op\'erateur sur $\B_w(1) \oplus D\B_s(1)$ donn\'e par $\rho \B_w(1)$ sur $\B_w(1)$ et par $\rho D\B_s(1) + \zeta D\rho\B_s(1)$ sur $D\B_s(1)$, on a $\overline{f} \circ L^{w,s}_\zeta = L \circ \overline{f}$ o\`u $\overline{f}:=f \otimes_R \R$. On conclut encore une fois en utilisant le Lemme~\ref{lemm:Lefschetz-recurrence}. (Pour v\'erifier les relations de Hodge--Riemann sur $D\B_s$, on utilise les m\^emes arguments que dans la preuve du Lemme~\ref{lemm:morphisme-Lefschetz} pour se ramener au cas d'un bimodule $\B_y\B_s$. Dans ce cas, si $ys>y$ on utilise l'hypoth\`ese de r\'ecurrence. Et si $ys<y$ on utilise les deux \'enonc\'es de la Proposition~\ref{prop:Lzeta} et le Corollaire~\ref{coro:continuite}.)

\section{Applications en combinatoire et th\'eorie des repr\'esentations}
\label{sec:applications}

\subsection{Positivit\'e des polyn\^omes de Kazhdan--Lusztig}

Soit $(W,S)$ un syst\`eme de Coxeter.
La premi\`ere application des resultats de~\cite{ew1} est l'\'enonc\'e suivant, qui avait \'et\'e conjectur\'e par Kazhdan--Lusztig dans~\cite[Commentaire pr\'ec\'edant la D\'efinition~1.2]{kl}. (Le fait que la Conjecture~\ref{conj:soergel} implique ces \'enonc\'es avait d\'ej\`a \'et\'e remarqu\'e dans~\cite{soergel-bim}.)

\begin{theo}
\begin{enumerate}
\item
\label{it:positivite-1}
Pour tous $w,y \in W$, on a $h_{y,w} \in \Z_{\geq 0}[\vv]$.
\item
\label{it:positivite-2}
Pour tous $x,y,z \in W$, on a $\mu_{x,y}^z \in \Z_{\geq 0}[\vv,\vv^{-1}]$.
\end{enumerate}
\end{theo}

\begin{proof}
D'apr\`es la Proposition~\ref{prop:exemples-ref-fidele} il existe une repr\'esentation $V$ de $W$ qui v\'erifie les conditions de la Partie~\ref{sec:global}. Alors~\eqref{it:positivite-1} d\'ecoule de la Conjecture~\ref{conj:soergel} et de la d\'efinition de $\mathrm{ch}_\Delta$. Le point~\eqref{it:positivite-2} d\'ecoule \'egalement directement de la Conjecture~\ref{conj:soergel} et du fait que l'isomorphisme du Th\'eor\`eme~\ref{theo:categorification} respecte la structure d'alg\`ebre.
\end{proof}

\subsection{Preuve alg\'ebrique de la conjecture de Kazhdan--Lusztig}
\label{ss:preuve-KL}

Pour toutes les notions consid\'er\'ees dans cette sous-partie, on renvoie \`a~\cite{humphreys}.
Soit $\frg$ une alg\`ebre de Lie semisimple complexe, soit $\frb \subset \frg$ une sous-alg\`ebre de Borel, et soit $\frh \subset \frb$ une sous-alg\`ebre de Cartan. Soit $W$ le groupe de Weyl associ\'e, et $S \subset W$ le sous-ensemble des r\'eflexions simples, de sorte que $(W,S)$ est un syst\`eme de Coxeter (voir le~\S\ref{ss:def-Coxeter}). Si $\frh_\R \subset \frh$ est le sous-$\R$-espace vectoriel engendr\'e par les coracines, alors la conjecture de Soergel est connue pour l'action de $W$ sur $\frh_\R$ par le Th\'eor\`eme~\ref{theo:ew1}. On peut facilement en d\'eduire cette conjecture pour l'action de $W$ sur $\frh$, voir par exemple~\cite[Bemerkung~6.15]{soergel-bim}. On notera $w_0 \in W$ l'unique \'el\'ement de plus grande longueur.

Consid\'erons la cat\'egorie $\cO$ de Bernstein--Gelfand--Gelfand associ\'ee, et notons $\cO_0$ son bloc de $0$, c'est-\`a-dire la sous-cat\'egorie pleine constitu\'ee des modules sur lesquels l'annulateur du module trivial $\C$ dans le centre de $\mathcal{U}(\frg)$ agit de fa{\c c}on nilpotente.

Notons $\rho$ la demi-somme des racines positives de $\frg$ d\'etermin\'ees par $\frb$ et, pour tout $w \in W$, par $\mathsf{M}_w$ le module de Verma de plus haut poids $w^{-1}(\rho)-\rho$ et par $\mathsf{L}_w$ son unique quotient simple. Ces objets appartiennent \`a $\cO_0$, et $\{\mathsf{L}_w : w \in W\}$ est un syst\`eme de repr\'esentants des classes d'isomorphisme d'objets simples dans $\cO_0$. Il est bien connu que la cat\'egorie $\cO_0$ a assez d'objets projectifs, et on notera $P$ la couverture projective de $\mathsf{L}_{w_0}=\mathsf{M}_{w_0}$.

La deuxi\`eme application principale des r\'esultats de~\cite{ew1} est une preuve alg\'ebrique du r\'esultat suivant, qui avait \'et\'e conjectur\'e par Kazhdan--Lusztig dans~\cite[Conjecture~1.5]{kl}\footnote{La normalisation des modules $\mathsf{M}_w$ et $\mathsf{L}_w$ dans~\cite{kl} est diff\'erente de celle adopt\'ee ici. Pour comparer le Th\'eor\`eme~\ref{theo:KL} \`a~\cite[Conjecture~1.5]{kl} il faut remarquer que $h_{y,w} = h_{y^{-1}, w^{-1}} = h_{w_0 y w_0, w_0 w w_0}$ pour tous $y,w \in W$.} et d\'emontr\'e en utilisant des m\'ethodes g\'eom\'etriques par Brylinsky--Kashiwara~\cite{bk} et Be{\u\i}linson--Bernstein~\cite{bb} ind\'ependamment.

\begin{theo}
\label{theo:KL}
Pour tous $y,w \in W$, la multiplicit\'e de $\mathsf{L}_w$ comme facteur de composition de $\mathsf{M}_y$ est $h_{y,w}(1)$.
\end{theo}

Dans le reste de ce paragraphe, on donne un aper{\c c}u de cette preuve (inspir\'e de la partie ``alg\'ebrique'' de~\cite{soergel-kat}, mais l\'eg\`erement diff\'erente\footnote{Cette simplification de la preuve est rendue possible par les r\'esultats de~\cite{soergel-bim}, qui n'\'etaient pas disponibles au moment o\`u~\cite{soergel-kat} a \'et\'e publi\'e.}). Notons $C$ le quotient de $\mathrm{Sym}(\frh)$ par l'id\'eal engendr\'e par les \'el\'ements homog\`enes $W$-invariants de degr\'e strictement positif\footnote{Dans~\cite{soergel-kat}, Soergel appelle cette l'alg\`ebre \emph{l'alg\`ebre coinvariante} de $W$. Comme il l'a remarqu\'e plus tard, bien qu'il se soit impos\'e dans la litt\'erature, ce nom ne semble pas opportun dans la mesure o\`u il ne respecte pas l'usage habituel du mot ``coinvariant''.}. La premi\`ere \'etape consiste \`a construire un isomorphisme d'alg\`ebres canonique
\begin{equation}
\label{eqn:C-EndP}
C \xrightarrow{\sim} \Hom_{\cO_0}(P,P) \ ;
\end{equation}
voir~\cite[Endomorphismensatz~7]{soergel-kat} pour la preuve originale de cet \'enonc\'e (due \`a Soergel et bas\'ee sur des id\'ees de ``d\'eformation'' de la cat\'egorie $\cO$) et~\cite[\S 4]{bernstein} pour une preuve plus simple due \`a Bernstein. Une fois ce fait \'etabli, on peut consid\'erer le foncteur
\[
\mathbb{V} : \cO_0 \to C-\mathrm{Mod}, \quad M \mapsto \Hom_{\cO_0}(P,M).
\]

\begin{lemm}
Pour tous $M,N \in \cO_0$ projectifs, le foncteur $\mathbb{V}$ induit une injection
\[
\Hom_{\cO_0}(M,N) \hookrightarrow \Hom_C(\mathbb{V}(M), \mathbb{V}(N)).
\]
\end{lemm}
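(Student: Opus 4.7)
Le plan est de combiner deux observations. Premi\`erement, $\mathbb{V} = \Hom_{\cO_0}(P,-)$ est exact puisque $P$ est projectif. Deuxi\`emement, la t\^ete de $P$ \'etant le simple $\mathsf{L}_{w_0}$, on a $\Hom_{\cO_0}(P, \mathsf{L}_y) = 0$ pour $y \neq w_0$ ; par r\'ecurrence sur la longueur et par exactitude de $\mathbb{V}$, il en r\'esulte
\[
\dim_\C \mathbb{V}(X) = [X : \mathsf{L}_{w_0}] \quad \text{pour tout $X \in \cO_0$,}
\]
et en particulier $\mathbb{V}(X)=0$ si et seulement si $\mathsf{L}_{w_0}$ n'est pas facteur de composition de $X$.

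Soit maintenant $f : M \to N$ un morphisme entre objets projectifs avec $\mathbb{V}(f)=0$. Par exactitude, $\mathbb{V}(\mathrm{Im}(f)) = \mathrm{Im}(\mathbb{V}(f)) = 0$, donc $\mathsf{L}_{w_0}$ n'est pas facteur de composition du sous-module $\mathrm{Im}(f) \subseteq N$. La strat\'egie consiste \`a conclure $\mathrm{Im}(f)=0$ en \'etablissant l'\'enonc\'e cl\'e suivant : \emph{le socle de tout projectif ind\'ecomposable $P_w$ de $\cO_0$ est isotypique de type $\mathsf{L}_{w_0}$}. En d\'ecomposant $N$ en somme d'ind\'ecomposables, son socle est alors \'egalement $\mathsf{L}_{w_0}$-isotypique ; tout sous-module non nul de $N$ \'etant de longueur finie, il admet un socle non nul contenu dans celui de $N$, et poss\`ede donc $\mathsf{L}_{w_0}$ parmi ses facteurs de composition, ce qui est incompatible avec la propri\'et\'e obtenue pour $\mathrm{Im}(f)$.

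Pour d\'emontrer l'\'enonc\'e sur le socle, on consid\`ere une filtration de Verma $0 = F_0 \subset \cdots \subset F_n = P_w$, avec $F_i/F_{i-1} \cong \mathsf{M}_{z_i}$. \'Etant donn\'e un sous-module simple $L \subseteq P_w$, on choisit $i$ minimal tel que $L \subseteq F_i$ ; alors $L \cap F_{i-1}$ est un sous-module propre du simple $L$, donc nul, et la composition $L \hookrightarrow F_i \twoheadrightarrow \mathsf{M}_{z_i}$ est injective. Le fait classique que le socle de tout module de Verma de $\cO_0$ vaut $\mathsf{L}_{w_0}$ (voir~\cite{humphreys}) force alors $L \cong \mathsf{L}_{w_0}$. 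L'\'etape potentiellement la plus d\'elicate de la preuve est donc ici ramen\'ee au cas bien connu des modules de Verma. On observera enfin que l'hypoth\`ese de projectivit\'e sur $M$ n'intervient nulle part dans cet argument, qui montre en fait l'injection voulue pour tout $M \in \cO_0$ d\`es que $N$ est projectif.
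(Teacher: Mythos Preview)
Your proof is correct and follows essentially the same approach as the paper's: both reduce to the fact that the socle of a projective in $\cO_0$ is $\mathsf{L}_{w_0}$-isotypic, so that any nonzero submodule of $N$ has $\mathsf{L}_{w_0}$ as a composition factor and thus is not killed by $\mathbb{V}$. The only differences are that the paper simply cites~\cite[Corollary~4.8]{humphreys} for the socle statement where you supply a direct argument via a Verma filtration, and that you make explicit the observation (implicit in the paper's argument) that the projectivity of $M$ is not used.
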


\begin{proof}
Puisque $N$ est projectif, 
son socle
est une somme directe de copies de $\mathsf{L}_{w_0}$ (voir~\cite[Corollary~4.8]{humphreys}). En particulier, tout sous-module non nul de $N$ admet $\mathsf{L}_{w_0}$ comme facteur de composition, et n'est donc pas annul\'e par $\mathbb{V}$. Il s'ensuit que si $f : M \to N$ est non nul alors $\mathbb{V}(f)$ est non nul, ce qui prouve l'injectivit\'e voulue.
\end{proof}

On introduit ensuite, pour tout $s \in S$, le foncteur de ``croisement de mur''
\[
\vartheta_s : \cO_0 \to \cO_0
\]
donn\'e par une compos\'ee $T_\mu^0 T_0^\mu$ o\`u $\mu$ est un caract\`ere entier de $\frh$ qui appartient \`a l'hyperplan orthogonal \`a la racine associ\'ee \`a $s$ et passant par $-\rho$ (voir par exemple~\cite[\S 7.15]{humphreys}).
 On peut alors construire un isomorphisme de foncteurs
\[
\mathbb{V} \circ \vartheta_s \cong R \otimes_{R^s} \mathbb{V}
\]
en consid\'erant un analogue ``singulier'' de l'isomorphisme~\eqref{eqn:C-EndP} (voir par exemple~\cite[p.~422]{bernstein}) et sa compatibilit\'e avec~\eqref{eqn:C-EndP} via le foncteur de translation (voir par exemple~\cite[Theorem~8]{soergel}) puis des arguments g\'en\'eraux, voir~\cite[Theorem~10]{soergel}. Puisque les objets projectifs de $\cO_0$ se d\'eduisent de $\mathsf{M}_{e}$ (dont l'image par $\mathbb{V}$ est le module trivial $\C$) en appliquant les foncteurs $\vartheta_s$ et en prenant des facteurs directs, on en d\'eduit que la restriction de $\mathbb{V}$ aux objets projectifs prend ses valeurs dans les modules de Soergel (voir le~\S\ref{ss:modules-Soergel}), puis en comparant les dimensions d'espaces de morphismes (voir notamment~\eqref{eqn:Hom-fomula} et la Proposition~\ref{prop:modules-Soergel}) que cette restriction est pleinement fid\`ele et envoie la couverture projective $\mathsf{P}_w$ de $\mathsf{L}_w$ sur $\B_w \otimes_R \C$.

Si on note $\mathrm{Proj}(\cO_0)$ la cat\'egorie des objets projectifs dans $\cO_0$ et $\scB_0$ la cat\'egorie des modules de Soergel (non gradu\'es) on obtient alors un diagramme commutatif d'isomorphismes
\[
\xymatrix@C=2cm{
[\mathrm{Proj}(\cO_0)] \ar[rr]_-{\sim}^-{[\mathbb{V}]} \ar[rd]_{[M] \mapsto \sum_{w} (M : \mathsf{M}_w) \cdot w \hspace{0.5cm}}^-{\sim} && [\scB_0] \ar[ld]^{\sim} \\
& \Z[W] &
}
\]
o\`u la fl\`eche de droite est induite par l'isomorphisme du Th\'eor\`eme~\ref{theo:categorification}. (Dans la fl\`eche de gauche, on note $(M : \mathsf{M}_w)$ la multiplicit\'e de $\mathsf{M}_w$ dans une filtration standard de $M$ ; voir~\cite[Theorem~3.10]{humphreys}.) D'apr\`es la Conjecture~\ref{conj:soergel}, la fl\`eche de droite envoie la classe de $\B_w \otimes_R \C$ sur $\sum_y h_{y,w}(1) \cdot y$, ce qui montre que $(\mathsf{P}_w : \mathsf{M}_y) = h_{y,w}(1)$. On conclut en utilisant la formule de r\'eciprocit\'e, voir par exemple~\cite[\S 3.11]{humphreys}.

\section{Variantes : le cas local et le cas relatif}
\label{sec:variantes}

Les r\'esultats pr\'esent\'es en Partie~\ref{sec:global} forment la th\'eorie de Hodge ``globale'' des bimodules de Soergel. Dans~\cite{williamson} et~\cite{ew2}, les auteurs d\'eveloppent des versions ``locale'' et ``relative'' de cette th\'eorie, et en d\'eduisent des applications en combinatoire et en th\'eorie des repr\'esentations.

Dans cette partie on se place sous les m\^emes hypoth\`eses que dans la Partie~\ref{sec:global}.

\subsection{Cas local}

Dans cette sous-partie, on suppose de plus qu'il existe un \'el\'ement $\rho^\vee \in V$ tel que pour $w \in W$ et $s \in S$ on a
\[
\langle w(\rho^\vee), \alpha_s \rangle >0 \quad \Leftrightarrow \quad sw>w.
\]
(Cette hypoth\`ese est satisfaite dans les deux exemples de la Proposition~\ref{prop:exemples-ref-fidele}.)
Pour $w \in W$ et $B$ dans $\scB$, on note $\Gamma^w(B) := B \otimes_{R \otimes_\R R} \cO(\mathrm{Gr}(w))$. On note $\Gamma_w(-)$ pour $\Gamma_{\{w\}}(-)$, et on consid\`ere la compos\'ee
\[
i_w^B : \Gamma_w(B) \hookrightarrow B \twoheadrightarrow \Gamma^w(B),
\]
qu'on verra comme un morphisme de $R$-modules gradu\'es \`a gauche.
Il n'est pas tr\`es difficile de voir que si $Q$ est la localisation de $R$ en tous les \'el\'ements de la forme $y(\alpha_s)$ pour $s \in S$ et $y \in W$, alors $Q \otimes_R i_w^B$ est un isomorphisme. En particulier, si on fixe une ind\'etermin\'ee $z$ (de degr\'e $2$) et si on consid\`ere $\R[z]$ comme un $R$-module gradu\'e via $\xi \mapsto \langle \xi, \rho^\vee \rangle \cdot z$ pour $\xi \in V^*$, alors l'application
\[
\R[z] \otimes_R i_w^B : \R[z] \otimes_R \Gamma_w(B) \to \R[z] \otimes_R \Gamma^w(B)
\]
est injective. On note
\[
H_{x,y} := \mathrm{coker}(\R[z] \otimes_R i_x^{\B_y})(-1).
\]
Le premier r\'esultat important de~\cite{williamson} (conjectur\'e par Soergel, voir~\cite[Bemerkung~7.2]{soergel-bim}, et motiv\'e par ``l'exemple fondamental'' de Bernstein--Lunts~\cite[Chapter~14]{bl}) est que ces modules v\'erifient une version du th\'eor\`eme de Leftschetz difficile, comme suit.

\begin{theo}
\label{theo:williamson}
Pour tout $i \geq 0$, la multiplication par $z^i$ induit un isomorphisme $(H_{x,y})^{-i} \simto (H_{x,y})^i$.
\end{theo}

L'int\'er\^et principal de ce r\'esultat est qu'il permet de donner une preuve alg\'ebrique de la conjecture de Jantzen concernant la ``filtration de Jantzen'' des modules de Verma. (Voir~\cite[\S 8.12]{humphreys} pour une discussion de cette conjecture, et~\cite[\S\S1.4--1.5]{williamson} pour une discussion du lien entre le Th\'eor\`eme~\ref{theo:williamson} et cette conjecture, qui repose sur des travaux de Soergel~\cite{soergel-andersen} et K\"ubel~\cite{kubel1,kubel2}.)

La preuve du Th\'eor\`eme~\ref{theo:williamson} repose sur le m\^eme genre d'arguments que celle de~\cite{ew1}. Elle d\'emontre simultan\'ement une version des relations de Hodge--Riemann, qu'on explique maintenant. Les m\^emes consid\'erations que dans la Partie~\ref{sec:global} montrent qu'il existe une unique forme $\R$-bilin\'eaire gradu\'ee sym\'etrique non d\'eg\'en\'er\'ee $\langle -,- \rangle'_{\B_y} : \B_y \times \B_y \to R$ qui v\'erifie $\langle ra,b \rangle'_{\B_y} = \langle a,rb \rangle'_{\B_y} = r\langle a,b \rangle'_{\B_y}$ et $\langle ar,b \rangle'_{\B_y}=\langle a,br \rangle'_{\B_y}$ pour tous $a,b \in \B_y$ et $r \in R$. En restreignant cette forme \`a $\Gamma_x(\B_y)$, puis en tensorisant par $\R[z]$, et enfin en identifiant $\R[z,z^{-1}] \otimes_R \Gamma_x(\B_y)$ \`a $\R[z,z^{-1}] \otimes_R \Gamma^x(\B_y)$ via $\R[z,z^{-1}] \otimes_R i_x^{\B_y}$, on obtient une forme bilin\'eaire
\[
\langle -,- \rangle_y^x : (\R[z] \otimes_R \Gamma^x(\B_y)) \times (\R[z] \otimes_R \Gamma^x(\B_y)) \to \R[z,z^{-1}].
\]
Pour $i \geq 0$, on d\'efinit la \emph{partie primitive} $P^{-i} \subset (\R[z] \otimes_R \Gamma^x(\B_y))^{-i}$ comme l'orthogonal (pour $\langle -,- \rangle_y^x$) du sous-module de $\R[z] \otimes_R \Gamma^x(\B_y)$ engendr\'e par les \'el\'ements de degr\'e $<-i$. La version ``locale'' des relations de Hodge--Riemann d\'emontr\'ee dans~\cite{williamson} est la suivante.

\begin{theo}
Pour tout $i>0$, la forme bilin\'eaire sym\'etrique $(-,-)^i : P^{-i} \times P^{-i} \to \R$ d\'efinie par $(a,b)^i := z^i \langle a,b \rangle_y^x$ est $(-1)^{\ell(x) + \frac{-i-\min}{2}}$-d\'efinie, o\`u $\min$ est le degr\'e minimal en lequel $\R[z] \otimes_R \Gamma^x(\B_y)$ est non nul.
\end{theo}

\subsection{Cas relatif}

Maintenant que la Conjecture~\ref{conj:soergel} est connue, les formules~\eqref{eqn:Hom-BxBy} s'appliquent pour tous $x,y \in W$.
On dira qu'un objet $B$ de $\scB$ est \emph{pervers}\footnote{Cette terminologie est bien s\^ur motiv\'ee par le cas des syst\`emes de Coxeter cristallographiques pr\'esent\'e au~\S\ref{ss:crystallog} : dans ce cas les bimodules pervers correspondent aux faisceaux pervers dans $\widetilde{\scB}$.} s'il est une somme directe d'objets $\B_x$ (sans d\'ecalage). Dans ce cas, comme d\'ej\`a utilis\'e dans la preuve du Lemme~\ref{lemm:morphisme-Lefschetz}, les propri\'et\'es~\eqref{eqn:Hom-BxBy} montrent que si on pose $V_x := \Hom_{\scB}(\B_x,B)$ il existe un isomorphisme \emph{canonique}
\[
\bigoplus_{x \in W} V_x \otimes_\R \B_x \simto B.
\]
De plus, tout objet $B$ de $\scB$ admet une filtration canonique (et fonctorielle)
\[
\cdots \subset \tau_{\leq i} B \subset \tau_{\leq i+1} B \subset \cdots
\]
par des inclusions scind\'ees telles que $\bigl( \tau_{\leq i} B / \tau_{\leq i-1} B \bigr) (i)$ est pervers pour tout $i \in \Z$ (et telle que $\tau_{\leq i} B = 0$ pour $i \ll 0$ et $\tau_{\leq i} B=B$ pour $i \gg 0$). On pose alors
\[
H^i(B) := \bigl( \tau_{\leq i} B/\tau_{\leq i-1} B \bigr)(i), \qquad H^i_z(B) := \Hom_{\scB}(\B_z, H^i(B)).
\]

Tout morphisme $f : B \to B'(j)$ induit des morphismes $H^i(f) : H^i(B) \to H^{i+j}(B')$, qui sont ``encod\'es'' par des morphismes d'espaces vectoriels $H^i_z(f) : H^i_z(B) \to H^{i+j}_z(B')$. D'autre part, si $B$ est dans $\scB$ et si $\langle -,- \rangle_B$ est une forme invariante sur $B$, les propri\'et\'es~\eqref{eqn:Hom-BxBy} montrent que cette forme induit un accouplement non d\'eg\'en\'er\'e $H^i(B) \times H^{-i}(B) \to R$. Une fois fix\'ees des formes d'intersection sur chaque $\B_x$, cet accouplement est ``encod\'e'' par des accouplements $H^i_z(B) \times H^{-i}_z(B) \to \R$, qu'on notera encore $\langle -,- \rangle_B$.

Le r\'esultat principal de~\cite{ew2} s'\'enonce de la fa{\c c}on suivante.

\begin{theo}
\label{theo:ew2}
Soient $x,y,z \in W$. L'op\'erateur $L : H_z^\bullet(\B_x\B_y) \to H_z^{\bullet+2}(\B_x\B_y)$ induit par le morphisme $\B_x \rho \B_y : \B_x \B_y \to \B_x \B_y(2)$ v\'erifie le th\'eor\`eme de Lefschetz difficile, c'est-\`a-dire que pour tout $i \geq 0$ il induit un isomorphisme
\[
L^{\circ i} : H_z^{-i}(\B_x\B_y) \simto H^i_z(\B_x\B_y).
\]
De plus, ces donn\'ees v\'erifient les relations de Hodge--Riemann au sens o\`u la restriction de la forme bilin\'eaire sym\'etrique sur $H_z^{-i}(\B_x\B_y)$ d\'efinie par $(x,y) \mapsto \langle x, L^i y \rangle_{\B_x\B_y}$ est $(-1)^{\frac{1}{2}(\ell(x)+\ell(y)-\ell(z)-i)}$-d\'efinie. (Ici, la forme $\langle -,- \rangle_{\B_x\B_y}$ est celle obtenue par le Lemme~{\rm \ref{lemm:formes}} \`a partir des formes d'intersection sur $\B_x$ et $\B_y$.)
\end{theo}

La preuve proc\`ede encore par r\'ecurrence (sur $\ell(x)+\ell(y)$, puis sur $\ell(y)$ \`a $\ell(x)+\ell(y)$ fix\'e) en utilisant les morphismes du Lemme~\ref{lemm:morphisme-Lefschetz}. Elle n\'ecessite de consid\'erer \'egalement des op\'erateurs de la forme
\[
a \cdot \B_x \rho \B_s \B_y + b \cdot \B_x \B_s \rho \B_y : \B_x\B_s\B_y \to \B_x\B_s\B_y(2)
\]
o\`u $a,b \in \R_{>0}$, $x,y \in W$ et $s \in S$, dont les auteurs montrent qu'ils v\'erifient encore des versions appropri\'ees du th\'eor\`eme de Leftschetz difficile et des relations de Hodge--Riemann.

La premi\`ere application du Th\'eor\`eme~\ref{theo:ew2} consid\'er\'ee dans~\cite{ew2} est de nature combinatoire.
Par d\'efinition, les coefficients $\mu_{x,y}^z$ consid\'er\'es au~\S\ref{ss:def-Coxeter} v\'erifient
\[
\mu_{x,y}^z = \sum_{i \in \Z} \dim \bigl( H^i_z(\B_x \B_y) \bigr) \cdot \vv^{-i}.
\]
On d\'eduit donc du Th\'eor\`eme~\ref{theo:ew2} le r\'esultat suivant, qui \'etait une conjecture ``de folklore'' dans la combinatoire de Kazhdan--Lusztig appel\'ee ``unimodalit\'e des constantes de structure'' ; voir par exemple~\cite{ducloux}.

\begin{coro}
Si pour tout $m \in \Z_{\geq 1}$ on pose
\[
[m] = \frac{\vv^m - \vv^{-m}}{\vv-\vv^{-1}} = \vv^{-m+1} + \vv^{-m+3} + \cdots + \vv^{m-3} + \vv^{m-1},
\]
alors pour tous $x,y,z \in W$ on a
\[
\mu_{x,y}^z \in \bigoplus_{m \geq 1} \Z_{\geq 0} \cdot [m].
\]
\end{coro}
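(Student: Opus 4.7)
The plan is to combine the explicit formula
\[
\mu_{x,y}^z = \sum_{i \in \Z} \dim \bigl( H^i_z(\B_x \B_y) \bigr) \cdot \vv^{-i}
\]
recalled immediately above the corollary with the hard Lefschetz half of Theorem~\ref{theo:ew2}. So all that must be shown is that the Laurent polynomial on the right lies in $\bigoplus_{m \geq 1} \Z_{\geq 0} \cdot [m]$.

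Applying Theorem~\ref{theo:ew2}, the graded vector space $H^\bullet_z(\B_x \B_y) := \bigoplus_i H^i_z(\B_x\B_y)$ equipped with the degree-$2$ operator $L$ satisfies hard Lefschetz: $L^{\circ i} : H^{-i}_z(\B_x\B_y) \simto H^i_z(\B_x\B_y)$ is an isomorphism for every $i \geq 0$. As recalled in the discussion of Lefschetz data, this is equivalent to a canonical decomposition of $\R[L]$-modules
\[
H^\bullet_z(\B_x\B_y) \;\cong\; \bigoplus_{k \geq 0} \R[L]/L^{k+1} \otimes_{\R} P^{-k},
\]
where $P^{-k} := \{ v \in H^{-k}_z(\B_x\B_y) \mid L^{k+1} v = 0 \}$ is the primitive subspace placed in cohomological degree $-k$, and the factor $\R[L]/L^{k+1}$ is graded so as to be generated in degree $-k$, living in degrees $-k, -k+2, \ldots, k-2, k$ (each one-dimensional).

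Computing the graded dimension (in the variable $\vv^{-i}$, matching the convention in the formula above) of the summand of index $k$ yields
\[
\dim_{\R}(P^{-k}) \cdot \bigl( \vv^{-k} + \vv^{-k+2} + \cdots + \vv^{k-2} + \vv^{k} \bigr) = \dim_{\R}(P^{-k}) \cdot [k+1],
\]
so that after reindexing by $m := k+1$ we obtain
\[
\mu_{x,y}^z = \sum_{m \geq 1} \dim_{\R} \bigl( P^{-(m-1)} \bigr) \cdot [m],
\]
with each coefficient in $\Z_{\geq 0}$, which is exactly the asserted unimodality. No serious obstacle arises: only the hard Lefschetz part of Theorem~\ref{theo:ew2} is invoked (the Hodge--Riemann relations play no role in this particular corollary), and the rest is the routine extraction of graded dimensions from the primitive decomposition of an $\R[L]$-module satisfying hard Lefschetz.
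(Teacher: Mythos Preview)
Your proof is correct and follows exactly the approach indicated in the paper: the paper simply records the formula $\mu_{x,y}^z = \sum_{i} \dim H^i_z(\B_x\B_y)\,\vv^{-i}$ and states that the corollary follows from Th\'eor\`eme~\ref{theo:ew2}, and you have filled in the standard primitive-decomposition argument that makes this deduction explicit. Your observation that only the hard Lefschetz part of the theorem is used is also correct.
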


Une autre application du Th\'eor\`eme~\ref{theo:ew2} est propos\'ee dans~\cite{ew2}. Elle concerne certaines cat\'egories mono\"idales associ\'ees aux cellules bilat\`eres dans $W$ d\'efinies par Lusztig. Le Th\'eor\`eme~\ref{theo:ew2} permet de montrer que ces cat\'egories sont rigides et pivotales ; voir~\cite[\S 5.2]{ew2} pour plus de d\'etails.


\end{document}